\documentclass[a4paper,10pt]{amsproc}
\title{A topological view of algebraic structures in $C_p(X)$}
\usepackage{amsfonts, amsmath, amssymb, graphicx, mathrsfs, geometry, enumitem}

\usepackage[all]{xy}
\usepackage{xcolor}
\usepackage{hyperref}
\hypersetup{
	colorlinks=true,
	linkcolor={red!60!black},
	citecolor={blue!50!black},
	urlcolor={green!50!black}
}

\newdir{ >}{!/6pt/@{ }*:(1,0.2)@_{>}*:(1,-0.2)@^{>}}

\theoremstyle{plain}
\newtheorem{theorem}{Theorem}[section]
\newtheorem{lemma}[theorem]{Lemma}

\theoremstyle{definition}
\newtheorem{definition}[theorem]{Definition}
\theoremstyle{definition}
\newtheorem{definitions}[theorem]{Definitions}
\theoremstyle{definition}

\newtheorem{remark}[theorem]{Remark}
\newtheorem{counter example}[theorem]{Counter Example}

\newtheorem{notations}[theorem]{Notations}
\newtheorem{corollary}[theorem]{Corollary}

\newtheorem{question}[theorem]{Question}
\numberwithin{equation}{section}

\author[P. Nandi]{Pratip Nandi}	\address{Centro de Ciencias Matemáticas, Universidad Nacional Autónoma de México, Antigua Carretera a Pátzcuaro 8701, Col. Ex-Hacienda de San José de la Huerta, C.P. 58089 Morelia, Michoacán. México}	\email{pratipnandi10@gmail.com}

\author[S. Dey]{Soumajit Dey}	\address{Department of Pure Mathematics, University of Calcutta, 35, Ballygunge Circular Road, Kolkata 700019, West Bengal, India}	\email{deysoumajit8@gmail.com}

\author[A. Dey]{Amrita Dey}	\address{Department of Pure Mathematics, University of Calcutta, 35, Ballygunge Circular Road, Kolkata 700019, West Bengal, India}	\email{deyamrita0123@gmail.com}

\author[S. K. Acharyya]{Sudip Kumar Acharyya}	\address{Department of Pure Mathematics, University of Calcutta, 35, Ballygunge Circular Road, Kolkata 700019, West Bengal, India}	\email{sdpacharyya@gmail.com}
\keywords{pseudo-rank function, purely atomic measure, connectedness, compact sets}

\subjclass[2020]{Primary ; Secondary }

\begin{document}
	
	\title{A topological view of algebraic structures in $C_p(X)$}

	%

	\thanks {The author is immensely grateful for the award of research fellowship provided by the University Grants Commission, New Delhi (NTA Ref. No. 221610014636).}

	\large
	\begin{abstract}
		This manuscript focuses on the topological behaviour of algebraic structures in $C_p(X)$. Closure of ideals in $C_p(X)$ is determined and are used to characterise various topological properties of the underlying Tychonoff space $X$. These include compact space, locally compact space, locally pseudocompact space, $P$-space, almost $P$-space and normal space. Moreover, it has been established that a space $X$ is totally separated space if and only if the set of all units in $C(X)$ is dense in $C_p(X)$. A subring of $C(X)$ is found to be dense in $C_p(X)$ when and only when it separates points. 
	\end{abstract}	
	
	\maketitle
	
	\section{Introduction}
	
	Vladimir V. Tkachuk wrote in his book `A $C_p$-Theory Problem Book - Topological and Function Spaces' \cite{Tkachuk} that ``The credit for the
	creation of $C_p$-theory must undoubtedly be given to Alexander Vladimirovich	Arhangel’skii." The \textit{topology of pointwise convergence} (or, the \textit{point-open topology}) is defined on the ring $C(X)$ of all real-valued continuous functions as follows: 
	
	
	\begin{definition}\cite[Problem 058]{Tkachuk}
		Let $X$ be a Tychonoff space. Also suppose $f\in C(X)$, $x_1,x_2,\cdots,x_n\in X$, where $n\in \mathbb{N}$ and $\epsilon$ is a positive real number. Then define $$B(f,x_1,x_2,\cdots,x_n,\epsilon):= \{g\in C(X)\colon |g(x_i)-f(x_i)|<\epsilon,\forall\;i\in \{1,2,\cdots,n\} \}.$$ The family $\{B(f,x_1,x_2,\cdots,x_n,\epsilon)\colon f\in C(X),\; x_1,x_2,\cdots,x_n\in X, \text{ where }n\in \mathbb{N}\;and\;\epsilon>0  \}$ forms a base for the topology of pointwise convergence on $C(X)$.
	\end{definition} 
	
	Although, the point-open topology on $C(X)$ has various equivalent definitions, the aforementioned definition is most widely used and shall be used throughout this article as well.
	
	We note that the assumption `$X$ is a Tychonoff space' is made in light of the M. H. Stone's Theorem \cite[Theorem 3.9]{GJ1976}. Furthermore, if $X$ is a singleton set, then the space $C_p(X)$ is precisely the real-line. So, it is assumed that $|X|\geq 2$. This shall be prevalent throughout this manuscript. The fact that $C_p(X)$ is a topological ring can be verified using routine arguments. This implies that the closure of ideals in $C_p(X)$ shall also be an ideal. We must clarify at this point that, whenever we use the term `ideal', we mean a proper ideal in $C(X)$. So, the closure of a maximal ideal shall either be closed or dense in $C_p(X)$. 
	
	Section 2 is devoted to the determination of closures of various ideals in $C_p(X)$. For a subset $A$ of $C(X)$, the closure of $A$ in $C_p(X)$ is denoted by $cl_p(A)$. Note that a maximal ideal in $C_p(X)$ is either closed or dense. It has been verified that a maximal ideal in $C_p(X)$ is closed if and only if it is fixed and is dense if and only if it is a free ideal \cite[Section 4.1]{GJ1976}. These revelations led to a characterisation of compact topological spaces using the space $C_p(X)$. Following this, the closure of $O^p$ in $C_p(X)$ is determined to be $M^p$, whenever $p\in X$ and $C(X)$, when $p\in \beta X\setminus X$. Hereafter, recall that a space $X$ is said to be \textit{pseudocompact} if all real-valued continuous functions on $X$ are bounded. The closure of $C_K(X)$ (resp.\ $C_\psi(X)$) in $C_p(X)$ is determined and is associated with the set containing all such points in $X$ which have a compact (resp.\ pseudocompact) neighbourhood. Using these, local compactness and local pseudocompactness of a topological space $X$ have been characterised in view of $C_p(X)$. In this context, we recall that a space $X$ is said to be \textit{locally pseudocompact} if each point in $X$ has a neighbourhood, on which every function in $C(X)$ is bounded. These results display a connection between the closed ideals in $C_p(X)$ and the fixed ideals in $C(X)$. This helped us observe that the closure of an ideal $I$ in $C_p(X)$ is given by \[cl_p(I)=\{f\in C(X)\colon \bigcap Z[I]\subseteq Z(f) \}. \] As a consequence, it is noted that $cl_p(O_p)=M_p$, for each $p\in X$ and this led to a characterisation of $P$-spaces (a space $X$ is said to be a \textit{$P$-space} if every $G_\delta$-set in $X$ is open \cite[Exercise 4J]{GJ1976}). Building from these discussions, the closure of prime ideals in $C_p(X)$ have been determined. Also, we recall that for $p\in X$, $O_p$ is a \textit{$z^\circ$-ideal}, i.e., $int\;Z(f)=int\;Z(g)$ with $f\in C(X)$ and $g\in O_p$ implies that $f\in O_p$ \cite{Levy}.  So, as a natural continuation, $z^\circ$-ideals are next studied. In this context, recall that a space is called \textit{almost $P$-space} if every non-empty $G_\delta$-set in $X$ has non-empty interior. Using the above mentioned discussions, almost $P$-spaces are characterised as spaces for which the closed ideals in $C_p(X)$ are $z^\circ$-ideals. An ideal $I$ in $C(X)$ is said to be an \textit{essential ideal} if $int\; \bigcap Z[I]=\emptyset$ \cite{A1995}.  Clearly, a free ideal in $C(X)$ is always essential, and hence is dense. It is realised that essential ideals in $C_p(X)$ are dense if and only if $X$ is discrete. Following this, we have established few conditions equivalent to each ideal being closed in $C_p(X)$. Next, it is established that the closure of any ideal $I$ in $C_p(X)$ can be written as the intersection of the closure of all maximal ideals containing it. The final discussion of this section is on the closure of sum of ideals in $C_p(X)$. Normal spaces have been characterised with the help of these discussions.
	
	In Section 3, we shift our focus from ideals in $C(X)$ to its subrings and various other class of algebraic structures. We begin with the collection of all divisors of zero in $C(X)$ along with $\boldsymbol{0}$, $D(X)$. We realise that this collection is either dense or closed in $C_p(X)$. Moreover, we characterise spaces for which $D(X)$ is dense (resp.\ closed) as spaces with infinite (resp.\ finite) cardinality. After this, we focus on the collection, $U(X)$ of all units in $C(X)$. We first realise that $U(X)$ is dense in $C_p(X)$ if and only if $X$ is a \textit{totally separated space}, i.e. each quasicomponent in $X$ is a singleton set. In this context, we note that if a space $X$ is either zero-dimensional or extremally disconnected, then $X$ is totally separated. Subsequently, an explicit description of the closure of $U(X)$ in $C_p(X)$ is provided. It has been realised that closure of the set $V(X)$ consisting of all Von-Neumann regular elements in $C(X)$ is the same as that of $U(X)$, for any space $X$. The rest of the article is dedicated to subrings of $C(X)$, in view of the topology of pointwise convergence. The focus is shifted to subrings of the form $C_\infty^\mathscr{P}(X)$ from \cite{ARN2022}. It is established that for each ideal $\mathscr{P}$ of closed sets in $X$, the closure of the subring $C_\infty^\mathscr{P}(X)$ is the same as that of $C_\mathscr{P}(X)$, which has been discussed in Section 2. This again leads to a characterisation of locally $\mathscr{P}$ (and hence, locally compact and locally pseudocompact) spaces \cite{ARN2022} using the subring $C_\infty^\mathscr{P}(X)$. Following this, we establish that a subring $S$ of $C(X)$ which contains all the constant functions is dense in $C_p(X)$ if and only if it separates points in $X$. 
	It follows from this that the ring $C^*(X)$ of all bounded functions in $C(X)$ is dense in $C_p(X)$. We end this section with the observation that any proper subgroup of $C(X)$ has empty interior in the point-open topology. These facts lead to a characterisation of pseudocompact spaces.

	\section{Closure of ideals in $C_p(X)$}
	
	We begin our discussions with the very obvious choice of maximal ideals of the ring $C(X)$. Since $C_p(X)$ is a topological ring, each maximal ideal in $C(X)$ is either closed or dense in the space $C_p(X)$. Now, for $p\in X$, we know that the set $M_p=\{f\in C(X)\colon f(p)=0 \}$ forms a fixed maximal ideal in $C(X)$. Moreover, any fixed maximal ideal in $C(X)$ is of the form $M_p$ for some $p\in X$. We first observe that each such ideal is closed in $C_p(X)$.
	
	\begin{theorem} \label{tClosureM_p}
		Each fixed maximal ideal in $C_p(X)$ is closed.
	\end{theorem}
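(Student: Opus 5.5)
The plan is to show that the complement of $M_p$ is open in $C_p(X)$, using the basic open sets $B(f,x_1,\dots,x_n,\epsilon)$ from the definition of the point-open topology. Recall that every fixed maximal ideal has the form $M_p=\{f\in C(X)\colon f(p)=0\}$ for some $p\in X$, so it is enough to treat an arbitrary such $M_p$.

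Equivalently, one may observe that the evaluation map $e_p\colon C_p(X)\to\mathbb{R}$, $e_p(f)=f(p)$, is continuous. Indeed, the preimage of the interval $(f(p)-\epsilon,f(p)+\epsilon)$ contains the basic neighbourhood $B(f,p,\epsilon)$, and in fact equals it. Then $M_p=e_p^{-1}(\{0\})$ is the preimage of a closed set, hence closed.

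To keep the argument self-contained, I would phrase it directly. Take $f\notin M_p$, so that $f(p)\neq 0$, and set $\epsilon=|f(p)|/2>0$. For any $g\in B(f,p,\epsilon)$ we have $|g(p)-f(p)|<\epsilon$, which gives $|g(p)|>|f(p)|-\epsilon=\epsilon>0$. Hence $g\notin M_p$, so $B(f,p,\epsilon)\subseteq C(X)\setminus M_p$. This shows the complement is open, and therefore $M_p$ is closed.

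There is no genuine obstacle here. The only point needing care is the identification of fixed maximal ideals with the sets $M_p$, which was recalled just before the statement.
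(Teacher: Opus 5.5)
Your proof is correct and is essentially the paper's argument: the paper also uses the basic neighbourhood $B(f,p,\frac{|f(p)|}{2})$ of a function $f$ with $f(p)\neq 0$. The only difference is that the paper phrases it by contradiction, supposing $f\in cl_p(M_p)\setminus M_p$, while you show directly that this neighbourhood misses $M_p$, which is the same reasoning.
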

	
	\begin{proof}
		Fix $p\in X$ and if possible, let there exist $f\in cl_p(M_p)\setminus M_p$. Then $f(p)\neq 0$ and there exists $g\in M_p\cap B_p(f,p,\frac{|f(p)|}{2})$. It follows that $|f(p)|<\frac{|f(p)|}{2}$, which is impossible. 
	\end{proof}
	
	Two very natural questions arise from the above theorem: \begin{enumerate}[label=\arabic*.]
		\item Are all fixed ideals in $C(X)$ closed in $C_p(X)$?
		\item Are the free maximal ideals in $C(X)$ closed or dense in $C_p(X)$?
	\end{enumerate}
	
	In reference to the second question, we recall that a free maximal ideal in $C(X)$ is of the form $M^p=\{f\in C(X)\colon p\in cl_{\beta X}Z(f) \}$, where $p\in \beta X \setminus X$. In this context, we have the following result.
	
	\begin{theorem} \label{tClosureM^p}
		Each free maximal ideal in $C_p(X)$ is dense.
	\end{theorem}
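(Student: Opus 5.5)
Fix $p\in\beta X\setminus X$ and write $M^p=\{f\in C(X)\colon p\in cl_{\beta X}Z(f)\}$. I will show directly that every basic open set $B(f,x_1,\dots,x_n,\epsilon)$ meets $M^p$. Since $C_p(X)$ is a topological ring, $cl_p(M^p)$ is an ideal containing the maximal ideal $M^p$. So it would also suffice to show that $cl_p(M^p)$ contains a unit, or simply that $M^p$ is not closed. The direct route avoids this, so I will use it.

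\textbf{Step 1.} Because $M^p$ is free, for each $x_i$ there is $g_i\in M^p$ with $g_i(x_i)\neq 0$. Put $g=\sum_{i=1}^n g_i^2\in M^p$. Then $g(x_i)>0$ for every $i$, so $Z(g)\cap\{x_1,\dots,x_n\}=\emptyset$.

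\textbf{Step 2.} Since $X$ is Tychonoff, the finite set $F=\{x_1,\dots,x_n\}$ and the zero set $Z(g)$ are disjoint, with $F$ compact and $Z(g)$ closed. Hence there is $h\in C(X)$ with $h\equiv 1$ on a neighbourhood of $F$ and $h\equiv 0$ on a neighbourhood of $Z(g)$. Concretely, choose a small $\delta>0$ with $\delta<\min_i g(x_i)$ and set
\[
h=\frac{\bigl(g-\delta\bigr)\vee 0}{\bigl(g-\delta\bigr)\vee 0+\bigl(2\delta-g\bigr)\vee 0}\wedge 1,
\]
or more simply a piecewise-linear function of $g$ that vanishes on $\{g\le\delta\}$ and equals $1$ on $\{g\ge \min_i g(x_i)\}$. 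Then $Z(h)\supseteq\{g\le\delta\}$, which is a neighbourhood of $Z(g)$.

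\textbf{Step 3.} Let $k=fh$. Then $k(x_i)=f(x_i)$ for all $i$, so $k\in B(f,x_1,\dots,x_n,\epsilon)$. Moreover $Z(k)\supseteq Z(h)\supseteq Z(g)$, and $p\in cl_{\beta X}Z(g)$ because $g\in M^p$. Hence $p\in cl_{\beta X}Z(k)$, that is, $k\in M^p$. Since the basic open set was arbitrary, $M^p$ is dense. Equivalently, $cl_p(M^p)$ is an ideal containing a unit, namely $\boldsymbol{1}$, so it equals $C(X)$.

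The only point requiring care is Step 2: we need a function that agrees with $f$ at the finitely many points while vanishing on a zero set of a member of $M^p$. The piecewise-linear composition with $g$ handles this cleanly. One could instead invoke the fact that $M^p$ is a $z$-ideal, so that any $k$ with $Z(k)\supseteq Z(g)$ lies in $M^p$; that is exactly the membership used in Step 3.
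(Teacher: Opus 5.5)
Your proof is correct. It follows a streamlined route that differs from the paper's in how the witness in $M^p$ is built.

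\textbf{What is shared.} Both arguments meet each basic open set $B(f,x_1,\dots,x_n,\epsilon)$ directly, and both rely on the description of $M^p$ as a $z$-ideal.

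\textbf{The paper's route.} It works point by point. For each $x_i$ it uses regularity of $\beta X$ to get $h_i\in C^*(X)$ with $p\in cl_{\beta X}Z(h_i)$ and $x_i\notin cl_{\beta X}Z(h_i)$. It then takes a function $t_i$ with $t_i(x_i)=f(x_i)$ that vanishes on $Z(h_i)$ and at the other points $x_j$, and sums the $t_i$.

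\textbf{Your route.} You take the non-vanishing elements straight from the freeness hypothesis and merge them into a single $g=\sum_i g_i^2\in M^p$ that is positive on all of $\{x_1,\dots,x_n\}$. You then use one cutoff $h$, a function of $g$, and set $k=fh$. So $k$ agrees with $f$ exactly at every $x_i$, with no need to separate the $x_i$ from one another and no appeal to $\beta X$ beyond the definition of $M^p$.

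\textbf{What your route buys.} Your argument shows verbatim that every free $z$-ideal is dense. Replace $h$ by $g/(g\vee\delta)$ with $0<\delta<\min_i g(x_i)$. This function lies in the ideal generated by $g$ and equals $1$ at each $x_i$, so $f\,g/(g\vee\delta)$ lies in the ideal and in the basic open set. The $z$-ideal property is then not needed at all, and you get density of every free ideal directly. The paper only reaches that as a corollary of its later general closure formula.

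\textbf{A small slip.} Your displayed formula for $h$ equals $1$ only where $g\ge 2\delta$, so with that formula you need $\delta\le\frac12\min_i g(x_i)$, not merely $\delta<\min_i g(x_i)$. The piecewise-linear version you describe afterwards is fine with $\delta<\min_i g(x_i)$.
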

	
	\begin{proof}
		Let $p\in \beta X\setminus X$ and consider a basic open set $B(f,x_1,x_2,\cdots,x_n,\epsilon)$ in $C_p(X)$, where $f\in C(X)$, $x_1,x_2,\cdots,x_n\in X$ and $\epsilon$ is a positive real number. For each $i\in \{1,2,\cdots, n \}$, there exists $h_i\in C^*(X)$ with the property that $x_i\in \beta X\setminus cl_{\beta X}Z(h_i)\subseteq \beta X\setminus  \{p\}$. Moreover, for each $i\in \{1,2,\cdots, n \}$, there exists $t_i\in C^*(X)$ such that $t_i(x_i)=f(x_i)$ and $t_i(Z(h_i)\cup \{x_1,x_2,\cdots,x_{i-1},x_{i+1},\cdots,x_n \})=\{0\}$. Finally, define $t=\sum\limits_{i=1}^nt_i$. Then it can be easily verified that $t\in B(f,x_1,x_2,\cdots,x_n,\epsilon)\cap M^p$. Indeed, for each $i\in \{1,2,\cdots, n \}$, $h_i\in M^p$ and $z(h_i)\subseteq Z(t_i)$. Since $M^p$ is a $z$-ideal,  it follows that each $t_i\in M^p$ and consequently, $t\in M^p$. That $t\in B(f,x_1,x_2,\cdots,x_n,\epsilon)$ is immediate.
	\end{proof}
	
	******\begin{corollary}
		The number of dense maximal ideals in $C_p(X)$ is $|\beta X\setminus X|$ and that of the closed maximal ideals in $C_p(X)$ is $|X|$.
	\end{corollary}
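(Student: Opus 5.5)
The plan is to combine the Gelfand--Kolmogoroff classification of maximal ideals of $C(X)$ with Theorems \ref{tClosureM_p} and \ref{tClosureM^p}. The classification \cite[Theorem 7.3]{GJ1976} says that the maximal ideals of $C(X)$ are exactly the ideals $M^p$ for $p\in\beta X$. For $p\in X$ the ideal $M^p$ coincides with the fixed ideal $M_p$, and for $p\in\beta X\setminus X$ it is free. Moreover, the assignment $p\mapsto M^p$ is injective. This is because $\beta X$ is Hausdorff and the zero-set closures $cl_{\beta X}Z(f)$ form a base for closed sets in $\beta X$. Hence for $p\neq q$ in $\beta X$ there is $f\in C(X)$ with $p\in cl_{\beta X}Z(f)$ and $q\notin cl_{\beta X}Z(f)$, so that $f\in M^p\setminus M^q$. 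The maximal ideals of $C(X)$ are therefore in bijection with $\beta X$, with the fixed ones corresponding to $X$ and the free ones to $\beta X\setminus X$.

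Next I would check that "closed" and "dense" are mutually exclusive for a proper ideal. A proper ideal $I$ misses the constant function $\boldsymbol{1}$. If $I$ is closed and dense then $I=cl_p(I)=C(X)$, which is a contradiction.

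With this in hand, Theorem \ref{tClosureM_p} shows that every fixed maximal ideal $M_p$, $p\in X$, is closed, so none of them is dense. Theorem \ref{tClosureM^p} shows that every free maximal ideal $M^p$, $p\in\beta X\setminus X$, is dense, so none of them is closed. Consequently the closed maximal ideals are exactly the fixed ones, and the dense maximal ideals are exactly the free ones.

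Counting through the bijection then gives $|X|$ closed maximal ideals and $|\beta X\setminus X|$ dense maximal ideals. For the count of fixed ideals, injectivity on $X$ can also be seen directly: since $X$ is Tychonoff, for $p\neq q$ in $X$ there is $f\in C(X)$ with $f(p)=0\neq f(q)$, so $M_p\neq M_q$.

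The only step needing care is the injectivity of $p\mapsto M^p$ on $\beta X\setminus X$. I would cite it as part of \cite[Theorem 7.3]{GJ1976} rather than reprove it; the zero-set base argument above suffices if a reproof is wanted.
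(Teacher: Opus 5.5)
Your proposal is correct and follows the argument the paper intends (the paper states the corollary without proof, as immediate from the preceding theorems). You use the Gelfand--Kolmogoroff bijection $p\mapsto M^p$ from $\beta X$ onto the maximal ideals, under which $X$ corresponds to the fixed ideals and $\beta X\setminus X$ to the free ones, together with Theorems \ref{tClosureM_p} and \ref{tClosureM^p}; your added check that a proper ideal cannot be both closed and dense simply makes the count explicit.
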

	
	The next observation, which characterises compactness of the space $X$, follows directly from the above theorems.
	
	\begin{corollary} \label{ccompact}
		A maximal ideal, $M$ is $C(X)$ is closed if and only if it is fixed. Consequently, every maximal ideal in $C_p(X)$ is closed if and only if $X$ is compact.
	\end{corollary}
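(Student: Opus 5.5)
The first assertion follows by combining Theorem \ref{tClosureM_p} and Theorem \ref{tClosureM^p} with the Gelfand--Kolmogoroff description of maximal ideals in $C(X)$. Every maximal ideal $M$ of $C(X)$ is of the form $M^p$ for a unique $p\in\beta X$. If $p\in X$, then $M^p=M_p$ is fixed, and Theorem \ref{tClosureM_p} shows that it is closed in $C_p(X)$.

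If $p\in\beta X\setminus X$, then $M^p$ is free, and Theorem \ref{tClosureM^p} shows that it is dense in $C_p(X)$. It remains to check that a dense proper ideal cannot also be closed. If it were both, then $cl_p(M^p)=M^p=C(X)$, so $M^p$ would contain the constant function $\boldsymbol{1}$ and would not be proper, a contradiction. Since every maximal ideal is either fixed or free, we get: $M$ is closed if and only if $M$ is fixed.

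For the consequence, first suppose $X$ is compact. Then every ideal of $C(X)$ is fixed (equivalently $\beta X=X$), so every maximal ideal is fixed and hence closed by the first part.

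Conversely, suppose every maximal ideal of $C(X)$ is closed in $C_p(X)$. By the first part, every maximal ideal is then fixed. Assume $X$ is not compact. Then $\beta X\setminus X\neq\emptyset$, and for any $p$ in it the ideal $M^p$ is a free maximal ideal. This is a contradiction, so $X$ is compact. Alternatively, one can invoke directly the standard fact that $X$ is compact if and only if every maximal ideal of $C(X)$ is fixed.

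There is no real obstacle here. The only point needing care is the dichotomy "closed or dense but not both" for proper ideals, and this is settled by observing that $C(X)$ contains the unit $\boldsymbol{1}$.
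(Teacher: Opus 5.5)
Your proof is correct and takes the paper's route. The paper obtains this corollary directly from Theorem \ref{tClosureM_p} (fixed maximal ideals are closed) and Theorem \ref{tClosureM^p} (free maximal ideals are dense), together with the standard fact that $X$ is compact exactly when every maximal ideal of $C(X)$ is fixed; your remark that a dense proper ideal cannot be closed, since $\boldsymbol{1}\notin M^p$, makes explicit a step the paper leaves implicit.
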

	
	We note that in proof of Theorem \ref{tClosureM^p}, the regularity of the space $\beta X$ is very subtly used to construct each $h_i$. Using the same idea, we conjecture that the ideals of the form $O^p=\{f\in C(X)\colon cl_{\beta X}Z(f)\;is\;a\;neighbourhood\;of\;p\;in\;\beta X \}$ are also dense in $C_p(X)$, for any $p\in \beta X\setminus X$.
	
	\begin{theorem} \label{tClosureO^p}
		For $p\in \beta X\setminus X$, $O^p$ is dense in $C_p(X)$.
	\end{theorem}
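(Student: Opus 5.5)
We will follow the construction used for Theorem \ref{tClosureM^p}, strengthening it so that the resulting function vanishes on a set whose closure in $\beta X$ contains a whole neighbourhood of $p$. Fix $p\in\beta X\setminus X$ and a basic open set $B(f,x_1,x_2,\cdots,x_n,\epsilon)$ in $C_p(X)$. It is enough to produce $t\in O^p$ with $t(x_i)=f(x_i)$ for every $i$.

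First we separate $p$ from the finite set $F=\{x_1,\cdots,x_n\}\subseteq X$ inside $\beta X$. Since $p\notin F$ and $\beta X$ is compact Hausdorff, hence completely regular, there is $H\in C(\beta X)$ with $H(p)=0$ and $H(x_i)=1$ for all $i$. Let $h=H|_X\in C^*(X)$. Put $U=\{y\in\beta X\colon |H(y)|<\tfrac12\}$, an open neighbourhood of $p$ in $\beta X$, and put $Z=Z(h_0)$ with $h_0=\max(\tfrac12-|h|,0)\cdot$ replaced by the zero set $\{x\in X\colon |h(x)|\le \tfrac12\}$, which is a zero set of $X$ disjoint from $F$.

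Next we choose $t\in C(X)$ with $t(x_i)=f(x_i)$ for all $i$ and $t\equiv 0$ on $Z$. This is possible because $Z$ is a zero set (closed) disjoint from the finite set $F$. For instance, take $g_i\in C(X)$ with $g_i(x_i)=1$ and $g_i$ vanishing on $Z\cup(F\setminus\{x_i\})$, which complete regularity and finiteness of $F$ allow. Then set $t=\sum f(x_i)g_i$, exactly as in the proof of Theorem \ref{tClosureM^p}. In particular $t\in B(f,x_1,\cdots,x_n,\epsilon)$.

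The remaining step, which we expect to be the main point, is to verify that $t\in O^p$, i.e.\ that $cl_{\beta X}Z(t)$ is a neighbourhood of $p$. We have $Z\subseteq Z(t)$, so it suffices to show that $U\subseteq cl_{\beta X}Z$. Take $y\in U$ and any open $W\ni y$ in $\beta X$. Then $W\cap U$ is a nonempty open subset of $\beta X$. Since $X$ is dense in $\beta X$, it meets $X$ at some point $x$, and $|h(x)|=|H(x)|<\tfrac12$ gives $x\in Z\cap W$. Hence $y\in cl_{\beta X}Z$, so $U\subseteq cl_{\beta X}Z\subseteq cl_{\beta X}Z(t)$. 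Thus $t\in O^p\cap B(f,x_1,\cdots,x_n,\epsilon)$, and since the basic open set was arbitrary, $O^p$ is dense in $C_p(X)$.
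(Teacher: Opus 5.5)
Your proof is correct and follows essentially the same route as the paper. In both arguments you separate $p$ from $\{x_1,\dots,x_n\}$ in $\beta X$, obtain a zero set of $X$ missing the $x_i$ whose $\beta X$-closure contains a neighbourhood of $p$, and interpolate $f$ at the $x_i$ by a function vanishing on that zero set. The paper uses one zero set $Z(h_i)$ per point and appeals to $O^p$ being a $z$-ideal. You use a single $Z$ and verify $U\subseteq cl_{\beta X}Z$ directly via density of $X$, which supplies a detail the paper leaves implicit.

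Clean up the garbled definition of $Z$: the set $\{x\in X\colon |h(x)|\le\tfrac12\}$ is $Z\bigl(\max(|h|-\tfrac12,0)\bigr)$, not the zero set of $\max(\tfrac12-|h|,0)$.
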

	
	\begin{proof}
		Let $p$ be a point in $\beta X\setminus X$, and $B(f,x_1,x_2,\cdots,x_n,\epsilon)$, a basic open set in $C_p(X)$, where $f\in C(X)$, $x_1,x_2,\cdots, x_n\in X$ and $\epsilon$ is a positive real number. Now, as $p\notin X$, $p$ has a closed neighbourhood $F$ in $\beta X$, which misses the points $x_1,x_2,\cdots,x_n$. Fix $i\in \{1,2,\cdots,n \}$. Then, there exists $h_i\in C^*(X)$ such that $x_i\in \beta X\setminus cl_{\beta X}Z(h_i)\subseteq \beta X\setminus F$. Furthermore, one can find $t_i\in C^*(X)$ such that $t_i(x_i)=f(x_i)$ and $t_i(Z(h_i)\cup \{x_1,x_2,\cdots,x_{i-1},x_{i+1},\cdots,x_n \})=\{0\}.$ Proceeding as in the proof of Theorem \ref{tClosureM^p}, as $O^p$ is a $z$-ideal in $C(X)$, it follows that $\sum\limits_{i}^nf_i\in B(f,x_1,x_2,\cdots,x_n,\epsilon)\cap O^p$.
	\end{proof}
	
	The above theorem guarantees that $cl_p(O^p)=C(X)$, whenever $p\in \beta X\setminus X$. Naturally, we wonder how the closure of $O^p=O_p$ will behave, when $p\in X$. We will come back to this soon. Before that, we make an observation that $O^p$ is a free ideal in $C(X)$, whenever $p\in \beta X\setminus X$. This ideal is shown to be dense in $C_p(X)$. This raises the whether the same conclusion can be made for any free ideal in $C(X)$. In this context, let us first recall some well-known free ideals of $C(X)$ and discuss their closure in the space $C_p(X)$.
	
	\begin{theorem} \label{tClosureC_K(X)C_psi(X)}
		The following assertions hold for a Tychonoff space $X$.
		
		\begin{enumerate}[label=\arabic*.]
			\item $cl_p(C_K(X))=\{f\in C(X)\colon f(X\setminus X_K)=\{0\} \}$, where $X_K$ denotes the set of all such points $x\in X$, which have a compact neighbourhood in $X$.
			
			\item $cl_p(C_\psi(X))=\{f\in C(X)\colon f(X\setminus X_\psi)=\{0\} \}$, where $X_\psi$ denotes the set of all such points $x\in X$, which have a pseudocompact neighbourhood in $X$. 
			
		\end{enumerate}
	\end{theorem}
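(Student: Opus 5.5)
We prove both assertions by the same two-inclusion argument; we describe it for $C_K(X)$ and indicate the changes for $C_\psi(X)$. Recall that $C_K(X)$ is the ideal of functions with compact support, and $C_\psi(X)$ is the ideal of functions whose support $cl_X(X\setminus Z(f))$ is pseudocompact.

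\emph{Inclusion $\subseteq$.} Let $f\in C_K(X)$ and $x\in X\setminus X_K$. If $f(x)\neq 0$, then $cl_X(X\setminus Z(f))$ is compact. Since $X\setminus Z(f)$ is open and contains $x$, this support is a compact neighbourhood of $x$, contradicting $x\notin X_K$. Hence $f(X\setminus X_K)=\{0\}$.

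The set $\{f\in C(X)\colon f(X\setminus X_K)=\{0\}\}=\bigcap_{x\in X\setminus X_K}M_x$ is an intersection of fixed maximal ideals. Each of these is closed by Theorem \ref{tClosureM_p}, so the intersection is closed and contains $cl_p(C_K(X))$. (If $X_K=X$, the intersection is all of $C(X)$.) The same argument works for $C_\psi$, since the support of $f\in C_\psi(X)$ is a pseudocompact neighbourhood of every point where $f$ does not vanish.

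\emph{Inclusion $\supseteq$.} Let $f$ vanish on $X\setminus X_K$, and take a basic neighbourhood $B(f,x_1,\dots,x_n,\epsilon)$. Discard the points $x_i$ with $f(x_i)=0$, since any $g$ with $g(x_i)=0$ satisfies the condition at those points. The remaining points, say $x_1,\dots,x_m$, all lie in $X_K$; fix compact neighbourhoods $K_i$ of $x_i$.

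For each $i\le m$, complete regularity gives $t_i\in C(X)$ with the following properties:
\begin{itemize}
\item $t_i(x_i)=f(x_i)$;
\item $t_i$ vanishes on $X\setminus int\,K_i$;
\item $t_i$ vanishes on the finite set $\{x_1,\dots,x_n\}\setminus\{x_i\}$.
\end{itemize}
To obtain it, separate $x_i$ from the closed set $(X\setminus int\,K_i)\cup(\{x_1,\dots,x_n\}\setminus\{x_i\})$ and scale by $f(x_i)$. Then $cl_X(X\setminus Z(t_i))\subseteq K_i$ is a closed subset of a compact set, hence compact, so $t_i\in C_K(X)$.

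Put $t=\sum_{i\le m}t_i$. Then $t\in C_K(X)$, since $C_K(X)$ is an ideal. Moreover $t(x_j)=f(x_j)$ for $j\le m$, and $t(x_j)=0=f(x_j)$ for the discarded points. Thus $t$ lies in the basic neighbourhood, and $f\in cl_p(C_K(X))$.

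\emph{The pseudocompact case.} Here the construction is the main obstacle, because a closed subset of a pseudocompact set need not be pseudocompact. The fix is a closure-of-open-set argument. Let $P_i$ be a pseudocompact neighbourhood of $x_i$, and choose an open $V_i$ with $x_i\in V_i\subseteq cl_X V_i\subseteq int\,P_i$. A continuous $g$ unbounded on $cl_X V_i$ is unbounded on $V_i$, which is dense in $cl_X V_i$, and $V_i\subseteq P_i$, so $g$ is unbounded on $P_i$. Hence every $g\in C(X)$ is bounded on $cl_X V_i$. This is what the paper means by ``pseudocompact'' in the definition of $C_\psi(X)$: the support is relatively pseudocompact, i.e.\ every function in $C(X)$ is bounded on it.

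Now take $t_i$ vanishing off $V_i$. Its support lies in $cl_X V_i$, so every function in $C(X)$ is bounded on the support, and $t_i\in C_\psi(X)$. The rest of the argument is unchanged.
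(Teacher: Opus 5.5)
Your proof is correct and follows the paper's route: the target set is closed (you write it as $\bigcap_{x\in X\setminus X_K}M_x$ and use closedness of fixed maximal ideals, where the paper checks pointwise limits directly), and it contains $C_K(X)$ because a support is a compact neighbourhood of every point where the function is non-zero. Density comes from summing functions supported in compact neighbourhoods that interpolate $f$ at the given points. Your single $t_i$ merges the paper's $g_ih_i$, and making it vanish at the discarded zeros of $f$ is cleaner than the paper's ``without loss of generality''.

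In the $C_\psi$ case, where the paper only says ``analogously'', reading ``pseudocompact support'' as ``every $g\in C(X)$ is bounded on it'' is legitimate: it matches the paper's own definition of local pseudocompactness, and for supports the two notions coincide. Under that reading your density step is superfluous, since $cl_X V_i\subseteq P_i$ already suffices. Under the intrinsic reading, your density remark does not do the job, because a $g\in C(cl_X V_i)$ need not extend to $P_i$. What is actually needed there is the fact that the closure of an open subset of a pseudocompact space is pseudocompact, applied inside $P_i$.
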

	
	\begin{proof}
		We shall prove only the first assertion, as the second one can be proved analogously. Suppose $J=\{f\in C(X)\colon f(X\setminus X_K)=\{0\} \}$. Let $f\in J$ and consider an arbitrary basic open neighbourhood, $B(f,x_1,x_2,\cdots,x_n,\epsilon)$ of $f$. If each $x_i\in Z(f)$, then $\boldsymbol{0}\in B(f,x_1,x_2,\cdots,x_n,\epsilon)\cap C_K(X)$. Without loss of generality, let $x_i\in X\setminus Z(f)\subseteq X_K$ for all $i\in \{1,2,\cdots,n \}$. Fix $i\in \{1,2,\cdots,n \}$. Then $x_i$ has a compact neighbourhood $C_i$ in $X$. So, there exists $g_i\in C(X)$ such that $x_i\in X\setminus Z(g_i)\subseteq \overline{X\setminus Z(g_i)}\subseteq C_i$ and $g_i(x_i)=f(x_i)$, as $x_i\notin Z(f)$. Moreover, there exists $h_i\in C^*(X)$ such that $h_i(x_i)=1$ and $h_i(\{x_1,x_2,\cdots,x_{i-1},x_{i+1},\cdots, x_n\})=\{0\}$. Then see that $g_i\in C_K(X)$ and so $g_ih_i\in C_K(X)$, for each $i\in \{1,2,\cdots,n \}$. Then $g=\sum\limits_{i=1}^ng_ih_i\in C_K(X)$ and $g(x_i)=f(x_i)$, for each $i\in \{1,2,\cdots,n \}$. Consequently, $g\in B(f,x_1,x_2,\cdots,x_n,\epsilon)\cap C_K(X)$. 
		
		To show the reverse inclusion, it is sufficient to show that $J$ is closed in $C_p(X)$ and $C_K(X)\subseteq J$. To see the former, choose $f\in cl_pJ$ and fix $x_0\in X\setminus L$ arbitrarily. For an arbitrary $\epsilon>0$, there exists $g_\epsilon\in B(f,x_0,\epsilon)\cap J$. It follows that $g_\epsilon(x_0)=0$, for all $\epsilon>0$. Consequently, $f(x_0)=0$ for any $x_0\in X\setminus L$. Therefore, $f\in J$. Finally, let $f\in C_K(X)$ and  $f(x)\neq 0$. Then $\overline{X\setminus Z(f)}$ forms a compact neighbourhood of $x$ and so, $x\in L$. This ensures that $f\in J$.
	\end{proof}
	
	The proof of the preceding theorem can be adapted to a broader class of ideals in $C(X)$. This is established in the following result and can be proved through analogous reasoning.
	
	\begin{theorem} \label{tClosureC_P}
		For an ideal, $\mathscr{P}$ of closed subsets of $X$, $cl_p(C_\mathscr{P}(X))=\{f\in C(X)\colon f(X\setminus X_\mathscr{P})=\{0\} \}$, where $X_\mathscr{P}$ denotes the set of all such points $x\in X$, which have a neighbourhood, $P$ in $X$ such that $P\in \mathscr{P}$,  that is $X$ is locally $\mathscr{P}$ at $x$ \cite{ARN2022}.
	\end{theorem}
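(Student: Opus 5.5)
We argue exactly as in the proof of Theorem \ref{tClosureC_K(X)C_psi(X)}, replacing compact sets by members of $\mathscr{P}$. Recall from \cite{ARN2022} that $C_\mathscr{P}(X)=\{f\in C(X)\colon \overline{X\setminus Z(f)}\in \mathscr{P}\}$. Put $J=\{f\in C(X)\colon f(X\setminus X_\mathscr{P})=\{0\}\}$. We show two inclusions.

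\emph{Step 1: $J\subseteq cl_p(C_\mathscr{P}(X))$.} Let $f\in J$ and consider a basic neighbourhood $B(f,x_1,\ldots,x_n,\epsilon)$.
\begin{itemize}
\item Points $x_i\in Z(f)$ impose only the requirement $g(x_i)=0$. We will arrange this automatically, so we may focus on the $x_i\notin Z(f)$.
\item Each such $x_i$ lies in $X_\mathscr{P}$, so it has a neighbourhood $P_i\in\mathscr{P}$.
\item By complete regularity, choose $g_i\in C(X)$ with $g_i(x_i)=f(x_i)$ and $x_i\in X\setminus Z(g_i)\subseteq \overline{X\setminus Z(g_i)}\subseteq P_i$. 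To do this, take a zero-set neighbourhood of $x_i$ inside $int\,P_i$ and separate $x_i$ from its complement. Since $\mathscr{P}$ is an ideal of closed sets, it is closed under closed subsets, so $\overline{X\setminus Z(g_i)}\in\mathscr{P}$ and $g_i\in C_\mathscr{P}(X)$.
\item Choose $h_i\in C^*(X)$ with $h_i(x_i)=1$ and $h_i(x_j)=0$ for every $j\neq i$.
\item Since $C_\mathscr{P}(X)$ is an ideal, $g=\sum_i g_ih_i$ lies in $C_\mathscr{P}(X)$. Here the sum runs over those $i$ with $x_i\notin Z(f)$, so $g$ vanishes at the other $x_j$ and agrees with $f$ at every $x_i$.
\end{itemize}
Hence $g\in B(f,x_1,\ldots,x_n,\epsilon)\cap C_\mathscr{P}(X)$.

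\emph{Step 2: $cl_p(C_\mathscr{P}(X))\subseteq J$.} It suffices to show that $J$ is closed and $C_\mathscr{P}(X)\subseteq J$.
\begin{itemize}
\item $J$ is closed because it is the intersection $\bigcap_{x\in X\setminus X_\mathscr{P}} M_x$, and each $M_x$ is closed by Theorem \ref{tClosureM_p}. Alternatively, argue directly with $B(f,x_0,\epsilon)$ as in Theorem \ref{tClosureC_K(X)C_psi(X)}.
\item If $f\in C_\mathscr{P}(X)$ and $f(x)\neq0$, then $\overline{X\setminus Z(f)}\in\mathscr{P}$ is a neighbourhood of $x$. Thus $x\in X_\mathscr{P}$, and therefore $f\in J$.
\end{itemize}

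The only point needing care is the construction of $g_i$ in Step 1. It relies on $\mathscr{P}$ being hereditary with respect to closed subsets, which is part of the definition of an ideal of closed sets. Without this, $\overline{X\setminus Z(g_i)}\subseteq P_i$ would not yield membership in $\mathscr{P}$. Everything else is routine.
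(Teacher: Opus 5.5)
Your proposal is correct and follows the paper's intended route: the paper proves this theorem by transposing the proof of Theorem \ref{tClosureC_K(X)C_psi(X)}, building $g=\sum g_ih_i$ from functions supported inside members of $\mathscr{P}$, then checking that $J$ is closed and contains $C_\mathscr{P}(X)$. Your Step 1 is in fact more careful than the paper's, since you keep the $h_i$ vanishing at the points of $Z(f)$ rather than discarding those points ``without loss of generality'', and you make explicit that the argument uses that $\mathscr{P}$ is closed under closed subsets.
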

	
	\begin{remark}
		Theorem \ref{tClosureC_K(X)C_psi(X)} tells us that $X\setminus X_K\subseteq \bigcap Z[C_K(X)]$. Moreover, if $x\in X_K$, then one can construct a function in $C_K(X)$ which does not vanish at $x$. Therefore, $X\setminus X_K= \bigcap Z[C_K(X)]$. Similarly, we have $X\setminus X_\mathscr{P}= \bigcap Z[C_\mathscr{P}(X)]$, for any ideal $\mathscr{P}$ of subsets of $X$. 
	\end{remark}
	
	Theorem \ref{tClosureC_K(X)C_psi(X)} helps us characterise local compactness and local pseudocompactness, as can be seen in the following corollary.
	
	\begin{corollary} \label{cLocal}
		The following assertions hold for a Tychonoff space $X$.
		
		\begin{enumerate}[label=\arabic*.]
			\item $cl_p(C_K(X))$ is dense in $C_p(X)$ if and only if $X$ is locally compact.
			
			\item $cl_p(C_\psi(X))$ is dense in $C_p(X)$ if and only if $X$ is locally pseudocompact.
			
		\end{enumerate}
	\end{corollary}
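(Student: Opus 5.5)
The corollary follows directly from Theorem \ref{tClosureC_K(X)C_psi(X)}. Here ``$cl_p(C_K(X))$ is dense'' is read as ``$C_K(X)$ is dense in $C_p(X)$'', which is the same as $cl_p(C_K(X))=C(X)$. I give the argument for the first assertion; the second is word for word the same, with $X_\psi$, pseudocompact neighbourhoods and the second part of Theorem \ref{tClosureC_K(X)C_psi(X)} in place of $X_K$, compact neighbourhoods and the first part.

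\emph{Sufficiency.} Suppose $X$ is locally compact. Then every point of $X$ has a compact neighbourhood, so $X_K=X$ and $X\setminus X_K=\emptyset$. The condition $f(X\setminus X_K)=\{0\}$ is then vacuous, and Theorem \ref{tClosureC_K(X)C_psi(X)} gives $cl_p(C_K(X))=C(X)$. Hence $C_K(X)$ is dense.

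\emph{Necessity.} Suppose $cl_p(C_K(X))=C(X)$. Then the constant function $\boldsymbol{1}$ lies in $cl_p(C_K(X))$. By Theorem \ref{tClosureC_K(X)C_psi(X)}, $\boldsymbol{1}$ vanishes on $X\setminus X_K$. Since $\boldsymbol{1}$ vanishes nowhere, $X\setminus X_K=\emptyset$. So every point of $X$ has a compact neighbourhood, which means $X$ is locally compact.

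There is no real obstacle. The only point to check is that ``locally pseudocompact'' in the second assertion matches ``every point has a pseudocompact neighbourhood'', i.e.\ $X_\psi=X$. Under the paper's definition a neighbourhood $N$ of $x$ on which every $f\in C(X)$ is bounded is exactly what $X_\psi$ requires, provided the neighbourhoods are taken in the same sense in both places. If needed, one can pass to a closed such neighbourhood, since continuous functions bounded on $N$ are also bounded on its closure.
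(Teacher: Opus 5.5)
Your proof is correct and follows the paper's route. The paper gives no separate proof: the corollary is read off directly from Theorem~\ref{tClosureC_K(X)C_psi(X)}, since $\{f\in C(X)\colon f(X\setminus X_K)=\{0\}\}=C(X)$ exactly when $X_K=X$ (with $\boldsymbol{1}$ as witness), and likewise for $X_\psi$.

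One small correction to your last paragraph. A neighbourhood $N$ on which every $f\in C(X)$ is bounded need not be pseudocompact, even if $N$ is closed. So passing to $\overline{N}$ preserves boundedness but does not by itself give $X_\psi=X$ from the paper's definition of local pseudocompactness. The right bridge is the regular closed neighbourhood $\overline{\mathrm{int}\,N}$, which is still bounded in $X$. A regular closed subset of a Tychonoff space on which every function in $C(X)$ is bounded is pseudocompact, which gives $X_\psi=X$. The paper takes this equivalence of the two definitions for granted.
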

	
	Moreover, we recall that $C_K(X)$  is a free ideal if and only if $X$ is locally compact and $C_\psi(X)$  is a free ideal if and only if $X$ is locally pseudocompact. The next corollaries unify these observations.
	
	\begin{corollary} \label{cLocal=FreeCompact}
		The following statements are equivalent.
		\begin{enumerate}[label=\arabic*.]
			\item $X$ is locally compact.
			\item $C_K(X)$  is a free ideal in $C(X)$.
			\item $C_K(X)$ is dense in $C_p(X)$.
		\end{enumerate}
	\end{corollary}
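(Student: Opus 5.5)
We prove the cycle $1\Rightarrow 3\Rightarrow 2\Rightarrow 1$. The tools are Theorem \ref{tClosureC_K(X)C_psi(X)}, which gives $cl_p(C_K(X))=\{f\in C(X)\colon f(X\setminus X_K)=\{0\}\}$, and the Remark following Theorem \ref{tClosureC_P}, which gives $\bigcap Z[C_K(X)]=X\setminus X_K$.

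For $1\Rightarrow 3$: if $X$ is locally compact, every point has a compact neighbourhood, so $X_K=X$. Theorem \ref{tClosureC_K(X)C_psi(X)} then gives $cl_p(C_K(X))=\{f\in C(X)\colon f(\emptyset)=\{0\}\}=C(X)$, so $C_K(X)$ is dense in $C_p(X)$.

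For $3\Rightarrow 2$: suppose $cl_p(C_K(X))=C(X)$. Then the constant function $\boldsymbol{1}$ lies in $cl_p(C_K(X))$. By Theorem \ref{tClosureC_K(X)C_psi(X)} it must vanish on $X\setminus X_K$, which forces $X\setminus X_K=\emptyset$. By the Remark, $\bigcap Z[C_K(X)]=X\setminus X_K=\emptyset$, so $C_K(X)$ is free. The only point to check here is that $C_K(X)$ is a proper ideal, so that ``free ideal'' makes sense. It is an ideal because $\overline{X\setminus Z(fg)}\subseteq\overline{X\setminus Z(f)}$, and a closed subset of a compact set is compact. It is proper unless $X$ is compact. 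In the compact case $C_K(X)=C(X)$, which is trivially dense, and $X$ is locally compact anyway, so this case is harmless and we note it as a convention.

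For $2\Rightarrow 1$: if $C_K(X)$ is free, then for each $x\in X$ there is $f\in C_K(X)$ with $f(x)\neq 0$. The set $\overline{X\setminus Z(f)}$ is then a compact set containing the open set $X\setminus Z(f)\ni x$, so it is a compact neighbourhood of $x$. Hence $X$ is locally compact. Alternatively, one can simply cite the recalled fact that $C_K(X)$ is free if and only if $X$ is locally compact.

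The main potential obstacle is the step $3\Rightarrow 2$. Density of an ideal does not obviously imply freeness in general, since an essential fixed ideal could conceivably be dense. Here it works only because Theorem \ref{tClosureC_K(X)C_psi(X)} identifies the closure exactly, so density translates directly into $X_K=X$.
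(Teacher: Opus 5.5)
Your proof is correct and follows the paper's route. The paper obtains $1\Leftrightarrow 3$ from Theorem \ref{tClosureC_K(X)C_psi(X)} (via Corollary \ref{cLocal}) and $1\Leftrightarrow 2$ from the recalled fact that $C_K(X)$ is free exactly when $X$ is locally compact, which is what your cycle does, with $3\Rightarrow 2$ routed through the Remark's identity $\bigcap Z[C_K(X)]=X\setminus X_K$. Your closing worry is unfounded, though: a fixed ideal lies in some $M_x$, which is closed by Theorem \ref{tClosureM_p}, so no fixed ideal can be dense (cf.\ Corollary \ref{cFree=Dense}), and this gives $3\Rightarrow 2$ without computing the closure at all.
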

	
	\begin{corollary} \label{cLocal=FreePsuedoCompact}
		The following statements are equivalent.
		\begin{enumerate}[label=\arabic*.]
			\item $X$ is locally pseudocompact.
			\item $C_\psi(X)$  is a free ideal in $C(X)$.
			\item $C_\psi(X)$ is dense in $C_p(X)$.
		\end{enumerate}
	\end{corollary}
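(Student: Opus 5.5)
We prove the cycle $1\Rightarrow 2\Rightarrow 3\Rightarrow 1$ in Corollary \ref{cLocal=FreePsuedoCompact}, reusing Theorem \ref{tClosureC_K(X)C_psi(X)}(2) and the Remark after Theorem \ref{tClosureC_P}. The Remark shows that $X\setminus X_\psi=\bigcap Z[C_\psi(X)]$. Freeness of $C_\psi(X)$ means exactly $\bigcap Z[C_\psi(X)]=\emptyset$, so statement 2 is equivalent to $X_\psi=X$.

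For $1\Rightarrow 2$, assume $X$ is locally pseudocompact. Then every point has a neighbourhood on which all functions in $C(X)$ are bounded. We would like to conclude $X_\psi=X$, which requires a pseudocompact neighbourhood. Fix $x\in X$ with such a neighbourhood $V$. Choose a zero-set neighbourhood $W=Z(h)\subseteq V$ of $x$ with $x\in\operatorname{int}W$, using complete regularity. Since $W$ is a zero set, it is $C$-embedded-like enough for the following argument. Suppose $g\in C(W)$ were unbounded. Then $W$ would contain a sequence with $|g|\to\infty$, hence a $C$-embedded discrete copy of $\mathbb{N}$ in $X$ (or at least a locally finite family of open sets of $X$ meeting $\operatorname{int}W$). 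That would yield an unbounded function of $C(X)$ on $V$.

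This is the delicate point. The safest route is to take for $x$ the neighbourhood $\overline{U}$, where $U$ is an open set with $x\in U\subseteq\overline U\subseteq \operatorname{int}V$. We then use the standard fact that the closure of an open set is pseudocompact iff every locally finite family of open subsets of $U$ is finite. If $\overline U$ were not pseudocompact, there would be an infinite locally finite family $\{U_n\}$ of open subsets of $U$, locally finite in $\overline U$ and hence near $U$. Choosing points $y_n\in U_n$ and functions $n\,\phi_n$ with $\phi_n(y_n)=1$ and $\operatorname{supp}\phi_n\subseteq U_n$, the sum is continuous on $X$ provided the family is locally finite in $X$. The remaining check is local finiteness at points of $X\setminus\overline U$, and there it holds trivially since the $U_n\subseteq U$. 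The sum is unbounded on $V$, a contradiction. Hence $x\in X_\psi$, and therefore $C_\psi(X)$ is free.

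For $2\Rightarrow 3$, freeness gives $X_\psi=X$. Theorem \ref{tClosureC_K(X)C_psi(X)}(2) then yields $cl_p(C_\psi(X))=\{f\colon f(\emptyset)=\{0\}\}=C(X)$, so $C_\psi(X)$ is dense.

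For $3\Rightarrow 1$, density together with Theorem \ref{tClosureC_K(X)C_psi(X)}(2) forces the constant function $\boldsymbol 1$ to vanish on $X\setminus X_\psi$, so $X_\psi=X$. Every point therefore has a pseudocompact neighbourhood, on which every $f\in C(X)$ restricts to a bounded continuous function. Hence $X$ is locally pseudocompact.

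The main obstacle is the first implication: upgrading the neighbourhood on which $C(X)$ is bounded to a genuinely pseudocompact neighbourhood. The local-finiteness argument on $\overline U$ is the approach I would try first.
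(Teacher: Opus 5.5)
Your final argument is correct. For the density part it matches the paper: both of you read the equivalence with density off Theorem \ref{tClosureC_K(X)C_psi(X)}(2), since $cl_p(C_\psi(X))=C(X)$ iff $X_\psi=X$. The difference is in $1\Leftrightarrow 2$. The paper does not prove it; it recalls from the literature that $C_\psi(X)$ is free iff $X$ is locally pseudocompact. In Corollary \ref{cLocal} it also tacitly identifies its own definition of local pseudocompactness (a neighbourhood on which every $f\in C(X)$ is bounded) with $X_\psi=X$ (a pseudocompact neighbourhood). You instead use the Remark $\bigcap Z[C_\psi(X)]=X\setminus X_\psi$ to reduce freeness to $X_\psi=X$. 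You then prove the bridging lemma: if $U$ is open and every $f\in C(X)$ is bounded on $U$, then $\overline U$ is pseudocompact, via the locally-finite-family characterisation of pseudocompactness. This makes your proof self-contained and closes the definitional gap the paper glosses over. The paper's version is shorter only because it outsources exactly this step.

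For the write-up, delete the first attempt with the zero-set neighbourhood $W=Z(h)$. Zero sets need not be $C$-embedded, and an unbounded $g\in C(W)$ only yields a copy of $\mathbb{N}$ that is $C$-embedded in $W$, not in $X$. That route does not work, and you do not need it.

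In the second argument, a few small points should be stated explicitly:
\begin{enumerate}
\item You can simply take $U=\operatorname{int}V$.
\item Take an infinite locally finite family $\{V_n\}$ of nonempty open subsets of the Tychonoff space $\overline U$ and set $U_n=V_n\cap U$. Each $U_n$ is open in $X$, and it is nonempty because $U$ is dense in $\overline U$.
\item Local finiteness in $X$ at points of $\overline U$ is automatic: since $U_n\subseteq\overline U$, an open $H$ meets $U_n$ iff $H\cap\overline U$ does.
\item For continuity of $\sum_n n\phi_n$ you only need $\operatorname{coz}\phi_n\subseteq U_n$, not control of the supports.
\end{enumerate}
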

	
	Recall that for a space $X$ and an ideal $\mathscr{P}$ of closed subsets of $X$, $X$ is said to be locally $\mathscr{P}$ if each  point of $X$ has a neighborhood in $\mathscr{P}$, that is, if $X$ is locally $\mathscr{P}$ at each point in $X$ \cite{ARN2022}. Keeping this in mind, we have a more general result.
	
	\begin{corollary}
		\label{cLocallyP_free_dense}
		The following statements are equivalent.
		\begin{enumerate}[label=\arabic*.]
			\item $X$ is locally $\mathscr{P}$.
			\item $C_\mathscr{P}(X)$  is a free ideal in $C(X)$.
			\item $C_\mathscr{P}(X)$ is dense in $C_p(X)$.
		\end{enumerate}
	\end{corollary}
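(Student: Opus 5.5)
We prove the cycle $1\Rightarrow 2\Rightarrow 3\Rightarrow 1$, using Theorem \ref{tClosureC_P} and the Remark after it, which gives $\bigcap Z[C_\mathscr{P}(X)]=X\setminus X_\mathscr{P}$. Throughout, $C_\mathscr{P}(X)$ is the set of $f\in C(X)$ with $\overline{X\setminus Z(f)}\in\mathscr{P}$; it is an ideal because $\mathscr{P}$ is closed under finite unions and closed subsets.

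For $1\Rightarrow 2$, suppose $X$ is locally $\mathscr{P}$. Then $X_\mathscr{P}=X$. By the Remark, $\bigcap Z[C_\mathscr{P}(X)]=\emptyset$, so $C_\mathscr{P}(X)$ is free. To keep this self-contained, we can also exhibit, for each $x\in X$, a function witnessing freeness at $x$. Take a neighbourhood $P\in\mathscr{P}$ of $x$. By complete regularity, pick $g\in C(X)$ with $g(x)=1$ and $\overline{X\setminus Z(g)}\subseteq \operatorname{int}P$. Then $\overline{X\setminus Z(g)}$ is a closed subset of $P$, hence lies in $\mathscr{P}$. So $g\in C_\mathscr{P}(X)$ and $x\notin Z(g)$.

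For $2\Rightarrow 3$, suppose $C_\mathscr{P}(X)$ is free, so $\bigcap Z[C_\mathscr{P}(X)]=\emptyset$. The Remark then gives $X\setminus X_\mathscr{P}=\emptyset$. By Theorem \ref{tClosureC_P}, $cl_p(C_\mathscr{P}(X))=\{f\in C(X)\colon f(\emptyset)=\{0\}\}=C(X)$. Hence $C_\mathscr{P}(X)$ is dense in $C_p(X)$.

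For $3\Rightarrow 1$, suppose $C_\mathscr{P}(X)$ is dense, so Theorem \ref{tClosureC_P} gives $C(X)=\{f\colon f(X\setminus X_\mathscr{P})=\{0\}\}$. Applying this to the constant function $\boldsymbol{1}$ forces $X\setminus X_\mathscr{P}=\emptyset$. Thus every point has a neighbourhood in $\mathscr{P}$, i.e., $X$ is locally $\mathscr{P}$.

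The argument is essentially bookkeeping. The only delicate point is the equality $\bigcap Z[C_\mathscr{P}(X)]=X\setminus X_\mathscr{P}$, in particular the inclusion $\supseteq$. That inclusion needs the observation that $f(x)\neq0$ makes $\overline{X\setminus Z(f)}$ a neighbourhood of $x$ lying in $\mathscr{P}$. The reverse inclusion is the complete-regularity construction above, and it uses that $\mathscr{P}$ is closed under taking closed subsets. I expect no real obstacle beyond checking these two facts.
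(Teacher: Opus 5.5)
Your proof is correct and follows the route the paper intends. The paper gives no separate proof: the corollary is read off from Theorem \ref{tClosureC_P} together with the Remark identifying $\bigcap Z[C_\mathscr{P}(X)]$ with $X\setminus X_\mathscr{P}$, which is exactly what your cycle $1\Rightarrow 2\Rightarrow 3\Rightarrow 1$ unpacks (your step $2\Rightarrow 3$ could also be quoted directly from Corollary \ref{cFree=Dense}).
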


	The relationship between the property of being a free ideal and that of dense-ness in $C_p(X)$ as reflected by the ideals $C_K(X)$ and $C_\psi(X)$ presents us with a natural question: are all free ideals in $C(X)$ dense in $C_p(X)$? We answer this in the affirmative with the help of a much stronger result. We first establish a lemma which can be proved following steps used in the proof of  Theorem \ref{tClosureC_K(X)C_psi(X)}, to prove that the ideal $J$ is closed. 
	
	\begin{lemma} \label{lM_A}
		Let $A\subseteq X$ and $M_A=\{f\in C(X)\colon f(A)=\{0\} \}$ is a closed ideal in $C_p(X)$.
	\end{lemma}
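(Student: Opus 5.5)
The plan is to write $M_A$ as an intersection of sets already known to be closed. For each $a\in A$, the fixed maximal ideal $M_a=\{f\in C(X)\colon f(a)=0\}$ is closed in $C_p(X)$ by Theorem \ref{tClosureM_p}. Moreover, directly from the definitions,
\[ M_A=\bigcap_{a\in A} M_a , \]
since $f(A)=\{0\}$ holds exactly when $f(a)=0$ for every $a\in A$. An arbitrary intersection of closed sets is closed, so $M_A$ is closed. The degenerate case $A=\emptyset$ gives $M_A=C(X)$, the empty intersection, which is trivially closed; it is not a proper ideal, so we may assume $A\neq\emptyset$ if we insist on properness.

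For the reader who prefers the style of the closedness part of the proof of Theorem \ref{tClosureC_K(X)C_psi(X)}, I would also give the direct argument. Take $f\in cl_p(M_A)$ and fix $a\in A$. For each $\epsilon>0$ the basic neighbourhood $B(f,a,\epsilon)$ meets $M_A$, so there is $g_\epsilon\in M_A$ with $|g_\epsilon(a)-f(a)|<\epsilon$. Since $g_\epsilon(a)=0$, this gives $|f(a)|<\epsilon$ for all $\epsilon>0$, hence $f(a)=0$. As $a\in A$ was arbitrary, $f\in M_A$.

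It remains to check that $M_A$ is an ideal, which is routine. If $f,g\in M_A$ and $h\in C(X)$, then $f-g$ and $hf$ vanish on $A$. For $A\neq\emptyset$, the constant function $\boldsymbol{1}$ does not lie in $M_A$, so the ideal is proper. There is no real obstacle; the only point needing care is the convention about $A=\emptyset$.
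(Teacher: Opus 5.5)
Your proposal is correct and matches the paper. The paper proves the lemma by the same pointwise argument used for the closedness of $J$ in Theorem \ref{tClosureC_K(X)C_psi(X)}, which is exactly your second paragraph; your primary route, writing $M_A=\bigcap_{a\in A}M_a$ and invoking Theorem \ref{tClosureM_p}, is a repackaging of that same evaluation-at-a-point argument, and your remark that $A=\emptyset$ gives the non-proper $M_A=C(X)$ is a clarification the paper omits.
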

	
	\begin{theorem} \label{tClosureAll}
		For an ideal $I$ in $C(X)$, \[cl_p(I)=\{f\in C(X)\colon \bigcap Z[I]\subseteq Z(f) \}. \]
	\end{theorem}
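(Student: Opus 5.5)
Write $A=\bigcap Z[I]$. The right-hand side is exactly $M_A=\{f\in C(X)\colon f(A)=\{0\}\}$, so the claim is $cl_p(I)=M_A$. I will prove the two inclusions separately, in the same pattern as Theorem \ref{tClosureC_K(X)C_psi(X)}.

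For $cl_p(I)\subseteq M_A$, note that $I\subseteq M_A$ by definition of $A$. By Lemma \ref{lM_A}, $M_A$ is closed in $C_p(X)$, so it contains $cl_p(I)$.

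For $M_A\subseteq cl_p(I)$, take $f\in M_A$ and a basic neighbourhood $B(f,x_1,\dots,x_n,\epsilon)$. I will produce $g\in I$ with $g(x_i)=f(x_i)$ for every $i$.
\begin{enumerate}[label=\arabic*.]
\item Discard the points with $x_i\in Z(f)$: the function we build will also vanish there, because of the separating factors introduced in step 3. So assume every $x_i\notin Z(f)$.
\item Since $A\subseteq Z(f)$, each $x_i\notin A$. Hence there is $g_i\in I$ with $g_i(x_i)\neq 0$.
\item Since $X$ is Tychonoff and the points are finitely many and distinct, choose $h_i\in C(X)$ with $h_i(x_i)=1$ and $h_i(x_j)=0$ for all $j\neq i$, including the discarded points.
\item Set
\[g=\sum_{i=1}^n \frac{f(x_i)}{g_i(x_i)}\,h_i\,g_i .\]
Because $I$ is an ideal, $g\in I$. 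At each retained $x_i$ we get $g(x_i)=f(x_i)$, and at each discarded point $g$ vanishes, matching $f$.
\end{enumerate}
Thus $g\in B(f,x_1,\dots,x_n,\epsilon)\cap I$, so $f\in cl_p(I)$.

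None of the steps is a real obstacle. The only point needing care is the bookkeeping in step 1: the separating functions $h_i$ must be chosen to vanish at the discarded points of $Z(f)$ as well. Alternatively, one can include those points in the sum with coefficient $0$.

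As a check, for $I=O_p$ with $p\in X$ we have $A=\{p\}$, and the formula yields $cl_p(O_p)=M_p$. For a free ideal $A=\emptyset$, so the ideal is dense, recovering Theorems \ref{tClosureM^p} and \ref{tClosureO^p}.
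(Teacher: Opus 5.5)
Your proof is correct and matches the paper's argument: closedness of $M_A$ (Lemma \ref{lM_A}) gives $cl_p(I)\subseteq M_A$, and for the reverse inclusion you interpolate $f$ at the finitely many basic points outside $Z(f)$ by a sum $\sum_i g_ih_i$, where each $g_i\in I$ is nonzero at $x_i$ and the $h_i$ are separating functions. Your explicit scaling by $f(x_i)/g_i(x_i)$ and your remark that each $h_i$ must also vanish at the discarded points of $Z(f)$ spell out details the paper leaves implicit.
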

	
	\begin{proof}
		Denote the ideal $\{f\in C(X)\colon \bigcap Z[I]\subseteq Z(f) \}$ by $J$. It is clear that $I\subseteq J$. By Lemma \ref{lM_A}, we get the inclusion $cl_p(I)\subseteq J$. Now let $f\in J$ and consider a basic open neighbourhood $B(f,x_1,x_2,\cdots,x_n,\epsilon)$, of $f$. We have, $\bigcap Z[I]\subseteq Z(f)$. Let $k\in \{1,2,\cdots,n \}$ be such that $x_i\in Z(f)$, for all $i\in \{1,2,\cdots,k \}$ and $x_j\in X\setminus Z(f)\subseteq X\setminus \bigcap Z[I]$, for all $j\in \{k+1,k+2,\cdots,n \}$. Fix $j\in \{k+1,k+2,\cdots,n \}$. Then, as $x_j\notin \bigcap Z[I]$, there exists $g_j\in I$ such that $g_j(x_j)=f(x_j)$. Furthermore, there exists $h_j\in C(X)$ such that $h_j(x_j)=1$ and $h_j(x_i)=0$ whenever $i\in \{1,2,\cdots,n \}\setminus \{j\}$. Then $\sum\limits_{i=k+1}^ng_ih_i\in I\cap B(f,x_1,x_2,\cdots,x_n,\epsilon)$ which forces $f\in cl_p(I)$.
	\end{proof}
	
	The subsequent results are immediate from Theorem \ref{tClosureAll}.
	
	\begin{theorem} \label{cClosedIdeals}
		The family $\{M_A\colon A\text{ is a closed subset of }X \}$ constitutes the complete collection of closed ideals in $C_p(X)$.
	\end{theorem}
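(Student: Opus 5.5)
Two things have to be shown: each $M_A$ with $A$ a closed subset of $X$ is a closed ideal in $C_p(X)$, and every closed ideal in $C_p(X)$ has this form. The first half is Lemma \ref{lM_A}, which says $M_A$ is closed for every $A\subseteq X$, closed or not. We only need $M_A$ to be a proper ideal, and for that $A$ must be nonempty, since $M_\emptyset=C(X)$. So the family should be read with $A$ ranging over nonempty closed sets, or else the improper case $A=\emptyset$ is tacitly allowed.

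For the converse, let $I$ be a closed ideal in $C_p(X)$ and put $A=\bigcap Z[I]$. As an intersection of zero sets, $A$ is closed in $X$. Since $I$ is closed, $I=cl_p(I)$, and Theorem \ref{tClosureAll} gives
\[ I=cl_p(I)=\{f\in C(X)\colon A\subseteq Z(f)\}=\{f\in C(X)\colon f(A)=\{0\}\}=M_A. \]
If $I$ is proper, this also forces $A\neq\emptyset$: were $A$ empty, we would get $I=M_\emptyset=C(X)$. This recovers the earlier observation that free ideals are dense.

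The argument is immediate from Theorem \ref{tClosureAll} and Lemma \ref{lM_A}. The only point needing care is the bookkeeping around $A=\emptyset$ and the convention that ideals are proper.

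It is also worth recording the standard fact that $M_A=M_{\overline{A}}$ for every $A\subseteq X$, because zero sets are closed. This explains why restricting to closed $A$ loses nothing. Moreover, distinct closed sets $A\neq B$ give $M_A\neq M_B$: complete regularity supplies a function vanishing on one set but not at a chosen point of the other. So the indexing by closed sets is a bijection.
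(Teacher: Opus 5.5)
Your proposal is correct and follows the paper's route exactly: the paper states this result as immediate from Theorem~\ref{tClosureAll}, so that a closed ideal $I$ satisfies $I=cl_p(I)=M_{\bigcap Z[I]}$, while Lemma~\ref{lM_A} gives that each $M_A$ is closed. Your additional bookkeeping is correct and only makes explicit what the paper leaves implicit: you exclude $A=\emptyset$ under the paper's convention that ideals are proper, and you show via complete regularity that $A\mapsto M_A$ is injective on closed sets.
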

	
	\begin{corollary} \label{cFree=Dense}
		An ideal $I$ in $C_p(X)$ is dense if and only if it is a free ideal.
	\end{corollary}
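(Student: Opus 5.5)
The plan is to read the corollary off Theorem \ref{tClosureAll}, which gives $cl_p(I)=\{f\in C(X)\colon \bigcap Z[I]\subseteq Z(f)\}$ for every ideal $I$ of $C(X)$. Write $E=\bigcap Z[I]$. Recall that $I$ is free exactly when $E=\emptyset$. So it suffices to show that $cl_p(I)=C(X)$ holds if and only if $E=\emptyset$.

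\emph{Free implies dense.} Suppose $I$ is free, so $E=\emptyset$. Then the condition $E\subseteq Z(f)$ holds trivially for every $f\in C(X)$. By Theorem \ref{tClosureAll}, $cl_p(I)=C(X)$, so $I$ is dense in $C_p(X)$.

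\emph{Dense implies free.} Suppose $I$ is dense, so $cl_p(I)=C(X)$. In particular the constant function $\boldsymbol{1}$ lies in $cl_p(I)$. By Theorem \ref{tClosureAll} this gives $E\subseteq Z(\boldsymbol{1})=\emptyset$. Hence $E=\emptyset$ and $I$ is free. This direction can also be seen from Lemma \ref{lM_A}: if $E\neq\emptyset$, then $I\subseteq M_E$. By the lemma $M_E$ is closed, and it is proper because $\boldsymbol{1}\notin M_E$. So $cl_p(I)\subseteq M_E\neq C(X)$, and $I$ is not dense.

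There is no real obstacle once Theorem \ref{tClosureAll} is available. The one point to check is the convention that ``free'' means $\bigcap Z[I]=\emptyset$, rather than some condition phrased in terms of $\beta X$; with that convention the argument above applies directly.
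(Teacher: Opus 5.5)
Your proof is correct and takes the same route as the paper, which presents this corollary as immediate from Theorem \ref{tClosureAll}: when $I$ is free, $\bigcap Z[I]=\emptyset$ makes the membership condition for $cl_p(I)$ vacuous, and when $I$ is dense, $\boldsymbol{1}\in cl_p(I)$ forces $\bigcap Z[I]\subseteq Z(\boldsymbol{1})=\emptyset$. Your alternative argument for the converse, via the closed proper ideal $M_E$ of Lemma \ref{lM_A}, is also valid.
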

	
	\begin{corollary} \label{cClosed=>Z}
		For an ideal $I$ in $C(X)$, $cl_p(I)$ is a $z$-ideal and hence, if an ideal $I$ in $C_p(X)$ is closed, then it is a $z$-ideal in $C(X)$.
	\end{corollary}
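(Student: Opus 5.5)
By Theorem \ref{tClosureAll}, the closure of an ideal $I$ is
\[cl_p(I)=J:=\{f\in C(X)\colon \textstyle\bigcap Z[I]\subseteq Z(f)\}.\]
The plan is to show directly that $J$ is a $z$-ideal. The second assertion then follows at once: if $I$ is closed in $C_p(X)$, then $I=cl_p(I)=J$, so $I$ is a $z$-ideal. Recall that an ideal $J$ is a $z$-ideal when, for $f\in J$, $g\in C(X)$ and $Z(f)=Z(g)$ (equivalently $Z(f)\subseteq Z(g)$), we always have $g\in J$.

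First I check that $J$ is an ideal. Write $A=\bigcap Z[I]$, so $J=M_A$ in the notation of Lemma \ref{lM_A}.
\begin{enumerate}[label=\roman*.]
\item If $f,g\in J$, then $A\subseteq Z(f)\cap Z(g)\subseteq Z(f-g)$, so $f-g\in J$.
\item If $h\in C(X)$, then $Z(f)\subseteq Z(hf)$, so $hf\in J$.
\end{enumerate}
Properness needs a short argument, since the paper uses ``ideal'' to mean a proper ideal. Alternatively, one can simply note that the closure of an ideal in the topological ring $C_p(X)$ is an ideal, as remarked in the introduction. If $I$ is free, then $A=\emptyset$ and $J=C(X)$. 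This edge case is the only point requiring care: either the $z$-ideal condition is read so that it holds trivially for $C(X)$, or one restricts to the case $A\neq\emptyset$, where $J$ is proper because the constant $\boldsymbol{1}\notin J$.

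For the $z$-ideal property, take $f\in J$ and $g\in C(X)$ with $Z(f)\subseteq Z(g)$. Then $A\subseteq Z(f)\subseteq Z(g)$, hence $g\in J$. This is immediate and involves no real obstacle, because membership in $J$ is defined purely through zero sets.
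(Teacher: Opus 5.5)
Your proposal is correct and follows the paper's route: the paper simply declares this corollary immediate from Theorem~\ref{tClosureAll}, because $cl_p(I)=\{f\in C(X)\colon \bigcap Z[I]\subseteq Z(f)\}$ is defined purely through zero sets and a closed ideal equals its closure. You also correctly note that for a free $I$ the closure is all of $C(X)$, so the first clause holds only in the improper sense there; the paper passes over this point.
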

	\begin{corollary}
		A principal ideal $I$ is closed in $C_p(X)$ if and only if $I$ is a $z$-ideal.
	\end{corollary}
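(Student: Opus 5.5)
Let $I=(f)$ be a principal ideal of $C(X)$. We use Theorem \ref{tClosureAll}, which gives $cl_p(I)=\{g\in C(X)\colon \bigcap Z[I]\subseteq Z(g)\}$. The first step is to compute $\bigcap Z[I]$ for $I=(f)$. Every element of $I$ has the form $hf$ with $h\in C(X)$, so $Z(hf)=Z(h)\cup Z(f)\supseteq Z(f)$. Since $f\in I$, it follows that $\bigcap Z[I]=Z(f)$. Hence
\[cl_p(I)=\{g\in C(X)\colon Z(f)\subseteq Z(g)\}.\]

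Next we identify this set with the smallest $z$-ideal containing $f$. Call it $M_{Z(f)}$, in the notation of Lemma \ref{lM_A}. If $I$ is a $z$-ideal, take any $g$ with $Z(f)\subseteq Z(g)$. Then $Z(g)\supseteq Z(f)$ with $f\in I$, so the defining property of $z$-ideals (together with the fact that $Z(g)$ is a zero set) gives $g\in I$. Thus $cl_p(I)\subseteq I$, and $I$ is closed.

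Conversely, if $I$ is closed in $C_p(X)$, then $I$ is a $z$-ideal by Corollary \ref{cClosed=>Z}. This could also be checked directly: if $Z(g)=Z(k)$ for some $k\in I$, then $Z(f)\subseteq Z(k)=Z(g)$, so $g\in cl_p(I)=I$.

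No step looks like a real obstacle. The only point needing care is the computation $\bigcap Z[(f)]=Z(f)$, which is immediate since $f$ generates $I$. The equivalence then follows from Theorem \ref{tClosureAll} and Corollary \ref{cClosed=>Z}, and principality is used exactly to reduce $\bigcap Z[I]$ to a single zero set.
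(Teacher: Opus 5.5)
Your proof is correct and follows the paper's approach: the paper calls this corollary immediate from Theorem \ref{tClosureAll}, and you supply the details, using principality exactly to get $\bigcap Z[(f)]=Z(f)\in Z[I]$. The one step worth stating more cleanly is in the forward direction: $Z(f)\subseteq Z(g)$ forces $g\in I$ because $Z(g)=Z(fg)$ with $fg\in I$, so $Z(g)\in Z[I]$, and the $z$-ideal property then gives $g\in I$.
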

	
	We highlight at this point that principal ideals are not in general $z$-ideals. Indeed, the ideal $I=\langle i \rangle$ is not a $z$-ideal in $C_p(\mathbb{R})$, where $i\colon \mathbb{R}\longrightarrow \mathbb{R}$ is defined as $i(x)=x$, for all $x\in \mathbb{R}$ \cite[Section 2.4]{GJ1976}.
	
	Further note that Theorem \ref{tClosureM_p}, Theorem \ref{tClosureM^p} and Theorem \ref{tClosureO^p} can also be deduced using Theorem \ref{tClosureAll}. Moreover,  Theorem \ref{tClosureAll} helps us to answer the question raised before regarding the closure of the ideals $O_p$, where $p\in X$.
	
	\begin{corollary} \label{cClosureO_p}
		Suppose $p\in X$. The closure of the ideal $O_p$ in the space $C_p(X)$ is the ideal $M_p$.
	\end{corollary}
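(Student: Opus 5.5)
The plan is to apply Theorem \ref{tClosureAll} to $I=O_p$. Then
\[cl_p(O_p)=\{f\in C(X)\colon \bigcap Z[O_p]\subseteq Z(f)\},\]
and the whole task reduces to proving that $\bigcap Z[O_p]=\{p\}$. Once this is known, the right-hand side is exactly $\{f\in C(X)\colon f(p)=0\}=M_p$, which is the claim.

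For the inclusion $\{p\}\subseteq\bigcap Z[O_p]$, recall that $O_p=\{f\in C(X)\colon Z(f)\text{ is a neighbourhood of }p\}$ for $p\in X$. This is how the paper's $O^p$ reads when $p\in X$, since $cl_{\beta X}Z(f)$ is a neighbourhood of $p$ in $\beta X$ exactly when $Z(f)$ is a neighbourhood of $p$ in $X$. Hence every $f\in O_p$ satisfies $p\in Z(f)$, and $p$ lies in the intersection.

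For the reverse inclusion, fix $x\in X$ with $x\neq p$. Since $X$ is Tychonoff, hence Hausdorff, there are disjoint open sets $U\ni p$ and $V\ni x$. Complete regularity applied to the point $x$ and the closed set $X\setminus V$ gives $g\in C(X)$ with $g(x)=1$ and $g(X\setminus V)=\{0\}$. Then $Z(g)\supseteq X\setminus V\supseteq U$, so $Z(g)$ is a neighbourhood of $p$ and $g\in O_p$. Since $g(x)\neq 0$, the point $x$ is not in $\bigcap Z[O_p]$. Therefore $\bigcap Z[O_p]=\{p\}$.

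None of these steps is a genuine obstacle. The only point needing care is the construction of $g$, and that is a routine use of complete regularity together with the Hausdorff separation. The remaining work is a direct invocation of Theorem \ref{tClosureAll}. As a sanity check, $O_p\subseteq M_p$ and $M_p$ is closed by Theorem \ref{tClosureM_p}, which already gives $cl_p(O_p)\subseteq M_p$ independently of the computation above.
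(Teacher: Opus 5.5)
Your proposal is correct and follows the paper's route: the paper also obtains this corollary directly from Theorem~\ref{tClosureAll}. Your verification that $\bigcap Z[O_p]=\{p\}$, using a function that vanishes on a neighbourhood of $p$ but not at a given $x\neq p$, supplies the step the paper leaves implicit.
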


	The above corollary leads us to the following characterisation of $P$-spaces, using the space $C_p(X)$.
	
	\begin{theorem} \label{tO_pP_Space}
		$O_p$ is closed in $C_p(X)$ for all $p\in X$ if and only if $X$ is a $P$-space.
	\end{theorem}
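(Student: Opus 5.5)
By Corollary \ref{cClosureO_p}, for each $p\in X$ we have $cl_p(O_p)=M_p$. Hence $O_p$ is closed in $C_p(X)$ exactly when $O_p=M_p$. The statement therefore reduces to the classical characterisation of $P$-spaces: $X$ is a $P$-space if and only if $M_p=O_p$ for every $p\in X$ (Gillman--Jerison, Exercise 4J/Theorem 7.15, where $P$-spaces are characterised by every prime ideal being maximal, equivalently $O_p=M_p$). I would cite this equivalence, but I would also give the short self-contained argument, since it is elementary.

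\emph{Suppose $X$ is a $P$-space.} Take $f\in M_p$. Then $Z(f)$ is a $G_\delta$-set, so it is open. It contains $p$, so $Z(f)$ is a neighbourhood of $p$ and $f\in O_p$. Thus $M_p\subseteq O_p$. Since always $O_p\subseteq M_p$, we get $O_p=M_p=cl_p(O_p)$, and $O_p$ is closed.

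\emph{Conversely, suppose every $O_p$ is closed,} so $O_p=M_p$ for all $p$. Let $G=\bigcap_n U_n$ be a $G_\delta$-set and let $p\in G$.
\begin{itemize}
\item By complete regularity, choose for each $n$ a function $f_n\in C(X)$ with $0\le f_n\le 1$, $f_n(p)=0$ and $f_n(X\setminus U_n)=\{1\}$.
\item Put $f=\sum_n 2^{-n}f_n$. This series converges uniformly, so $f\in C(X)$, and we have $p\in Z(f)\subseteq G$.
\item Since $f\in M_p=O_p$, the set $Z(f)$ is a neighbourhood of $p$ contained in $G$.
\end{itemize}
Hence $G$ is a neighbourhood of each of its points, so $G$ is open and $X$ is a $P$-space.

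The only point needing care is the converse step of producing a zero-set $Z(f)$ with $p\in Z(f)\subseteq G$. The uniform-convergence construction above handles this routinely, so I do not expect any genuine obstacle.
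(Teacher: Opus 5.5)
Your proof is correct and follows the paper's route. You use Corollary \ref{cClosureO_p} to reduce closedness of $O_p$ to ``$O_p=M_p$ for all $p\in X$ if and only if $X$ is a $P$-space'', which the paper simply cites from Gillman--Jerison (Exercise 4J). Your self-contained proof of that classical equivalence is sound and just makes explicit what the paper leaves to the citation: zero-sets are $G_\delta$ in one direction, and in the other you take $f=\sum_n 2^{-n}f_n$ with $p\in Z(f)\subseteq G$.
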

	
	\begin{proof}
		If each $O_p$ is closed in $C_p(X)$, where $p\in X$; then it follows from Corollary \ref{cClosureO_p} that $O_p=M_p$. By \cite[Exercise 4J]{GJ1976}, we conclude that $X$ is a $P$-space. Converse can also be achieved using \cite[Exercise 4J]{GJ1976} and Corollary \ref{cClosureO_p}.
	\end{proof}
	
	Recall that a prime ideal $P$ in $C(X)$ lies between the ideals $O^p$ and $M^p$, for some $p\in \beta X$ \cite[Theorem 7.15]{GJ1976}. Building from this and the above results, we have the following observation.
	
	\begin{corollary} \label{cPrimeClosure}
		Let $P$ be a prime ideal in $C(X)$, then
		\begin{enumerate} [label=(\roman*)]
			\item $cl_p(P)=C(X)$, if $P$ is a free ideal, and
			\item $cl_p(P)=M^p$, if $P$ is fixed.
		\end{enumerate} 
	\end{corollary}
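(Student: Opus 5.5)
Both parts will follow from Theorem \ref{tClosureAll}, which gives $cl_p(P)=\{f\in C(X)\colon \bigcap Z[P]\subseteq Z(f)\}$. The work is to compute $\bigcap Z[P]$ for a prime ideal $P$.

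For part (i), if $P$ is free then $\bigcap Z[P]=\emptyset$. So every $f\in C(X)$ satisfies the defining condition, and $cl_p(P)=C(X)$. This is also Corollary \ref{cFree=Dense}.

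For part (ii), I will use \cite[Theorem 7.15]{GJ1976}: there is a point $p\in\beta X$ with $O^p\subseteq P\subseteq M^p$. The first step is to show that $P$ fixed forces $p\in X$. Suppose instead $p\in\beta X\setminus X$. Then $O^p$ is free, since it is dense in $C_p(X)$ by Theorem \ref{tClosureO^p}, hence free by Corollary \ref{cFree=Dense}. So $\bigcap Z[O^p]=\emptyset$. Because $O^p\subseteq P$, we get $\bigcap Z[P]\subseteq\bigcap Z[O^p]=\emptyset$, so $P$ would be free, a contradiction. Hence $p\in X$, and then $O_p\subseteq P\subseteq M_p$. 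Consequently the $M^p$ in the statement is to be read as the fixed maximal ideal $M_p$.

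Next I compute $\bigcap Z[P]$. From $P\subseteq M_p$, every member of $P$ vanishes at $p$, so $p\in\bigcap Z[P]$. From $O_p\subseteq P$, we get $\bigcap Z[P]\subseteq\bigcap Z[O_p]$. This last intersection is $\{p\}$: for $q\neq p$, complete regularity of $X$ gives $g\in C(X)$ vanishing on a neighbourhood of $p$ with $g(q)=1$. Then $g\in O_p$ and $q\notin Z(g)$. Therefore $\bigcap Z[P]=\{p\}$, and Theorem \ref{tClosureAll} yields $cl_p(P)=\{f\colon f(p)=0\}=M_p$. Equivalently, Corollary \ref{cClosureO_p} and Theorem \ref{tClosureM_p} give $M_p=cl_p(O_p)\subseteq cl_p(P)\subseteq cl_p(M_p)=M_p$.

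The only real obstacle is the first step of part (ii): excluding $p\in\beta X\setminus X$ for a fixed prime ideal. That relies on $O^p$ being free when $p\notin X$, which is handled by combining Theorem \ref{tClosureO^p} with Corollary \ref{cFree=Dense}.
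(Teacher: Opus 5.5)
Your proposal is correct and follows essentially the route the paper indicates: you place $P$ between $O^p$ and $M^p$ via \cite[Theorem 7.15]{GJ1976}, dispose of the free case by Corollary \ref{cFree=Dense}, and in the fixed case use $M_p=cl_p(O_p)\subseteq cl_p(P)\subseteq cl_p(M_p)=M_p$ (equivalently, $\bigcap Z[P]=\{p\}$ together with Theorem \ref{tClosureAll}). You also supply two details the paper leaves implicit: that a fixed $P$ forces $p\in X$ (because $O^p$ is free when $p\in\beta X\setminus X$), and that $M^p$ in the statement should be read as $M_p$.
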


	We shift our focus to $z^\circ$-ideals. 
	
	\begin{remark} \label{remz0}
		We realise there is no general implication between $z^\circ$-ideals in $C(X)$ and closed ideals in $C_p(X)$. Note that if $X$ has a point $p$ which is not an almost $P$-point, then $M_p$ is closed in $C_p(X)$, but is not a $z^\circ$-ideal. Indeed, as $p\in X$ is not an almost $P$-point, there exists $f\in C(X)$ such that $p\in Z(f)$ but $int\;Z(f)=\emptyset$. Now, $int\;Z(f)=int\;Z(\boldsymbol{1})$ with $f\in M_p$ but $\boldsymbol{1}\notin M_p$. That $M_p$ is closed has been noted in Theorem \ref{tClosureM_p}. Moreover, for any $p\in X$, $O_p$ is a $z^\circ$-ideal which is not closed, in general (Corollary \ref{cClosureO_p}).
	\end{remark}
	
	We use the preceding remark to formulate a necessary and sufficient condition for $X$ to be an almost $P$-space.
	
	\begin{theorem} \label{tAlmostPSpace}
		Every closed ideal in $C_p(X)$ is a $z^\circ$-ideal if and only if $X$ is an almost $P$-space.
	\end{theorem}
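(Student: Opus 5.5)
By Theorem \ref{cClosedIdeals}, the closed ideals in $C_p(X)$ are exactly the ideals $M_A=\{f\in C(X)\colon f(A)=\{0\}\}$ with $A$ a closed subset of $X$. The theorem therefore reduces to showing that every $M_A$ (with $A$ closed and nonempty, so that $M_A$ is proper) is a $z^\circ$-ideal if and only if $X$ is almost $P$. I would prove the two implications separately, using the standard fact that $X$ is an almost $P$-space if and only if every nonempty zero-set has nonempty interior. Equivalently, $\overline{int\;Z(f)}=Z(f)$ for every $f\in C(X)$: if some point $x\in Z(f)$ lay outside $\overline{int\;Z(f)}$, a zero-set neighbourhood of $x$ intersected with $Z(f)$ would be a nonempty zero-set with empty interior.

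For necessity, I would use Remark \ref{remz0}. Suppose every closed ideal is a $z^\circ$-ideal. If $X$ were not almost $P$, there would be a point $p$ and some $f\in C(X)$ with $p\in Z(f)$ and $int\;Z(f)=\emptyset$. Then $M_p$ is closed by Theorem \ref{tClosureM_p}. We have $int\;Z(f)=\emptyset=int\;Z(\boldsymbol{1})$ and $f\in M_p$, yet $\boldsymbol{1}\notin M_p$. So $M_p$ is not a $z^\circ$-ideal, a contradiction.

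For sufficiency, let $X$ be almost $P$, let $A$ be closed, and take $f\in C(X)$ and $g\in M_A$ with $int\;Z(f)=int\;Z(g)$. Since $A\subseteq Z(g)=\overline{int\;Z(g)}$ by the almost $P$ property, we get
\[A\subseteq \overline{int\;Z(g)}=\overline{int\;Z(f)}\subseteq Z(f),\]
where the last inclusion holds because $Z(f)$ is closed. Hence $f\in M_A$, so $M_A$ is a $z^\circ$-ideal.

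The only step that needs care is the equivalence $\overline{int\;Z(f)}=Z(f)$. I would establish it directly as described above, by shrinking $Z(f)$ with a zero-set neighbourhood of a point $x\in Z(f)\setminus\overline{int\;Z(f)}$. The nonempty zero-set $Z(f)\cap Z(h)$, with $Z(h)$ a zero-set neighbourhood of $x$ missing $\overline{int\;Z(f)}$, has empty interior, which contradicts the almost $P$ property.
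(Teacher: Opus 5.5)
Your proof is correct. The necessity direction is the same as the paper's: at a point that is not an almost $P$-point, the closed ideal $M_p$ fails to be a $z^\circ$-ideal (Remark~\ref{remz0}).

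For sufficiency you take a different, self-contained route. The paper argues in one line: every closed ideal is a $z$-ideal (Corollary~\ref{cClosed=>Z}), and \cite[Theorem 2.14]{AKA1999} says that in an almost $P$-space every $z$-ideal is a $z^\circ$-ideal. You instead use the description of closed ideals as the ideals $M_A$ (Theorem~\ref{cClosedIdeals}) and check the $z^\circ$ condition directly, via the regular-closedness $Z(g)=\overline{int\;Z(g)}$ of zero-sets, which you correctly derive from the almost $P$ property.

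Your route needs no external result, and your computation gives more than you use. From $int\;Z(f)=int\;Z(g)$ you get $Z(g)=\overline{int\;Z(g)}=\overline{int\;Z(f)}\subseteq Z(f)$. This shows that every $z$-ideal is a $z^\circ$-ideal in an almost $P$-space, so you have essentially reproved the relevant direction of the cited theorem, with $M_A$ as a special case. The paper's citation is shorter and places the result inside the known list of equivalent conditions for almost $P$-spaces, which it uses in the theorem that follows.
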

	
	\begin{proof}
		Suppose $p\in X$ is not an almost $P$-point. Then, as seen in Remark \ref{remz0}, the ideal $M_p$ is closed in $C_p(X)$, but is not a $z^\circ$-ideal in $C(X)$. Conversely let $X$ be an almost $P$-space and $I$, a closed ideal in $C(X)$. Then by Corollary \ref{cClosed=>Z} and \cite[Theorem 2.14]{AKA1999}, $I$ is a $z^\circ$-ideal.
	\end{proof}
	
	Combining Theorem 2.14 in \cite{AKA1999},  Remark \ref{remz0} and Theorem \ref{tAlmostPSpace}, we have the following result.
	
	\begin{theorem} 
		The following statements are equivalent. 
		
		\begin{enumerate}[label=\arabic*]
			\item $X$ is an almost $P$-space.
			\item Every $z$-ideal in $C(X)$ is a $z^\circ$-ideal.
			\item Every maximal ideal in $C(X)$ is a $z^\circ$-ideal.
			\item Every fixed maximal ideal in $C(X)$ is a $z^\circ$-ideal.
			\item Every closed ideal in $C_p(X)$ is a $z^\circ$-ideal.
		\end{enumerate}
	\end{theorem}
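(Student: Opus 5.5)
The plan is to prove the cycle $1\Rightarrow 2\Rightarrow 3\Rightarrow 4\Rightarrow 1$ and then attach statement 5 through Theorem \ref{tAlmostPSpace}.

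For $1\Rightarrow 2$ we cite \cite[Theorem 2.14]{AKA1999}. This is the result already used in the proof of Theorem \ref{tAlmostPSpace}: in an almost $P$-space every $z$-ideal is a $z^\circ$-ideal. If we want a self-contained check, the argument is short. Suppose $I$ is a $z$-ideal, $g\in I$, and $int\;Z(f)=int\;Z(g)$. In an almost $P$-space every nonempty zero set has nonempty interior. Applying this to zero sets contained in $Z(f)$, we get $Z(f)=cl\,(int\;Z(f))$, and likewise $Z(g)=cl\,(int\;Z(g))$. Hence $Z(f)=Z(g)$, and $f\in I$ because $I$ is a $z$-ideal.

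The implication $2\Rightarrow 3$ holds because every maximal ideal is a $z$-ideal \cite{GJ1976}. The implication $3\Rightarrow 4$ is trivial.

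For $4\Rightarrow 1$ we argue by contraposition, exactly as in Remark \ref{remz0}. If $X$ is not an almost $P$-space, some nonempty zero set $Z(f)$ has empty interior. Pick $p\in Z(f)$. Then $int\;Z(f)=\emptyset=int\;Z(\boldsymbol{1})$ and $f\in M_p$, but $\boldsymbol{1}\notin M_p$. So the fixed maximal ideal $M_p$ is not a $z^\circ$-ideal.

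Statement 5 is equivalent to statement 1 by Theorem \ref{tAlmostPSpace}. Alternatively, we can link it directly. For $2\Rightarrow 5$, a closed ideal is a $z$-ideal by Corollary \ref{cClosed=>Z}. For $5\Rightarrow 4$, each $M_p$ is closed in $C_p(X)$ by Theorem \ref{tClosureM_p}.

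The only step with real content is $1\Rightarrow 2$. It rests on the equality $Z(h)=cl\,(int\;Z(h))$ in almost $P$-spaces. To prove it, take $x\in Z(h)$ and a cozero neighbourhood $U$ of $x$. The set $Z(h)\cap(X\setminus U)^c$ contains a nonempty zero set $Z(h)\cap Z(k)$ inside $U$, for a suitable $k$ with $Z(k)$ a neighbourhood of $x$ contained in $U$. This zero set has nonempty interior, which is contained in $int\;Z(h)\cap U$. Since the argument is already in \cite{AKA1999}, we would simply cite it.
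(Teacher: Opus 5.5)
Your proposal is correct and is essentially the paper's own argument: the paper likewise obtains the equivalence by combining \cite[Theorem 2.14]{AKA1999} (for $1\Rightarrow 2$), the $M_p$ construction of Remark \ref{remz0} (for $4\Rightarrow 1$), and Theorem \ref{tAlmostPSpace} (for statement 5). Your self-contained check of $1\Rightarrow 2$, via the fact that zero sets in an almost $P$-space are regular closed, is also sound, as is your direct linking of statement 5 through Corollary \ref{cClosed=>Z} and Theorem \ref{tClosureM_p}.
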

	
	We now proceed to examine the behaviour of essential ideals with respect to the space $C_p(X)$. Note that in $C(\mathbb{R})$, both $M_0$ and $O_0$ are essential ideals. However, $M_0$ is closed in $C_p(X)$ and $O_0$ is neither closed, nor dense in $C_p(X)$. By Corollary \ref{cFree=Dense}, the only dense essential ideals are the ones that are free. This fact, together with \cite[Corollary 3.5]{A1995} yields the next result.
	
	\begin{theorem}
		The following statements are equivalent.
		\begin{enumerate}[label=\arabic*]
			\item Every essential ideal in $C_p(X)$ is dense.
			\item Every essential ideal in $C(X)$ is free.
			\item $X$ is discrete.
		\end{enumerate}
	\end{theorem}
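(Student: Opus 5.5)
We prove the cycle of implications $(1)\Rightarrow(2)\Rightarrow(3)\Rightarrow(2)\Rightarrow(1)$. The link between (1) and (2) comes from Corollary \ref{cFree=Dense}; the link between (2) and (3) is essentially \cite[Corollary 3.5]{A1995}, but we also give a direct argument for it.

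\emph{(1) and (2).} By Corollary \ref{cFree=Dense}, an ideal $I$ in $C(X)$ is dense in $C_p(X)$ if and only if it is free. Hence the statement ``every essential ideal is dense'' is literally the same as ``every essential ideal is free''. So (1) and (2) are equivalent with no further work.

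\emph{(3) $\Rightarrow$ (2).} Suppose $X$ is discrete and let $I$ be an essential ideal. By definition $int\,\bigcap Z[I]=\emptyset$. In a discrete space every subset is open, so $\bigcap Z[I]$ equals its own interior and is therefore empty. Thus $I$ is free.

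\emph{(2) $\Rightarrow$ (3).} Assume $X$ is not discrete and pick a non-isolated point $p\in X$. We claim $M_p$ is essential. We have $\bigcap Z[M_p]=\{p\}$: any $x\neq p$ can be separated from $p$ by a function vanishing at $p$ and not at $x$, by complete regularity. Since $p$ is not isolated, $int\{p\}=\emptyset$, so $M_p$ is essential. But $M_p$ is fixed, hence not free, contradicting (2).

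The only delicate point is the equality $\bigcap Z[M_p]=\{p\}$, which is exactly where Tychonoffness is used. Everything else is a direct application of Corollary \ref{cFree=Dense} together with the definitions of essential and free ideals. Alternatively, this direction can be quoted from \cite[Corollary 3.5]{A1995}.
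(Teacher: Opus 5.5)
Your proof is correct and follows the paper's route. The paper gets (1)$\Leftrightarrow$(2) from Corollary \ref{cFree=Dense} and simply cites \cite[Corollary 3.5]{A1995} for (2)$\Leftrightarrow$(3). You instead verify (2)$\Leftrightarrow$(3) directly from the definition $int\;\bigcap Z[I]=\emptyset$: in a discrete space $\bigcap Z[I]$ is open, and $M_p$ is essential for a non-isolated $p$. This makes your argument self-contained.
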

	
	We further wonder for what kind of topological spaces are all the essential ideals in $C(X)$ closed in $C_p(X)$. Before getting into that, let us recall the definition of convex ideal and absolutely convex ideal in $C(X)$.
	\begin{definitions}
		\	\begin{enumerate}
			\item An ideal $I$ in $C(X)$ is said to be a convex ideal if whenever $\textbf{0}\leq f\leq g$, where $g\in  I$ and $f\in C(X)$, then $f\in I$.
			\item An ideal $I$ in $C(X)$ is said to be an absolutely convex ideal if when ever $|f|\leq |g|$, where $g\in  I$ and $f\in C(X)$, then $f\in I$.
		\end{enumerate}
	\end{definitions}
	We know that every $z$-ideal $I$ in $C(X)$ is absolutely convex and every absolutely convex ideal is convex \cite[Chapter 5]{GJ1976}. Therefore, in view of Theorem \ref{cClosed=>Z} we have the following theorem.
	\begin{theorem}
		Every closed ideal $I$ in $C_p(X)$ is absolutely convex and hence convex.
	\end{theorem}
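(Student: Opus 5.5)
The theorem claims that every closed ideal $I$ in $C_p(X)$ is absolutely convex, and hence convex. The plan is to reduce this to the $z$-ideal property and then verify absolute convexity directly from zero sets. Let $I$ be closed in $C_p(X)$. By Corollary \ref{cClosed=>Z}, or equally by Theorem \ref{cClosedIdeals}, $I$ is a $z$-ideal; concretely, $I=M_A$ for some closed $A\subseteq X$, and indeed $A=\bigcap Z[I]$ by Theorem \ref{tClosureAll}. We could now simply cite \cite[Chapter 5]{GJ1976}, which says $z$-ideals are absolutely convex. For a self-contained argument, the description $I=M_A$ makes the check a one-line computation.

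\emph{Absolute convexity.} Suppose $f\in C(X)$, $g\in I$ and $|f|\leq|g|$. Then at every point $x\in A$ we have $|f(x)|\leq|g(x)|=0$, so $f(A)=\{0\}$ and therefore $f\in M_A=I$. Put differently, $Z(g)\subseteq Z(f)$, and a $z$-ideal containing $g$ must contain $f$.

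\emph{Convexity.} If $\boldsymbol{0}\leq f\leq g$ with $g\in I$, then $|f|=f\leq g=|g|$, so the previous step already gives $f\in I$.

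There is no real obstacle here. The only point needing care is the identification of closed ideals with the $M_A$, i.e.\ that $cl_p(I)=I$ forces $I=\{f\colon \bigcap Z[I]\subseteq Z(f)\}$. This comes directly from Theorem \ref{tClosureAll}, and everything after it is pointwise bookkeeping.
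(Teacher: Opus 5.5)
Your proof is correct and follows the same route as the paper: closed ideals in $C_p(X)$ are $z$-ideals (Corollary \ref{cClosed=>Z}), and $z$-ideals are absolutely convex, hence convex. The only difference is that the paper cites \cite[Chapter 5]{GJ1976} for the last step, whereas you verify it directly from the description $I=M_{\bigcap Z[I]}$ given by Theorem \ref{tClosureAll}.
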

	
	We now that the only situation in which every ideal (resp.\ $z$-ideal, $z^\circ$-ideal, etc) is closed in the space $X_p(X)$ is when $X$ is a finite set.
	
	\begin{theorem}
		For a space $X$, the following statements are equivalent:
		\begin{enumerate}
			\item Every ideal is closed in $C_p(X)$.
			\	\item  Every convex ideal is closed in $C_p(X)$.
			\item Every absolutely  convex ideal is closed in $C_p(X)$.
			\item Every $z$-ideal is closed in $C_p(X)$.
			\item Every $z^\circ$-ideal is closed in  $C_p(X)$.
			\item  Every essential ideal is closed in $C_p(X)$.
			\item Every prime ideal is closed in $C_p(X)$.
			\item  $X$ is finite.

		\end{enumerate}
	\end{theorem}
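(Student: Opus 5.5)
The plan is to show that (8) implies (1), that (1) implies each of (2)--(7) trivially, and that each of (2)--(7) implies (8). The last step will go through the ideal $O_p$ and Corollary \ref{cClosureO_p}.

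For (8)$\Rightarrow$(1), suppose $X$ is finite. Since $X$ is Tychonoff, it is $T_1$, and hence discrete, so $C(X)=\mathbb{R}^X$. The topology of pointwise convergence is then the Euclidean topology on $\mathbb{R}^n$. Every ideal $I$ is closed by Theorem \ref{tClosureAll}, as follows. The set $A=\bigcap Z[I]$ satisfies $I\subseteq M_A$. Conversely, for each $x\notin A$ choose $g_x\in I$ with $g_x(x)\neq 0$. Then $g=\sum_{x\notin A}g_x^2\in I$ and $Z(g)=A$. Any $f\in M_A$ can be written $f=gh$ with $h=f/g$ off $A$ and $h=0$ on $A$; this $h$ is continuous because $X$ is discrete. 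Hence $I=M_A=cl_p(I)$.

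(1) implies each of (2)--(7), since each of those statements concerns a subclass of all ideals.

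For the converse, it suffices to produce, whenever $X$ is infinite, a single ideal that is a prime, essential, $z^\circ$-, $z$-, absolutely convex and convex ideal, yet is not closed. If $X$ is infinite and not a $P$-space, I take $O_p$ for a point $p$ with $O_p\neq M_p$, which exists by Theorem \ref{tO_pP_Space}. Then $O_p$ is a $z^\circ$-ideal (noted in Remark \ref{remz0}), hence a $z$-ideal, hence absolutely convex and convex. Its closure is $M_p$ by Corollary \ref{cClosureO_p}, so it is not closed. For the essential and prime cases, take a minimal prime ideal $P\supseteq O_p$ with $P\subsetneq M_p$; I expect such a $P$ to exist because $O_p$ is the intersection of the primes contained in $M_p$. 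By Corollary \ref{cPrimeClosure}, $cl_p(P)=M_p\neq P$. For essentiality, I would use the essential $z^\circ$-ideal $O_p$ when $p$ is non-isolated; then $\bigcap Z[O_p]=\{p\}$ has empty interior.

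The main obstacle is the case where $X$ is an infinite $P$-space, and more generally making the essential case work. There I would switch to free ideals. An infinite Tychonoff space is not compact when it is a $P$-space, since compact $P$-spaces are finite. So there is a free maximal ideal $M^p$, which is prime, a $z$-ideal, and a $z^\circ$-ideal when $X$ is almost $P$. It is essential because it is free, and it is dense but proper by Theorem \ref{tClosureM^p}, hence not closed. This covers (2)--(7) at once. In the non-$P$ case I would similarly prefer to handle essentiality by using $O_p$ with $p$ non-isolated, which exists since a space with all points isolated is a $P$-space. I expect the delicate verification to be the existence of a non-isolated point in the non-$P$ case, together with producing a non-closed $z^\circ$-ideal in that case, which $O_p$ supplies.
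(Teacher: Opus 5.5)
Your proposal is correct, but it is organised around a different pivot from the paper's proof.

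\textbf{The paper's route.} The paper splits on \emph{compactness} and argues contrapositively.
\begin{itemize}
\item Under (5) or (6): if $X$ were not compact, then $O^p$ for $p\in\beta X\setminus X$ would be a free, hence dense, $z^\circ$-ideal and essential ideal. So $X$ is compact. Closedness of $O_p$ then forces $O_p=M_p$ for every $p\in X$, so $X$ is a compact $P$-space and hence finite.
\item Under (7): free maximal ideals are prime, so closedness forces compactness. Then every prime between $O_p$ and $M_p$ equals $M_p$, so $X$ is a $P$-space.
\item For (8)$\Rightarrow$(1): the paper notes $C_p(X)=C_m(X)$ and quotes the $C_m$ result for $P$-spaces.
\end{itemize}

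\textbf{Your route.} You split on the \emph{$P$-space property} and exhibit explicit non-closed witnesses.
\begin{itemize}
\item Non-$P$ case: $O_p$ serves for the $z^\circ$, $z$, convex and essential classes, and a prime $P$ with $O_p\subseteq P\subsetneq M_p$ serves for primes.
\item Infinite $P$-space case: a single free maximal ideal $M^p$ serves for all six classes at once. This works because zero-sets are open in a $P$-space, so every $z$-ideal is a $z^\circ$-ideal.
\end{itemize}

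\textbf{Comparison.} Your route never needs the fact, which the paper uses without comment, that $O^p$ is a $z^\circ$-ideal for $p\in\beta X\setminus X$. Your (8)$\Rightarrow$(1) is self-contained. You also actually check that $O_p$ is essential. The paper's (6)$\Rightarrow$(8) tacitly needs this, since it applies (6) to every $O_p$. That is only legitimate at non-isolated points, though it is harmless because $O_p=M_p$ at isolated points. The paper's route is shorter and uses only ``free $\Rightarrow$ dense'' plus the compactness/$P$-space facts. Both routes rest on the same inputs: compact $P$-spaces are finite, and $X$ is a $P$-space iff $O_p=M_p$ for all $p$ (equivalently, $O_p$ is the intersection of the primes contained in $M_p$).

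\textbf{One correction.} To justify non-isolatedness in the essential case, you say that a space whose points are all isolated is a $P$-space. That only produces \emph{some} non-isolated point, which is not enough. At a non-isolated $P$-point $q$ one has $O_q=M_q$, which is closed. What you need is that the point $p$ you already chose, with $O_p\neq M_p$, is automatically non-isolated. If $p$ were isolated, every $f\in M_p$ would vanish on the open set $\{p\}$, giving $O_p=M_p$. With this, $\bigcap Z[O_p]=\{p\}$ has empty interior, so $O_p$ is a non-closed essential ideal.

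Minimality of the prime $P$ is also unnecessary. Any prime strictly inside $M_p$ contains $O_p$, so $\bigcap Z[P]=\{p\}$ and $cl_p(P)=M_p\neq P$.
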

	\begin{proof}
		$(1) \implies (2) \implies (3)\implies (4)\implies (5)$: Trivial.
		
		$(1)\implies (6)/(7)$: Trivial.
		
		$(5)/(6)\implies (8)$: It is enough to show that $X$ is a compact $P$-space \cite[4K]{GJ1976}. If possible, let $X$ be not compact. Choose, a point $p\in \beta X\setminus X$. Then by Theorem \ref{tClosureO^p}, $O^p$ is a dense $z^\circ$-ideal/essential ideal, which is a contradiction. Hence, $X$ is compact. Now, it remains to show that $X$ is a $P$-space. Let $p\in X$. Then by Theorem \ref{cClosureO_p}, $O_p=cl_p(O_p)=M_p$, for all $p\in X$. Consequently, $X$ is a $P$-space, \cite[4J]{GJ1976}.
		
		$(7)\implies (8)$: Since every maximal ideal is a prime ideal then by Theorem \ref{ccompact}, $X$ is compact. Let $P$ be a prime ideal in $C(X)$. Then $O_p\subseteq P\subseteq M_p$ for some $p\in X$. Therefore, $P=cl_p(P)=M_p$ and hence $P$ is a maximal ideal. Consequently, $X$ is a $P$-space \cite[4J]{GJ1976}.
		
		$(8)\implies (1)$: Since $X$ is finite, then $C_p(X)=C_m(X)$. Consequently, every ideal is closed in $C_p(X)$ as $X$ is also a $P$-space.
	\end{proof}

	It is an appropriate time to recall that following results.
	\begin{theorem}\cite[Exercise 7Q]{GJ1976}
		\begin{enumerate}
			\item An ideal $I$ in $C(X)$, $cl_m(I)= \bigcap\{M^p: p\in \beta X\text{ and }I\subseteq M^p\}$.
			\item  Every ideal $I$ is closed in $C_m(X)$ if and only if $X$ is a $P$-space.
		\end{enumerate}
	\end{theorem}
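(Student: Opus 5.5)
Both parts concern $C_m(X)$, the $m$-topology, whose basic neighbourhoods are $\{g\colon |g-f|<u\}$ with $u$ a positive unit of $C(X)$. For (1), write $J=\bigcap\{M^p\colon p\in\beta X,\ I\subseteq M^p\}$. The plan is to show two things: that every maximal ideal is closed in $C_m(X)$, which gives $cl_m(I)\subseteq J$, and that every $f\in J$ is an $m$-limit of $I$, which gives the reverse inclusion.

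\textbf{Maximal ideals are closed.} Since $C_m(X)$ is a topological ring and units form an open set, the closure of a proper ideal $I$ is still proper: if $\boldsymbol{1}\in cl_m(I)$, some $g\in I$ satisfies $|g-\boldsymbol{1}|<\tfrac12$, so $g$ is a unit, a contradiction. Hence each maximal ideal $M^p$ is closed, and $cl_m(I)\subseteq cl_m(M^p)=M^p$ whenever $I\subseteq M^p$. This yields $cl_m(I)\subseteq J$.

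\textbf{Reverse inclusion.} Let $f\in J$ and let $u$ be a positive unit. I aim to produce $g\in I$ with $|f-g|\le u/2<u$.
\begin{enumerate}
\item Put $Z_0=\{x\colon |f(x)|\ge u(x)/2\}$, a zero-set contained in $X\setminus Z(f)$.
\item I claim $Z_0$ meets no zero-set $Z(h)$ with $h\in I$, at least after a suitable choice. The key fact, from Gillman--Jerison 2.7 and 7.Q, is that $f$ lies in every $M^p$ containing $I$ exactly when $f$ lies in every $z$-ultrafilter containing $Z[I]$. If $Z_0$ met every member of the $z$-filter generated by $Z[I]$, some $z$-ultrafilter $\mathcal{U}\ni Z_0$ would contain $Z[I]$. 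Its point $p$ has $I\subseteq M^p$, so $f\in M^p$ and $Z(f)\in\mathcal{U}$. But $Z(f)\cap Z_0=\emptyset$, a contradiction.
\item Therefore some $h\in I$, which we may take with $h\ge0$ by replacing it with $h^2$, satisfies $Z(h)\cap Z_0=\emptyset$.
\item Disjoint zero-sets are completely separated, so there is $k\in C(X)$ with $k=1$ on $Z_0$ and $k=0$ near $Z(h)$. Then $k\in (h)$, hence $k\in I$. To see this, write $k=h\cdot (k/h)$ on $X\setminus Z(h)$ and $0$ elsewhere; this is continuous provided $k$ vanishes on a zero-set neighbourhood of $Z(h)$, which I arrange by taking $k$ to vanish on $\{h\le\delta\}$ with $\delta$ chosen relative to $Z_0$.
\item Set $g=fk\in I$. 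Then $f-g=f(1-k)$ vanishes on $Z_0$ and has modulus $<u/2$ off $Z_0$, provided $0\le k\le 1$.
\end{enumerate}

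\textbf{Main obstacle.} The delicate point is step 4, because $h$ need not be bounded away from $0$ on $Z_0$ uniformly. The plan is to work instead with the zero-sets $\{h\le\delta\}$, which is why I use a positive function rather than a constant. Replace the constant $\delta$ by a positive unit: choose $\delta=h/2$ on $Z_0$, extended suitably. Equivalently, define $k=\min(1,\,2h/v)$ for a positive unit $v$ with $v\le h$ on $Z_0$, obtained by extending $h|_{Z_0}$ using $h+\mathbf{1}_{\text{dist}}$. Once $k$ lies in $I$ (it equals $h\cdot\min(1/h,2/v)$, a multiple of $h$ by a continuous function), the estimate in step 5 closes the argument.

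\textbf{Part (2).} If $X$ is a $P$-space, every ideal is a $z$-ideal and an intersection of maximal ideals (GJ 4J/5.5), hence closed by (1). Conversely, if every ideal is $m$-closed, then $O_p=cl_m(O_p)=M_p$ for each $p\in X$, because $M_p$ is the only maximal ideal containing $O_p$. By GJ 4J this makes $X$ a $P$-space.
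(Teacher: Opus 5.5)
The paper gives no proof of this statement. It is quoted from Gillman--Jerison, Exercise 7Q, and serves only as motivation for the $C_p$-analogue, Theorem \ref{maximalClosureAll}. Your proposal supplies the standard argument and is correct in substance:
\begin{itemize}
\item Maximal ideals are $m$-closed, because $C_m(X)$ is a topological ring whose units form an open set; this gives $cl_m(I)\subseteq J$.
\item The $z$-ultrafilter argument correctly yields $h\in I$ with $Z(h)\cap Z_0=\emptyset$.
\item Any $k\in I$ with $0\le k\le 1$ and $k=1$ on $Z_0$ then gives $|f-fk|<u$.
\item Both directions of part (2) are sound.
\end{itemize}

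Three points should be tidied.

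\textbf{Step 4.} The \emph{main obstacle} is illusory, and the device you use to get around it is not well defined: $h+\mathbf{1}_{\text{dist}}$ means nothing in a Tychonoff space, where no metric is available. Since disjoint zero-sets are completely separated, take $s\in C(X)$ with $0\le s\le 1$, $s=1$ on $Z_0$ and $s=0$ on $Z(h)$, and put $k=(2s-1)\vee 0$. Then $0\le k\le 1$, $k=1$ on $Z_0$, and $k$ vanishes on the open set $\{s<\tfrac12\}\supseteq Z(h)$. So the function equal to $k/h$ off $Z(h)$ and to $0$ on $Z(h)$ is continuous, and $k\in(h)\subseteq I$. Even quicker is $k=h^2/(h^2+w^2)$ with $Z(w)=Z_0$. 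If you want to keep your $\min(1,2h/v)$, the positive unit $v=h+(1-s)$ does the job, since it equals $h$ on $Z_0$.

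\textbf{Step 2.} Citing 7Q inside a proof of 7Q is circular. Your step 2 does not actually need it; it uses only three standard facts:
\begin{itemize}
\item $Z[I]$ is closed under finite intersections;
\item every $z$-ultrafilter is $Z[M^p]$ for some $p\in\beta X$;
\item $Z[I]\subseteq Z[M^p]$ forces $I\subseteq M^p$, because $M^p$ is a $z$-ideal.
\end{itemize}

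\textbf{Part (2), converse.} The claim that $M_p$ is the only maximal ideal containing $O_p$ needs justification. For $q\in\beta X\setminus\{p\}$, take $g\in C(\beta X)$ vanishing on a neighbourhood of $p$ and equal to $1$ on a neighbourhood of $q$; then $g|_X\in O_p\setminus M^q$. Alternatively, by (1) every $m$-closed ideal is an intersection of maximal ideals, hence a $z$-ideal. The condition that every ideal be a $z$-ideal already characterises $P$-spaces (GJ 4J).
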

	The above theorem leads us to the following question.
	\begin{question}
		For an ideal $I$ in $C(X)$, can we characterize $cl_p(I)$ in terms of the maximal ideals on $C(X)$?
	\end{question}

	The next theorem gives an affirmative answer to the above question.
	\begin{theorem}\label{maximalClosureAll}
		For an ideal $I$ in $C(X)$, $cl_p(I)=\bigcap\{cl_p(M^p):p\in \beta X\text{ and }I\subseteq M^p\}.$
	\end{theorem}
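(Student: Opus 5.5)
The plan is to compute the right-hand side explicitly from the closures of maximal ideals already determined, and then compare it with the description of $cl_p(I)$ in Theorem \ref{tClosureAll}. Fix an ideal $I$ in $C(X)$. Since $I$ is proper, it lies in at least one maximal ideal, so the family $\{M^p\colon p\in\beta X,\ I\subseteq M^p\}$ is non-empty. I will split the indexing points $p$ according to whether $p\in X$ or $p\in\beta X\setminus X$.

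\textbf{Step 1 (the two kinds of terms).} For $p\in\beta X\setminus X$, the ideal $M^p$ is free, so $cl_p(M^p)=C(X)$ by Theorem \ref{tClosureM^p}. Such terms therefore contribute nothing to the intersection. For $p\in X$ we have $M^p=M_p$ (Gelfand--Kolmogoroff), and $M_p$ is closed by Theorem \ref{tClosureM_p}, so $cl_p(M^p)=M_p$. Hence
\[
\bigcap\{cl_p(M^p)\colon p\in\beta X,\ I\subseteq M^p\}=\bigcap\{M_p\colon p\in X,\ I\subseteq M_p\},
\]
with the convention that this equals $C(X)$ when no $p\in X$ satisfies $I\subseteq M_p$.

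\textbf{Step 2 (identify the index set).} For $p\in X$, the inclusion $I\subseteq M_p$ holds if and only if every $g\in I$ vanishes at $p$, that is, if and only if $p\in\bigcap Z[I]$. Consequently the right-hand side equals
\[
\bigcap_{p\in\bigcap Z[I]}M_p=\{f\in C(X)\colon \textstyle\bigcap Z[I]\subseteq Z(f)\}=M_{\bigcap Z[I]}.
\]

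\textbf{Step 3 (conclude).} By Theorem \ref{tClosureAll}, this set is exactly $cl_p(I)$, which gives the stated equality.

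The only point needing care is the degenerate case in which $I$ is free. There $\bigcap Z[I]=\emptyset$, every maximal ideal containing $I$ is free, and both sides equal $C(X)$; this agrees with Corollary \ref{cFree=Dense}. I expect no real obstacle beyond stating this empty-intersection convention cleanly and invoking $M^p=M_p$ for $p\in X$.
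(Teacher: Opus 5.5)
Your proof is correct and follows essentially the same route as the paper: both reduce to Theorem \ref{tClosureAll} using the fact that, for each $c\in\bigcap Z[I]$, the ideal $M_c=M^c$ contains $I$ and is closed. The only difference is cosmetic. You compute the right-hand side exactly, discarding the free $M^p$ because free maximal ideals are dense. The paper instead obtains $cl_p(I)\subseteq\bigcap cl_p(M^p)$ from monotonicity of closure, and uses the fixed terms only for the reverse inclusion.
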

	\begin{proof}
		If $I$ is free then nothing to proof.
		
		Let $I$ be fixed. Also assume that $P=\bigcap\{cl_p(M^p):p\in \beta X\text{ and }I\subseteq M^p\}$. Then it is easy to check that $cl_p(I)\subseteq P$. Let us choose, $g\in P$ and $c\in \bigcap Z[I]$. Hence, $I\subseteq M_c$ and therefore, $g\in cl_p(M_c)=M_c$. Consequently, we have $g(c)=0$, which implies that $g\in cl_p(I)$.
		
	\end{proof}
	In view of Theorem \ref{tClosureAll} and Theorem \ref{maximalClosureAll}, we have the following remark.
	\begin{remark}
		For an ideal $I$ in $C(X)$, we observe that	$$cl_p(I)=\{f\in C(X)\colon \bigcap Z[I]\subseteq Z(f) \}=\bigcap\{cl_p(M^p):p\in \beta X\text{ and }I\subseteq M^p\}=M_A$$ where $M_A=\begin{cases}
			\bigcap\limits_{x\in \bigcap Z[I]}M_x,\text{ when } I\text{ is fixed}\\
			C(X), \text{ when } I \text{ is free} 
		\end{cases}$.
	\end{remark}

	
	In light of Theorem \ref{tClosureAll}, we now realise the structure of $\bigcap Z[I+J]$, for any two ideals $I$ and $J$ in $C(X)$. It is routine to verify that for any two ideals $I$ and $J$ in $C(X)$, \[\bigcap Z[I+J]=\bigcap Z[I] \cap \bigcap Z[J]. \] Our focus is to deduce the relationship between $cl_p(I+J)$ and $cl_p(I)+cl_p(J)$. Since, for an ideal $I$ in $C(X)$, $cl_p(I)=M_{\bigcap Z[I]}$, we begin with the discussions involving the closure of sum of two closed ideals in $C_p(X)$. We raise the following question in this regard.
	
	\begin{question}
		If $I$ and $J$ are closed ideals in $C_p(X)$, then is $I+J$ also a closed ideal in $C_p(X)$?
	\end{question}
	
	Note that, for a pair of ideals $I,J$ in $C(X)$, $cl_p(I+J)=\{f\in C(X)\colon \bigcap Z[I] \cap \bigcap Z[J]\subseteq Z(f)  \} $. For a space $X$, we reserve the notation $\mathcal{I}$ to denote the family of all such closed ideals in $C_p(X)$ such that for any pair of distinct members $I,J\in \mathcal{I}$, $\bigcap Z[I]\cap \bigcap Z[J]=\emptyset$. So, for a pair of ideals $I,J$ in $\mathcal{I}$, $cl_p(I+J)=C(X)$. In this regard, we are able to characterise normal spaces using the sum of two ideals in $\mathcal{I}$.
	
	\begin{theorem} \label{tNormal_sum}
		A space $X$ is normal if and only if for any pair of ideals $I,J$ in $\mathcal{I}$, $I+J=C(X)$.
	\end{theorem}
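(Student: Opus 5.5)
Throughout, closed ideals are of the form $M_A$ with $A$ closed (Theorem \ref{cClosedIdeals}), and for a closed ideal $I$ we have $I=M_{A}$ with $A=\bigcap Z[I]$. Let us first check this. By Theorem \ref{tClosureAll}, $I=cl_p(I)=M_{\bigcap Z[I]}$, and $\bigcap Z[I]$ is closed. Hence two distinct members $I=M_A$ and $J=M_B$ of $\mathcal{I}$ correspond to closed sets $A,B$ with $A\cap B=\emptyset$. Conversely, for any disjoint closed sets $A,B$ with $M_A\neq M_B$, the ideals $M_A,M_B$ are proper and closed. They are proper as long as $A,B\neq\emptyset$, and for nonempty disjoint closed sets we automatically get $M_A\neq M_B$ via a point of $A\setminus B$ and complete regularity. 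So such a pair belongs to $\mathcal{I}$. The theorem therefore reduces to the following claim: $X$ is normal if and only if $M_A+M_B=C(X)$ for every pair of disjoint nonempty closed sets $A,B$.

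\textbf{Normal $\Rightarrow$ sum is everything.} Assume $X$ is normal and $A,B$ are disjoint closed sets. By Urysohn's lemma there is $h\in C(X)$ with $h(A)=\{0\}$ and $h(B)=\{1\}$. Then $h\in M_A$ and $\boldsymbol{1}-h\in M_B$, so $\boldsymbol{1}=h+(\boldsymbol{1}-h)\in M_A+M_B$. Hence $M_A+M_B=C(X)$. Here we must be careful with the phrase ``ideal'': by the paper's convention ideals are proper, so the members of $\mathcal{I}$ are proper, and the conclusion $I+J=C(X)$ is exactly the statement that the sum is improper.

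\textbf{Sum is everything $\Rightarrow$ normal.} Let $A,B$ be disjoint closed sets, which we may take nonempty. Then $M_A,M_B\in\mathcal{I}$, so $\boldsymbol{1}=f+g$ with $f\in M_A$ and $g\in M_B$. Put $h=f$. Then $h=0$ on $A$ and $h=\boldsymbol{1}-g=1$ on $B$. The sets $\{h<\tfrac12\}$ and $\{h>\tfrac12\}$ are disjoint open sets containing $A$ and $B$ respectively, so $X$ is normal. Equivalently, $A\subseteq Z(f)$ and $B\subseteq Z(g)$ with $Z(f)\cap Z(g)=\emptyset$, so disjoint closed sets are separated by disjoint zero-sets, which is \cite[Theorem 1.10/3D]{GJ1976}.

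\textbf{Main obstacle.} The substantive content is elementary; the delicate step is the bookkeeping in the first paragraph. We must justify that every pair of disjoint nonempty closed sets actually yields a pair in $\mathcal{I}$, which rests on $\bigcap Z[M_A]=A$ for closed $A$, by complete regularity. We must also handle the degenerate cases where $A$ or $B$ is empty, for which $M_\emptyset=C(X)$ is not a proper ideal and is excluded.
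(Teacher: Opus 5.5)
Your proposal is correct and follows the paper's argument. Both proofs identify the members of $\mathcal{I}$ with ideals $M_A,M_B$ for disjoint closed sets $A,B$, and both rest on the fact that $\boldsymbol{1}\in M_A+M_B$ exactly when $A$ and $B$ are completely separated.

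The differences are cosmetic. You write $\boldsymbol{1}=h+(\boldsymbol{1}-h)$ directly, where the paper builds the unit $f^2+g^2$. You argue the converse directly, where the paper uses the contrapositive. You also make explicit the check $\bigcap Z[M_A]=A$ for closed $A$, which is needed so that $M_A,M_B$ actually lie in $\mathcal{I}$; the paper leaves this implicit.
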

	
	\begin{proof}
		Suppose $X$ is normal and $I,J\in \mathcal{I}$.  Since $I$ and $J$ are closed ideals, there exist closed sets $A,B$ in $X$ such that $I=M_A$ and $J=M_B$ with $A\cap B=\emptyset$. By normality of $X$, there exists $f\in C(X)$ such that $f(A)=\{0\}$ and $f(B)=\{1\}$. Again, $Z(f)$ and $B$ are disjoint closed sets. So, there exists $g\in C(X)$ such that $g(Z(f))=\{1\}$ and $g(B)=\{0\}$. It follows that $f\in M_A$ and $g\in M_B$ with $Z(f)\cap Z(g)=\emptyset$. So, $f^2+g^2$ is a unit in $M_A+M_B$. It follows that $I+J=C(X)$.
		
		Conversely, suppose $X$ is not a normal space. Then there exists closed sets $A,B$ in $X$ which are not completely separated. This ensures that $\boldsymbol{1}\notin M_A+M_B$. Indeed, if $\boldsymbol{1}=f+g$ with $f\in M_A$ and $g\in M_B$, then $f(A)=\{0\}$ and $f(B)=\{1\}$, which ensures that $A$ and $B$ are completely separated sets. Therefore, $I+J\subsetneqq C(X)$.
	\end{proof}
	
	The following result is a direct consequence of the above theorem.
	
	\begin{corollary}
		A space $X$ is normal if and only if for any pair of ideals $I,J$ in $\mathcal{I}$, $I+J$ is closed in $C_p(X)$.
	\end{corollary}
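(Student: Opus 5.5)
The corollary will follow from Theorem \ref{tNormal_sum} once we note that, for $I,J\in\mathcal{I}$, the condition ``$I+J$ is closed in $C_p(X)$'' is equivalent to ``$I+J=C(X)$''. So the plan is to establish this equivalence and then quote the theorem in both directions.

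First I would show that $I+J$ is always dense for $I,J\in\mathcal{I}$. By Theorem \ref{tClosureAll} and the identity $\bigcap Z[I+J]=\bigcap Z[I]\cap\bigcap Z[J]$, we get
\[
cl_p(I+J)=\{f\in C(X)\colon \bigcap Z[I]\cap \bigcap Z[J]\subseteq Z(f)\}.
\]
If $I\neq J$, the defining property of $\mathcal{I}$ makes this intersection empty, so $cl_p(I+J)=C(X)$. Equivalently, $I+J$ is a free ideal (or the improper ideal), and Corollary \ref{cFree=Dense} gives density. The degenerate case $I=J$ has to be handled separately. There $I+J=I$, which is closed, so it falls outside the ``distinct members'' clause. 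I would read the statement, as the paper's definition of $\mathcal{I}$ intends, as concerning pairs of distinct members of $\mathcal{I}$, and I would add a remark saying so.

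Next I would deduce the equivalence. If $X$ is normal, Theorem \ref{tNormal_sum} gives $I+J=C(X)$, which is trivially closed. Conversely, suppose $I+J$ is closed for every pair of distinct $I,J\in\mathcal{I}$. Then $I+J=cl_p(I+J)=C(X)$, and Theorem \ref{tNormal_sum} shows that $X$ is normal.

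The only delicate point is the convention issue. Throughout, ``ideal'' means a proper ideal, while $C(X)$ itself is closed in $C_p(X)$. Once we agree that $I+J$ may equal $C(X)$, so that ``closed'' includes the whole space, the argument is immediate.
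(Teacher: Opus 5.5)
Your proposal is correct and is essentially the paper's argument. The paper observes, just before Theorem \ref{tNormal_sum}, that $cl_p(I+J)=C(X)$ for $I,J\in\mathcal{I}$ (via Theorem \ref{tClosureAll} and $\bigcap Z[I+J]=\bigcap Z[I]\cap\bigcap Z[J]$), so $I+J$ is closed exactly when $I+J=C(X)$, and it then reads the corollary off from Theorem \ref{tNormal_sum}; your handling of the distinct-pair reading (for $I=J$ the sum is $I$, already closed) and of $C(X)$ counting as closed matches what the paper does implicitly.
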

	
	The question remains: what can be said about the closed ideals of $C_p(X)$ that do not belong to $\mathcal{I}$? We answer this question for a specific class of topological spaces.
	
	\begin{theorem} \label{tPerfectly_normal}
		Suppose $X$ is a perfectly normal space. For any pair, $I,J$ of closed ideals in $C_p(X)$, $I+J$  is closed in $C_p(X)$.
	\end{theorem}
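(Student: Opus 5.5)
By Theorem \ref{cClosedIdeals}, there are closed sets $A,B\subseteq X$ with $I=M_A$ and $J=M_B$. Clearly $M_A+M_B\subseteq M_{A\cap B}$, and $M_{A\cap B}$ is closed in $C_p(X)$ by Lemma \ref{lM_A}. So the plan is to prove the reverse inclusion $M_{A\cap B}\subseteq M_A+M_B$. This gives $I+J=M_{A\cap B}$, which is closed.

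The key input is perfect normality: every closed set in $X$ is a zero set. So we may choose $a,b\in C(X)$ with $a,b\geq \boldsymbol{0}$, $Z(a)=A$ and $Z(b)=B$. Then $Z(a+b)=A\cap B$, and $a+b>0$ on the open set $X\setminus(A\cap B)$.

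Given $f\in M_{A\cap B}$, define $g,h\colon X\to\mathbb{R}$ by
\[ g=\frac{fb}{a+b},\quad h=\frac{fa}{a+b}\ \text{ on } X\setminus(A\cap B),\qquad g=h=0 \text{ on } A\cap B. \]
Then $g+h=f$ everywhere, since $f$ vanishes on $A\cap B$. Moreover $g$ vanishes on $B$, because $b=0$ there, and $h$ vanishes on $A$. Hence $g\in M_B$ and $h\in M_A$, provided $g$ and $h$ are continuous.

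Continuity is the only delicate step. On the open set $X\setminus(A\cap B)$ both functions are quotients of continuous functions by a nonvanishing continuous function, so they are continuous there. At a point $x_0\in A\cap B$ we use the estimates $|g|\leq|f|$ and $|h|\leq|f|$, valid since $a,b\geq 0$. Together with $f(x_0)=0$ and continuity of $f$, these give continuity of $g$ and $h$ at $x_0$.

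Therefore $f=g+h\in M_A+M_B$, which yields $I+J=M_{A\cap B}$, a closed ideal. If $A\cap B=\emptyset$ the same argument applies: $a+b$ is then a unit and $I+J=C(X)$, consistent with Theorem \ref{tNormal_sum}.
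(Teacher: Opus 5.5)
Your proof is correct and takes the same route as the paper: write $I=M_A$ and $J=M_B$, use perfect normality to realise $A$ and $B$ as zero-sets, and identify $I+J$ with the closed ideal $M_{A\cap B}=M_{Z(f^2+g^2)}$. The paper only asserts $M_A+M_B=M_{Z(f^2+g^2)}$, whereas you prove the nontrivial inclusion $M_{A\cap B}\subseteq M_A+M_B$ via the decomposition $f=\frac{fb}{a+b}+\frac{fa}{a+b}$ (the standard argument that a sum of $z$-ideals is a $z$-ideal), so your write-up supplies the step the paper leaves implicit.
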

	
	\begin{proof}
		Suppose $I$ and $J$ are closed ideals in $C_p(X)$. Then there exist closed subsets, $A,B$ of $X$ such that $I=M_A$ and $J=M_B$. Since $X$ is perfectly normal, there exist $f,g\in C(X)$ such that $A=Z(f)$ and $B=Z(g)$. It then follows that $M_A+M_B=M_{Z(f^2+g^2)}$, which is closed in $C_p(X)$.
	\end{proof}
	
	The converse of the above theorem remains an unanswered question.

	\section{Closure of subrings and some other special subsets of $C_p(X)$}
	
	Denote the set of all units in $C(X)$ by $U(X)$ and the set of all divisors of zero in $C(X)$, along with $\boldsymbol{0}$, by $D(X)$. We begin this section by determining the closures of these two subsets of $C_p(X)$. We first realise that the set $D(X)$ is either closed or dense in $C_p(X)$.
	
	\begin{theorem} \label{tD(X)Closure}
		The closure of $D(X)$, in the space $C_p(X)$ is given by: \[cl_p(D(X))=\begin{cases}
			C_p(X), &when\;X\;is\;infinite \\
			D(X), &when\;X\;is\;finite
		\end{cases}. \] Consequently, $D(X)$ is either closed or dense in $C_p(X)$.
	\end{theorem}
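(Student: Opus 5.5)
Note first that $D(X)$ is exactly the set of $f\in C(X)$ with $int\,Z(f)\neq\emptyset$, together with $\boldsymbol{0}$. Indeed, if $fg=\boldsymbol{0}$ with $g\neq\boldsymbol{0}$, then $X\setminus Z(g)$ is a non-empty open set inside $Z(f)$. Conversely, if $U=int\,Z(f)\neq\emptyset$, pick $x\in U$ and use complete regularity to get $g\in C(X)$ with $g(x)=1$ and $g(X\setminus U)=\{0\}$; then $fg=\boldsymbol{0}$ with $g\neq\boldsymbol{0}$. Both cases of the statement will be handled through this description.

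\emph{Finite case.} A finite Tychonoff space is $T_1$, hence discrete, so $C(X)=\mathbb{R}^X$ and $C_p(X)$ is $\mathbb{R}^n$ with the usual topology. Here $f\in D(X)$ if and only if $f$ vanishes at some point, so $D(X)=\bigcup_{x\in X}M_x$. By Theorem \ref{tClosureM_p} this is a finite union of closed sets, hence closed, and $cl_p(D(X))=D(X)$. It is also a proper subset, since $\boldsymbol{1}\notin D(X)$, so $D(X)$ is not dense.

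\emph{Infinite case.} We show that every basic open set $B(f,x_1,\dots,x_n,\epsilon)$ meets $D(X)$. Since $X$ is infinite, choose a point $y\in X\setminus\{x_1,\dots,x_n\}$. As $X$ is Hausdorff, there is an open set $V\ni y$ whose closure misses every $x_i$. By complete regularity, take $h\in C(X)$ with $0\le h\le 1$, $h(y)=1$ and $h(X\setminus V)=\{0\}$, and put $W=\{x: h(x)>1/2\}$, a non-empty open set. Let $k=\min\{1,2(1-h)\}$, which is continuous, equals $1$ on $X\setminus V$ (in particular at every $x_i$), and vanishes on $W$. Then $g=fk$ satisfies $g(x_i)=f(x_i)$ for all $i$ and $W\subseteq Z(g)$. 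So $g\in D(X)\cap B(f,x_1,\dots,x_n,\epsilon)$, and $D(X)$ is dense.

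The only step needing care is producing a function that is forced to vanish on a non-empty open set while fixing the prescribed finitely many values. The truncation $\min\{1,2(1-h)\}$ achieves this. The final ``consequently'' is immediate from the two cases.
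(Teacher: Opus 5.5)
There is a concrete error in exactly the step you flagged as the delicate one: the function $k=\min\{1,2(1-h)\}$ does not vanish on $W$. For $x\in W$ you have $h(x)>1/2$, so $k(x)=2(1-h(x))$, and this is zero only where $h(x)=1$. Thus $Z(k)=\{x\colon h(x)=1\}$, which can have empty interior. For a concrete failure, take $X=\mathbb{R}$, $f=\boldsymbol{1}$, $x_1=0$, $y=5$, $V=(4,6)$ and $h(t)=\max\{0,1-|t-5|\}$. Then $g=fk=k$ satisfies $g(0)=1=f(0)$, but $Z(g)=\{5\}$. So $\mathrm{int}\,Z(g)=\emptyset$, and by your own characterisation $g\notin D(\mathbb{R})$.

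The truncation is on the wrong side: you cut off at $1$ from above, but you need a cut-off at $0$ from below. Replace $k$ by $k=\max\{0,1-2h\}$. This is continuous and equals $1$ wherever $h=0$, in particular at each $x_i$, since $x_i\notin V$. It also vanishes on $\{x\colon h(x)\geq 1/2\}\supseteq W$. Then $g=fk$ agrees with $f$ at every $x_i$ and $W\subseteq \mathrm{int}\,Z(g)$, so $g\in D(X)\cap B(f,x_1,\dots,x_n,\epsilon)$.

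With this repair your argument is correct and is essentially the paper's. The paper also picks $y$ outside the finite set $F$ and a closed neighbourhood $\overline{U}$ of $y$ missing $F$. It then takes some $g\in C(X)$ that agrees with $f$ on $F$ and vanishes on $\overline{U}$, asserting its existence without construction. Its extra reductions to $f\notin D(X)$ and $y\notin Z(f)$ are not needed. Your finite case, via $D(X)=\bigcup_{x\in X}M_x$ and Theorem \ref{tClosureM_p}, amounts to the paper's identification of $D(X)$ with $\{(x_1,\dots,x_n)\colon x_1x_2\cdots x_n=0\}\subseteq\mathbb{R}^n$.
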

	
	\begin{proof}
		First suppose that $X$ is an infinite set. Consider $f\in C(X)\setminus D(X)$ and a basic open neighborhood $B(f,F,\epsilon)$ of $f$ in $C_p(X)$, where $F$ is a finite subset of $X$ and $\epsilon$ is a positive real number. If $f(F)=\{0\}$, then $\boldsymbol{0}\in B(f,F,\epsilon)\cap D(X)$. Suppose that there exists $x_0\in F$ with $f(x_0)\neq 0$. Note that as $f\notin D(X)$, $\overline{X\setminus D_f}=X$. This ensures that there exists $y\in X\setminus (Z(f)\cup F)$. By regularity of $X$, there exists an open set $U$ in $X$ such that $y\in U\subseteq \overline{U}\subseteq X\setminus (Z(f)\cup F)$. Again, there exists $g\in C(X)$ satisfying $g(\overline{U})=\{0\}$ and $g(x)=f(x)$, for all $x\in F$. It follows that $g\in B(f,F,\epsilon)\cap D(X)$. This ensures that $D(X)$ is dense in $C_p(X)$.
		
		Now, if $X$ is finite, then $C_p(X)$ is homeomorphic to $\mathbb{R}^n$, endowed with the Euclidean topology, where $n=|X|$. Also,  $D(X)=\{ (x_1,x_2,\cdots, x_n)\colon x_1x_2\cdots x_n=0 \}$, which is closed in $\mathbb{R}^n$.
	\end{proof}
	
	The following corollary is an immediate consequence to the above result.
	
	\begin{corollary}
	\	\begin{enumerate}[label=\arabic*.]
		\item $D(X)$ is dense in $C_p(X)$ if and only if $X$ is infinite.
		\item  $D(X)$ is closed in $C_p(X)$ if and only if $X$ is finite.
	\end{enumerate}
	\end{corollary}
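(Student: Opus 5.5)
The corollary follows from Theorem \ref{tD(X)Closure} once we know that $D(X)$ is never both dense and closed. I will treat the two items separately.

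For item 1, if $X$ is infinite, Theorem \ref{tD(X)Closure} gives $cl_p(D(X))=C_p(X)$, so $D(X)$ is dense. For the converse, suppose $X$ is finite. Then Theorem \ref{tD(X)Closure} gives $cl_p(D(X))=D(X)$. So it suffices to show $D(X)\neq C(X)$, and this is witnessed by the unit $\boldsymbol{1}$. Indeed, $\boldsymbol{1}g=\boldsymbol{0}$ forces $g=\boldsymbol{0}$, so $\boldsymbol{1}$ is not a divisor of zero. Also $\boldsymbol{1}\neq\boldsymbol{0}$ because $X\neq\emptyset$. Hence $D(X)$ is a proper closed subset and is not dense.

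For item 2, if $X$ is finite, Theorem \ref{tD(X)Closure} directly gives that $D(X)$ is closed. Conversely, if $X$ is infinite, then $cl_p(D(X))=C(X)$. Since $\boldsymbol{1}\notin D(X)$ by the argument above, we get $cl_p(D(X))\neq D(X)$, so $D(X)$ is not closed.

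There is essentially no obstacle here. The only point needing care is the observation that $\boldsymbol{1}\notin D(X)$, which rules out $D(X)$ being simultaneously dense and closed. This uses only $X\neq\emptyset$, which holds since $|X|\geq 2$ is assumed throughout.
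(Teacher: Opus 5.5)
Your proposal is correct and follows the paper's route: the paper treats this corollary as an immediate consequence of Theorem \ref{tD(X)Closure}. You also make explicit the one point the paper leaves unstated, namely that $\boldsymbol{1}\notin D(X)$, so $D(X)\neq C(X)$ can never be both dense and closed, which gives the two converse directions.
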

	
	
	We are now concerned about the closure of $U(X)$ in $C_p(X)$. Consider $X$ to be the real line and $f\colon \mathbb{R}\longrightarrow \mathbb{R}$ be a function in $C(\mathbb{R})$ such that there exists $x,y\in \mathbb{R}$ with $f(x)<0$ and $f(y)>0$. Then, for any $\delta$ satisfying $0<\delta<\frac{1}{2}\min \{f(x),f(y) \}$, we have $B(f,x,y,\delta)\cap U(X)=\emptyset$. The significant factor here is that $\mathbb{R}$ is a connected topological space. Moreover, if $X$ is a topological space having a component $C$ with the property that $|C|\geq 2$ (that is, if $X$ is not totally disconnected), then similar arguments can be used to ensure that $U(X)$ is not dense in $C_p(X)$. However, the converse of this observation is not true. On this note, we recall that for a point $x\in X$, the \textit{quasicomponent} of $x$ is defined as the intersection of all clopen neighbourhoods of $x$ in $X$. Furthermore, a  topological space $X$ is said to be \textit{totally separated} if the quasicomponents in $X$ are precisely the singleton sets. We establish a connection between the denseness of $U(X)$ in $C_p(X)$ and the property of $X$ being totally separated.
	
	\begin{theorem} \label{tTotallySeparated}
		$U(X)$ is dense in $C_p(X)$ if and only if $X$ is totally separated.
	\end{theorem}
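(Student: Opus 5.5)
We prove the two implications separately. For sufficiency, assume $X$ is totally separated and take a basic open set $B(f,x_1,x_2,\cdots,x_n,\epsilon)$ with the $x_i$ distinct. The goal is a unit $u$ with $|u(x_i)-f(x_i)|<\epsilon$ for every $i$, which we build from clopen sets.

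The first step is to separate each pair $x_i\neq x_j$ by a clopen set. Since the quasicomponent of $x_i$ is $\{x_i\}$, some clopen set $W_{ij}$ contains $x_i$ and misses $x_j$. Then $V_i=\bigcap_{j\neq i}W_{ij}$ is a clopen neighbourhood of $x_i$ that misses every other $x_j$. We then disjointify: put $U_1=V_1$, $U_i=V_i\setminus(V_1\cup\cdots\cup V_{i-1})$ for $2\le i\le n$, and $U_0=X\setminus\bigcup_i V_i$. These are pairwise disjoint clopen sets covering $X$, and $x_i\in U_i$ because $x_i\notin V_j$ for $j\neq i$.

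Next, pick nonzero reals $c_i$ with $|c_i-f(x_i)|<\epsilon$; take $c_i=f(x_i)$ unless $f(x_i)=0$, in which case take $c_i=\epsilon/2$. Define $u=c_i$ on $U_i$ for $1\le i\le n$ and $u=1$ on $U_0$. The function $u$ is locally constant on a finite clopen partition, so it is continuous. It never vanishes, so $u\in U(X)$, and by construction $u\in B(f,x_1,x_2,\cdots,x_n,\epsilon)$. Hence $U(X)$ is dense.

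For necessity, suppose $X$ is not totally separated. Then some quasicomponent contains two distinct points $x\neq y$, so no clopen set contains $x$ and misses $y$. By complete regularity there is $f\in C(X)$ with $f(x)=-1$ and $f(y)=1$. Consider the neighbourhood $B(f,x,y,\tfrac12)$. Any $g$ in it satisfies $g(x)<0<g(y)$. If such a $g$ were a unit, the set $\{z\colon g(z)>0\}=\{z\colon g(z)\ge0\}$ would be clopen, contain $y$, and miss $x$. That is a contradiction, so $B(f,x,y,\tfrac12)\cap U(X)=\emptyset$ and $U(X)$ is not dense. This is the same sign argument the paper sketched for $\mathbb{R}$, now phrased with quasicomponents instead of components.

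The only step needing care is building finitely many pairwise disjoint clopen neighbourhoods from pairwise clopen separations. The intersection-then-disjointify construction above handles it; everything else is routine.
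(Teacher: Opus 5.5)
Your proof is correct and follows the paper's argument essentially step for step: for sufficiency, a nonvanishing function that is constant on finitely many disjoint clopen neighbourhoods of the $x_i$; for necessity, the observation that a unit in $B(f,x,y,\frac{1}{2})$ would yield a clopen set separating two points of the same quasicomponent. The only difference is that you explicitly construct the pairwise disjoint clopen sets $U_i$ by intersecting and then disjointifying, a step the paper simply asserts.
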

	
	\begin{proof}
		Let $X$ be totally separated and consider a basic open set $B(f,x_1,x_2,\cdots,x_n,\epsilon)$ in $C_p(X)$. Then, there exist $n$ pairwise disjoint clopen subsets, $U_1,U_2,\cdots,U_n$ of $X$ such that $x_i\in U_i$ for all $i\in \{1,2,\cdots,n\}$. For each  $i\in \{1,2,\cdots,n\}$, choose $r_i\in (f(x_i)-\epsilon,f(x_i)+\epsilon)\setminus \{0\}$. Define $g\colon X\longrightarrow \mathbb{R}$ as $g(x)=\begin{cases}
			r_i &when\;x\in U_i\;i\in \{1,2,\cdots,n\} \\
			1 &when\;x\in X\setminus \bigcup\limits_{i=1}^nU_i
		\end{cases}.$ See that $g\in B(f,x_1,x_2,\cdots,x_n,\epsilon)\cap U(X)$.
		
		Conversely, suppose there exist a pair of distinct points $x,y$ in $X$ with the property that they lie in the same quasicomponent. Now, there exists $f\colon X\longrightarrow [-1,1]$ such that $f(x)=-1$ and $f(y)=1$. Then $B(f,x,y,\frac{1}{2})\cap U(X)=\emptyset$. Indeed if there exists a unit $u\in B(f,x,y,\frac{1}{2})$, then $u(x)<0$ and $u(y)>0$. It follows that the set $\{t\in X\colon f(t)>0 \}=\{t\in X\colon f(t)\geq 0 \}$ is a clopen set containing $y$ but missing $x$.
	\end{proof}
	
	The following corollary is an immediate consequence of the facts that every zero-dimensional space is totally separated and every extremally disconnected space is totally separated. Although this is a known fact, we shall prove it as a lemma. This will provide us with abundance of examples of spaces for which $U(X)$ shall be dense in $C_p(X)$. 
	
	\begin{lemma}
	\	\begin{enumerate}[label=\arabic*.]
		\item A zero-dimensional space is totally separated.
		\item An extremally disconnected space is totally separated.
	\end{enumerate}
	\end{lemma}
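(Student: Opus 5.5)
Both parts reduce to one target: given distinct points $x\neq y$ in $X$, produce a clopen set containing $x$ but not $y$. Then $y$ lies outside the quasicomponent of $x$. Since $y$ was arbitrary, that quasicomponent is $\{x\}$, which is exactly total separatedness.

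\textbf{Zero-dimensional case.} Here $X$ has a base of clopen sets, and we assume $X$ is $T_1$, as every Tychonoff space is. Given $x\neq y$, the set $X\setminus\{y\}$ is an open neighbourhood of $x$. So some basic clopen set $U$ satisfies $x\in U\subseteq X\setminus\{y\}$, and $U$ is the required clopen set. This part is immediate.

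\textbf{Extremally disconnected case.} By definition, the closure of every open set is open. Take $x\neq y$. Since $X$ is Tychonoff, hence Hausdorff, choose disjoint open sets $V\ni x$ and $W\ni y$. Put $U=\overline{V}$; it is closed, and by extremal disconnectedness it is also open, so $U$ is clopen and contains $x$. It remains to check that $y\notin U$. Since $V\cap W=\emptyset$ and $W$ is open, $V\subseteq X\setminus W$, which is closed, so $\overline{V}\cap W=\emptyset$. Hence $y\notin U$.

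\textbf{Main point of care.} No step is a real obstacle. The only thing to watch in the second part is to take the closure of the open set $V$, not of an arbitrary set, so that extremal disconnectedness applies. In the first part the one subtlety is that the $T_1$ (indeed Tychonoff) hypothesis is needed to make $X\setminus\{y\}$ open.
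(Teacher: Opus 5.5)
Your proof is correct and follows essentially the same route as the paper. In both parts you separate $x$ from $y$ by a clopen set: a basic clopen set inside $X\setminus\{y\}$ in the zero-dimensional case, and the closure $\overline{V}$ of an open neighbourhood of $x$ disjoint from a neighbourhood of $y$ in the extremally disconnected case. The only difference is that you argue directly, whereas the paper argues by contradiction.
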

	
	\begin{proof}
	\	\begin{enumerate}[label=\arabic*.]
		\item Suppose X is a zero-dimensional space and $x\in X$. If possible, let the quasicomponent $Q_x$ of $x$ contains $y\neq x$. Then, by zero-dimensionality, there exists pair of disjoint clopen sets $B_1,B_2$ such that $x\in B_1$ and $y\in B_2$. This contradicts that $y\in Q_x$.

		\item Let $X$ be an extremally disconnected space. If possible, let there exist a quasicomponent $Q$ in $X$ containing two distinct points $x$ and $y$. Then, there exist a pair of disjoint open sets $U$ and $V$ in $X$, containing $x$ and $y$ respectively. Since $X$ is an extremally disconnected space, $\overline{U}$ is open, and hence clopen, in $X$. Therefore, $y\in \overline{U}$. But $V$ is an open neighbourhood of $y$ with $U\cap V=\emptyset$, which is a contradiction.
	\end{enumerate}
	\end{proof}
	
	\begin{corollary}
		If $X$ is zero-dimensional (or, extremally disconnected), then $U(X)$ is dense in $C_p(X)$.
	\end{corollary}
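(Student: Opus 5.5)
The corollary follows by combining Theorem \ref{tTotallySeparated} with the lemma just proved. If $X$ is zero-dimensional, or extremally disconnected, the lemma says $X$ is totally separated. The sufficiency direction of Theorem \ref{tTotallySeparated} then gives that $U(X)$ is dense in $C_p(X)$. So the proof is a two-line chaining of these two results.

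Since that direction of Theorem \ref{tTotallySeparated} carries all the weight, I would first check that its hypothesis is really what the lemma delivers. The construction there needs the following: for finitely many distinct points $x_1,\dots,x_n$, there are pairwise disjoint clopen sets $U_i\ni x_i$.

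In the zero-dimensional case I would obtain these sets directly.
\begin{enumerate}[label=(\roman*)]
\item For each pair $i\neq j$, Tychonoffness separates $x_i$ from $x_j$ by an open set, and zero-dimensionality shrinks it to a clopen set $W_{ij}$ with $x_i\in W_{ij}$ and $x_j\notin W_{ij}$.
\item Put $V_i=\bigcap_{j\neq i}W_{ij}$, a clopen set containing $x_i$ and none of the other points.
\item Disjointify by setting $U_i=V_i\setminus\bigcup_{j<i}V_j$. These are clopen, pairwise disjoint, and still contain $x_i$, because $x_i\notin V_j$ for $j\neq i$.
\end{enumerate}

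In the extremally disconnected case the same disjointification works, with different separating sets. Take disjoint open sets $U\ni x_i$ and $V\ni x_j$. Then $\overline{U}$ is clopen, contains $x_i$, and misses $x_j$. Steps (ii) and (iii) then go through unchanged.

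With these $U_i$ fixed, define $g$ to equal a nonzero $r_i$ near $f(x_i)$ on $U_i$ and $1$ elsewhere. This $g$ is a continuous unit in the basic neighbourhood $B(f,x_1,\dots,x_n,\epsilon)$, which gives density.

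There is no genuine obstacle here. The only point needing care is the passage from pairwise separation of points by clopen sets to a finite pairwise disjoint clopen family, handled in step (iii) above, and it uses only that finite intersections, unions and differences of clopen sets are clopen.
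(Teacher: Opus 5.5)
Your proposal is correct and matches the paper's argument: the corollary is obtained by chaining the lemma (zero-dimensional and extremally disconnected spaces are totally separated) with the sufficiency direction of Theorem~\ref{tTotallySeparated}. Your explicit disjointification $U_i=V_i\setminus\bigcup_{j<i}V_j$ is a useful addition, since it justifies the existence of pairwise disjoint clopen sets $U_i\ni x_i$, which the paper's proof of that theorem asserts without comment.
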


	One may note from the proof of Theorem \ref{tTotallySeparated} that if a quasicomponent of $X$ has two distinct points $x,y$, any function $f\in C(X)$ with $f(x)<0$ and $f(y)>0$ fails to be in the closure of $U(X)$ in $C_p(X)$. Building from this, we present our next observation regarding the closure of $U(X)$ in $C_p(X)$. We describe a few notations, that shall be prevalent throughout the article.
	\begin{notations}
		\		\begin{itemize}
			\item $\mathcal{Q}$ denotes the collection of all quasicomponents of $X$.
			\item For each $A\subseteq X$, $A^+=\{f\in C(X)\colon f|_A\geq \boldsymbol{0}\;on\;X \}$ and $A^-=\{f\in C(X)\colon f|_A\leq \boldsymbol{0}\;on\;X \}$.
		\end{itemize}
	\end{notations}
	
	\begin{theorem} \label{tclosureUX}
		The closure of the collection of all units, $U(X)$ in $C(X)$  is given by \[cl_p(U(X))=\bigcap\limits_{Q\in \mathcal{Q}}(Q^+\cup Q^-).\] 
	\end{theorem}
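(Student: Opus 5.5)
The plan is to prove the two inclusions separately. First note what membership in the right-hand side means: $f\in Q^+\cup Q^-$ says that $f$ does not take values of strictly opposite signs on $Q$. So $f\notin\bigcap_{Q\in\mathcal{Q}}(Q^+\cup Q^-)$ exactly when some quasicomponent $Q$ contains points $x,y$ with $f(x)<0<f(y)$.

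For $cl_p(U(X))\subseteq\bigcap_{Q\in\mathcal{Q}}(Q^+\cup Q^-)$, I will argue by contraposition, reusing the converse half of the proof of Theorem \ref{tTotallySeparated}. Suppose $x,y\in Q$ with $f(x)<0<f(y)$, and put $\delta=\min\{|f(x)|,f(y)\}$. I claim $B(f,x,y,\delta)\cap U(X)=\emptyset$. Indeed, any unit $u$ in this neighbourhood satisfies $u(x)<0<u(y)$. Since $u$ never vanishes, $\{t\in X\colon u(t)>0\}=\{t\in X\colon u(t)\geq 0\}$ is clopen. It contains $y$ but not $x$, which contradicts the fact that $x$ and $y$ lie in the same quasicomponent. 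Hence $f\notin cl_p(U(X))$.

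For the reverse inclusion, fix $f$ in the intersection and a basic neighbourhood $B(f,x_1,\dots,x_n,\epsilon)$. The first step is to produce a finite partition of $X$ into clopen sets $U_1,\dots,U_m$ such that each $U_j$ contains points $x_i$ from only one quasicomponent. For each pair $x_i,x_k$ lying in different quasicomponents, choose a clopen set containing $x_i$ but not $x_k$; this exists by the definition of the quasicomponent of $x_i$. Only finitely many clopen sets are chosen, so the atoms of the Boolean algebra they generate form a finite clopen partition of $X$. By construction, any two of the $x_i$ in a common atom lie in the same quasicomponent.

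Next, for each $j$ the points $x_i\in U_j$ lie in a single quasicomponent $Q$, and $f\in Q^+\cup Q^-$. So we can pick a sign $s_j\in\{1,-1\}$ with $s_jf(x_i)=|f(x_i)|$ for all $x_i\in U_j$; take $s_j=1$ if $U_j$ contains none of the $x_i$. Choose $h\in C(X)$ with $h(x_i)=f(x_i)$ for all $i$, which is possible since $X$ is Tychonoff and the points are finitely many. Fix $0<\eta<\epsilon$ and define $g=s_j(|h|+\eta)$ on $U_j$. Then $g$ is continuous because the $U_j$ form a clopen partition, and $|g|\geq\eta$, so $g\in U(X)$. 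Moreover, for $x_i\in U_j$ we get $g(x_i)=s_j|f(x_i)|+s_j\eta=f(x_i)+s_j\eta$, so $g\in B(f,x_1,\dots,x_n,\epsilon)$. This shows $f\in cl_p(U(X))$.

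The only delicate step is the clopen partition separating the distinct quasicomponents among the finitely many points. The atoms argument should handle it cleanly, and the remaining steps are routine.
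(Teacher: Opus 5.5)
Your proof is correct and follows essentially the same route as the paper. For $\bigcap_{Q\in\mathcal{Q}}(Q^+\cup Q^-)\subseteq cl_p(U(X))$, the paper also separates the finitely many quasicomponents meeting $\{x_1,\dots,x_n\}$ by pairwise disjoint clopen sets, sets $u=\pm(|f|+\epsilon/2)$ on them, and puts $u=1$ elsewhere. For the other inclusion, it shows $U(X)\subseteq Q^+\cup Q^-$ and that each $Q^+\cup Q^-$ is closed, using the same neighbourhood $B(f,x,y,\delta)$ as you. Your Boolean-algebra atoms argument justifies the existence of the disjoint clopen sets, which the paper only asserts. Your auxiliary $h$ is unnecessary, since $h=f$ already works.
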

	
	\begin{proof}
		Let $f\in \bigcap\limits_{Q\in \mathcal{Q}}(Q^+\cup Q^-)$. Consider a basic open neighbourhood $B(f,x_1,x_2,\cdots,x_n,\epsilon)$ of $f$ in $C_p(X)$. Suppose $Q_1,Q_2,\cdots,Q_k$ is the collection of distinct quasicomponents in $X$ such that each $Q_i$ contains at least one point of $\{x_1,x_2,\cdots,x_n\}$. It follows that there exists a collection of pairwise disjoint clopen sets $U_1,U_2,\cdots,U_k$ in $X$ such that $U_i\supseteq Q_i$, for all $i\in \{1,2,\cdots,k \}$. Define $u\colon X\longrightarrow \mathbb{R}$ as $$u(x)=\begin{cases}
			|f(x)|+\frac{\epsilon}{2}, &if\;x\in U_i\;and\;f|_{Q_i}\geq \boldsymbol{0}\;on\;X \\
			-|f(x)|-\frac{\epsilon}{2}, &if\;x\in U_i\;and\;f|_{Q_i}\leq \boldsymbol{0}\;on\;X \\
			1, &if\;x\in X\setminus \bigcup\limits_{i=1}^kU_i
		\end{cases}.$$ It is evident that $u\in C(X)$. Moreover, $u\in U(X)$. Also, $|f(x_i)-u(x_i)|=\epsilon/2$ for each $i\in \{1,2,\cdots,k\}$ and so $u\in U(X)\cap B(f,x_1,x_2,\cdots,x_n,\epsilon)$. It follows that $u\in cl_p(U(X))$. 
		
		To establish the reverse inclusion, it suffices to prove the following two statements:
		\begin{enumerate}[label=\arabic*.]
			\item $U(X)\subseteq Q^+\cup Q^-$, for any quasicomponent $Q\in \mathcal{Q}$, and
			\item For each quasicomponent $Q\in \mathcal{Q}$, $Q^+\cup Q^-$ is a closed subset of $C_p(X)$.
		\end{enumerate}  First suppose $u\in U(X)$ and $Q\in \mathcal{Q}$. If possible, let there exists $x,y\in Q$ such that $u(x)<0<u(y)$. Then the sets $V_x=\{t\in X\colon u(t)<0 \}=\{t\in X\colon u(t)\leq 0 \}$ and $V_y=\{t\in X\colon u(t)>0 \}=\{t\in X\colon u(t)\geq 0 \}$ are disjoint clopen neighbourhoods of $x$ and $y$ respectively. This contradicts that $x$ and $y$ lie in the same quasicomponent. It therefore ensures that $U(X)\subseteq Q^+\cup Q^-$, for any quasicomponent $Q\in \mathcal{Q}$. Finally, suppose $f\in C(X)$ and $Q\in \mathcal{Q}$ is such that there exists $x,y\in Q$ with $f(x)<0<f(y)$. Then it follows that $B(f,x,y,\delta)\cap (Q^+\cup Q^-)=\emptyset$, where $0<\delta<\frac{1}{2}\min \{|f(x)|,|f(y)| \}$. This completes the proof.
	\end{proof}
	
	It is now evident that if $X$ contains no non-trivial proper clopen sets, then $cl_p(U(X))$ is the collection of all non-negative and non-positive functions in $C(X)$. This phenomenon clearly characterises these kind of spaces.
	
	\begin{theorem}
		$cl_p(U(X))$ consists precisely of non-negative and non-positive functions in $C(X)$, i.e., \[cl_p(U(X))=\{f\in C(X)\colon f\geq \boldsymbol{0}\;on\;X\;or\;f\leq \boldsymbol{0}\;on\;X \} \] if and only if $X$ has no non-trivial proper clopen set.
	\end{theorem}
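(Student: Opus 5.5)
We will read the statement through Theorem \ref{tclosureUX}, which gives $cl_p(U(X))=\bigcap_{Q\in\mathcal{Q}}(Q^+\cup Q^-)$. Write $S=\{f\in C(X)\colon f\geq \boldsymbol{0}\;on\;X\;or\;f\leq \boldsymbol{0}\;on\;X\}$. Note that $S=X^+\cup X^-$.

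For sufficiency, suppose $X$ has no non-trivial proper clopen set. Then for any $x\in X$ the only clopen neighbourhood of $x$ is $X$ itself, so the quasicomponent of $x$ is $X$. Hence $\mathcal{Q}=\{X\}$, and Theorem \ref{tclosureUX} gives directly $cl_p(U(X))=X^+\cup X^-=S$.

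For necessity, we argue by contraposition. Suppose $X$ has a clopen set $U$ with $\emptyset\neq U\neq X$. Define $f=\boldsymbol{1}$ on $U$ and $f=-\boldsymbol{1}$ on $X\setminus U$. Since $U$ is clopen, $f$ is continuous. It never vanishes, so $f\in U(X)\subseteq cl_p(U(X))$. However, $f$ takes both the value $1$ and the value $-1$, so $f\notin S$. Thus $cl_p(U(X))\neq S$.

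The only point needing care is that quasicomponents are defined as intersections of clopen neighbourhoods, so that the absence of proper non-trivial clopen sets forces $\mathcal{Q}=\{X\}$. This is immediate from the definition. The argument is otherwise routine: the necessity direction is self-contained and does not even require Theorem \ref{tclosureUX}.
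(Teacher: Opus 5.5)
Your proof is correct and follows the paper's route. The paper gives no separate proof: it reads the sufficiency off Theorem \ref{tclosureUX} once the absence of proper non-trivial clopen sets forces $\mathcal{Q}=\{X\}$, exactly as you do, and only calls the converse clear. Your unit equal to $\boldsymbol{1}$ on $U$ and $-\boldsymbol{1}$ on $X\setminus U$ is the natural witness for that converse, showing that $cl_p(U(X))$ contains a function taking both signs.
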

	
	The subsequent corollary is a direct consequence.
	
	\begin{corollary}
		If $X$ is a connected topological space, then $cl_p(U(X))$ is the collection of all non-negative and non-positive functions in $C(X)$.
	\end{corollary}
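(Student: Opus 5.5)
The corollary follows from Theorem \ref{tclosureUX}, which gives $cl_p(U(X))=\bigcap_{Q\in\mathcal{Q}}(Q^+\cup Q^-)$. The plan is to show that a connected space has only one quasicomponent, namely $X$ itself, so that the intersection has a single term.

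First, recall that the quasicomponent of $x$ is the intersection of all clopen sets containing $x$. If $X$ is connected, its only clopen subsets are $\emptyset$ and $X$. Hence the only clopen set containing a given point $x$ is $X$, and $Q_x=X$ for every $x\in X$. So $\mathcal{Q}=\{X\}$.

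Substituting this into Theorem \ref{tclosureUX} gives
\[
cl_p(U(X))=X^+\cup X^-=\{f\in C(X)\colon f\geq \boldsymbol{0}\ \text{on}\ X\}\cup\{f\in C(X)\colon f\leq\boldsymbol{0}\ \text{on}\ X\},
\]
which is exactly the set of all non-negative and all non-positive functions in $C(X)$.

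Alternatively, one could cite the preceding theorem directly, since a connected space has no non-trivial proper clopen set. The two routes give the same conclusion.

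There is no real obstacle here. The only point needing care is the identification $\mathcal{Q}=\{X\}$, which follows at once from the definition of a quasicomponent.
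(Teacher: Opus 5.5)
Your proposal is correct and matches the paper, which presents this corollary as a direct consequence of the preceding theorem on spaces with no non-trivial proper clopen set; that theorem is itself read off from Theorem \ref{tclosureUX}. Your observation that connectedness forces $\mathcal{Q}=\{X\}$, so that $cl_p(U(X))=X^+\cup X^-$, is exactly the intended argument.
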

	
	In this context, we recall that every unit in $C(X)$ is a Von-Neumann regular element. So, $U(X)\subseteq V(X)$, where $V(X)$ denotes the set of all Von-Neumann regular elements in $C(X)$. Moreover, for any $f\in V(X)$, $Z(f)$ is a clopen subset of $X$. These observations naturally lead to the following theorem. 
	
	\begin{theorem} \label{tVonNeumannRegularClosure}
		$cl_p(V(X))=cl_p(U(X))=\bigcap\limits_{Q\in \mathcal{Q}}(Q^+\cup Q^-).$
	\end{theorem}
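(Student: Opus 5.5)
Since $U(X)\subseteq V(X)$, we get $cl_p(U(X))\subseteq cl_p(V(X))$ at once. By Theorem \ref{tclosureUX}, $cl_p(U(X))=\bigcap_{Q\in\mathcal{Q}}(Q^+\cup Q^-)$, so the whole statement follows once we show $cl_p(V(X))\subseteq \bigcap_{Q\in\mathcal{Q}}(Q^+\cup Q^-)$. In the proof of Theorem \ref{tclosureUX} each set $Q^+\cup Q^-$ was already shown to be closed in $C_p(X)$. It therefore suffices to prove the inclusion $V(X)\subseteq Q^+\cup Q^-$ for every quasicomponent $Q$, and then take closures.

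To prove this inclusion, fix $f\in V(X)$ and a quasicomponent $Q$, and suppose, for contradiction, that there are $x,y\in Q$ with $f(x)<0<f(y)$. We use the fact recalled just before the statement: $Z(f)$ is clopen for a Von-Neumann regular $f$. (If needed this is quickly justified: $f=f^2g$ gives $Z(f)=X\setminus Z(fg)$, and $fg$ is an idempotent, hence takes only the values $0,1$.) Then $W=X\setminus Z(f)$ is clopen, and $f|_W$ never vanishes. Set $A=\{t\in X\colon f(t)>0\}$. This set is open. Moreover $A=\{t\in W\colon f(t)\geq 0\}$, which is closed in $W$, and $W$ is closed in $X$, so $A$ is closed in $X$ as well. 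Thus $A$ is a clopen set containing $y$ but not $x$, contradicting that $x$ and $y$ lie in the same quasicomponent. Hence $f|_Q\geq\boldsymbol{0}$ or $f|_Q\leq\boldsymbol{0}$, i.e.\ $f\in Q^+\cup Q^-$.

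The only real obstacle is this clopenness step. A general $f$ may change sign across $Z(f)$ within one quasicomponent, and it is regularity, through $Z(f)$ being clopen, that excludes this. Once the inclusion $V(X)\subseteq\bigcap_{Q\in\mathcal{Q}}(Q^+\cup Q^-)$ holds, closedness of the right side gives $cl_p(V(X))\subseteq cl_p(U(X))$, and equality follows.
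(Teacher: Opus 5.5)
Your proof is correct and follows the paper's argument step for step: you reduce to $V(X)\subseteq\bigcap_{Q\in\mathcal{Q}}(Q^+\cup Q^-)$ via Theorem \ref{tclosureUX}, then use clopenness of $Z(f)$ to see that $\{t\in X\colon f(t)>0\}$ is clopen and so cannot split a quasicomponent. There is one small slip in your parenthetical: $f=f^2g$ gives $Z(f)=Z(fg)$, not $X\setminus Z(fg)$, because $fg$ equals $1$ exactly off $Z(f)$; clopenness of $Z(f)$ still follows, since $fg$ is a continuous idempotent.
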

	
	\begin{proof}
		That $cl_p(V(X))\supseteq cl_p(U(X))$ is evident from the preceding discussions. To establish that $cl_p(V(X))\subseteq cl_p(U(X))$, it is sufficient to show that $V(X)\subseteq \bigcap\limits_{Q\in \mathcal{Q}}(Q^+\cup Q^-).$ The rest would follow from Theorem \ref{tclosureUX}.	Consider $f\in V(X)$ and $Q\in \mathcal{Q}$. If possible, let $f(x)<0<f(y)$ for some $x,y\in X$. Since $Z(f)$ is clopen, the sets $V_x=\{t\in X\colon f(t)<0 \}=\{t\in X\colon f(t)\leq 0 \}\setminus Z(f)$ and $V_y=\{t\in X\colon f(t)>0 \}=\{t\in X\colon f(t)\geq 0 \}\setminus Z(f)$ are disjoint clopen neighbourhoods of $x$ and $y$ respectively. This contradicts the fact that $x$ and $y$ lie in the same quasicomponent.
	\end{proof}

	We now proceed to discuss the closure of certain specific type of subrings in $C(X)$. In order to do this efficiently, we recall a few notations. For each $f\in C(X)$ and $\epsilon>0$, the set $A_\epsilon(f)$ is defined as $A_\epsilon(f)=\{x\in X\colon |f(x)|\geq \epsilon\}$, for each $\epsilon>0$.   Also, the collection $C_\infty^\mathscr{P}(X)=\{f\in C(X)\colon A_\epsilon \in \mathscr{P} \}$ forms a subring of $C(X)$ containing the ideal $C_\mathscr{P}(X)$ \cite{ARN2022}. We assert that the closure of $C_\infty^\mathscr{P}(X)$ coincides with that of $C_\mathscr{P}(X)$.
	
	\begin{theorem} \label{tClosureCInfinity}
		$cl(C_\infty^\mathscr{P}(X))=cl(C_\mathscr{P}(X))$.
	\end{theorem}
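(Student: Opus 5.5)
Since $C_\mathscr{P}(X)\subseteq C_\infty^\mathscr{P}(X)$, we have $cl_p(C_\mathscr{P}(X))\subseteq cl_p(C_\infty^\mathscr{P}(X))$ at once. So the work is the reverse inclusion. By Theorem \ref{tClosureC_P}, $cl_p(C_\mathscr{P}(X))=\{f\in C(X)\colon f(X\setminus X_\mathscr{P})=\{0\}\}=M_{X\setminus X_\mathscr{P}}$. By Lemma \ref{lM_A}, this set is closed in $C_p(X)$. It therefore suffices to show $C_\infty^\mathscr{P}(X)\subseteq M_{X\setminus X_\mathscr{P}}$: every $f\in C_\infty^\mathscr{P}(X)$ vanishes at every point at which $X$ fails to be locally $\mathscr{P}$.

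To prove this, take $f\in C_\infty^\mathscr{P}(X)$ and $x\in X$ with $f(x)\neq 0$, and set $\epsilon=|f(x)|/2>0$. By definition of $C_\infty^\mathscr{P}(X)$, the set $A_\epsilon(f)=\{y\colon |f(y)|\geq \epsilon\}$ belongs to $\mathscr{P}$. It contains the open set $\{y\colon |f(y)|>\epsilon\}$, which contains $x$. Hence $A_\epsilon(f)$ is a neighbourhood of $x$ lying in $\mathscr{P}$, so $x\in X_\mathscr{P}$. Contrapositively, $f$ vanishes on $X\setminus X_\mathscr{P}$, i.e.\ $f\in M_{X\setminus X_\mathscr{P}}$.

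Combining the two paragraphs gives
\[cl_p(C_\infty^\mathscr{P}(X))\subseteq cl_p\bigl(M_{X\setminus X_\mathscr{P}}\bigr)=M_{X\setminus X_\mathscr{P}}=cl_p(C_\mathscr{P}(X)),\]
which is the claimed equality. No step is a serious obstacle. The only point needing care is reading the definition of $C_\infty^\mathscr{P}(X)$ correctly: the condition is that $A_\epsilon(f)\in\mathscr{P}$ for \emph{every} $\epsilon>0$, and we use it for $\epsilon=|f(x)|/2$. We also use that a set containing an open neighbourhood of $x$ is a neighbourhood of $x$ in the sense used to define $X_\mathscr{P}$; membership of $A_\epsilon(f)$ in $\mathscr{P}$ is exactly the hypothesis and needs no closedness check. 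Since $C_\infty^\mathscr{P}(X)$ is only a subring and not an ideal, Theorem \ref{tClosureAll} cannot be applied to it directly, which is why we sandwich it between $C_\mathscr{P}(X)$ and the closed ideal $M_{X\setminus X_\mathscr{P}}$.
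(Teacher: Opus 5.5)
Your proof is correct and follows essentially the same route as the paper. Both reduce via Theorem \ref{tClosureC_P} to showing that every $f\in C_\infty^\mathscr{P}(X)$ vanishes on $X\setminus X_\mathscr{P}$, and get this from $A_\epsilon(f)$ with $\epsilon=|f(x)|/2$ being a neighbourhood of $x$ lying in $\mathscr{P}$; you additionally make explicit the closedness of $M_{X\setminus X_\mathscr{P}}$ (Lemma \ref{lM_A}), which the paper leaves implicit, and your $\epsilon=|f(x)|/2$ corrects the paper's typo $|f(X)|/2$.
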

	
	\begin{proof}
		In light of Theorem \ref{tClosureC_P} and using the fact that $C_\mathscr{P}(X)\subseteq C_\infty^\mathscr{P}(X)$, it is sufficient to prove that for any $f\in C_\infty^\mathscr{P}(X)$, $f( X\setminus X_\mathscr{P})=\{0\}$. Suppose $f\in C_\infty^\mathscr{P}(X)$ and if possible, let there exist $x\in X\setminus X_\mathscr{P}$ such that $f(x)\neq 0$. Then for $\epsilon=\frac{|f(X)|}{2}$, $A_\epsilon(f)$ is a neighborhood of $x$ and $A_\epsilon(f)\in \mathscr{P}$, which contradicts $x\notin X_\mathscr{P}$.
	\end{proof}
	
	The following observations are   immediate using Theorem \ref{tClosureC_P} and Theorem \ref{tClosureCInfinity}.
	
	\begin{corollary} \label{cLocallyP}
		The following statements are equivalent.
		\begin{enumerate}[label=\arabic*.]
			\item $X$ is locally $\mathscr{P}$.
			\item $C_\mathscr{P}(X)$  is a free ideal in $C(X)$.
			\item $C_\mathscr{P}(X)$ is dense in $C_p(X)$.
			\item $C_\infty^\mathscr{P}(X)$ is dense in $C_p(X)$.
		\end{enumerate}
	\end{corollary}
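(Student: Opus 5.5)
The plan is to establish the cycle $1\Rightarrow 2\Rightarrow 3\Rightarrow 4\Rightarrow 1$, routing everything through the description of the closures in Theorem \ref{tClosureC_P} and Theorem \ref{tClosureCInfinity}.

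First, for $1\Rightarrow 2$: if $X$ is locally $\mathscr{P}$, fix $x\in X$ and a neighbourhood $P\in\mathscr{P}$ of $x$. By complete regularity, choose $g\in C(X)$ with $g(x)=1$ and with $\overline{X\setminus Z(g)}$ contained in the interior of $P$. Since $\mathscr{P}$ is an ideal of closed sets, it is closed under taking closed subsets, so $\overline{X\setminus Z(g)}\in\mathscr{P}$ and hence $g\in C_\mathscr{P}(X)$. This gives $x\notin\bigcap Z[C_\mathscr{P}(X)]$. As $x$ was arbitrary, $C_\mathscr{P}(X)$ is free.

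Next, $2\Rightarrow 3$ is exactly Corollary \ref{cFree=Dense}, which says a free ideal is dense in $C_p(X)$. For $3\Rightarrow 4$, note that $C_\mathscr{P}(X)\subseteq C_\infty^\mathscr{P}(X)$. Alternatively, Theorem \ref{tClosureCInfinity} says the two subsets have the same closure, so density of one gives density of the other.

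Finally, for $4\Rightarrow 1$: if $C_\infty^\mathscr{P}(X)$ is dense, then by Theorem \ref{tClosureCInfinity} and Theorem \ref{tClosureC_P} we get
\[
C(X)=cl_p(C_\infty^\mathscr{P}(X))=\{f\in C(X)\colon f(X\setminus X_\mathscr{P})=\{0\}\}.
\]
In particular $\boldsymbol{1}$ belongs to this set, which forces $X\setminus X_\mathscr{P}=\emptyset$, i.e.\ $X=X_\mathscr{P}$, so $X$ is locally $\mathscr{P}$.

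I do not expect any real obstacle, since every step is a direct appeal to an earlier result. The one point needing care is in $1\Rightarrow 2$: the cozero set of $g$ must have its closure inside $P$, and we must use that $\mathscr{P}$ is hereditary with respect to closed subsets. This is the same construction already used in the proof of Theorem \ref{tClosureC_K(X)C_psi(X)}. Alternatively, $1\Leftrightarrow 3$ can be obtained directly from Theorem \ref{tClosureC_P} by the same argument as in $4\Rightarrow 1$, which avoids the construction altogether.
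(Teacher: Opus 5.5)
Your proposal is correct and follows essentially the paper's route. The paper treats the corollary as immediate from Theorem \ref{tClosureC_P} (the closure of $C_\mathscr{P}(X)$ is $\{f\in C(X)\colon f(X\setminus X_\mathscr{P})=\{0\}\}$, which is all of $C(X)$ exactly when $X=X_\mathscr{P}$) and Theorem \ref{tClosureCInfinity} (the two closures coincide), with $1\Leftrightarrow 2\Leftrightarrow 3$ already recorded in Corollary \ref{cLocallyP_free_dense}. You use the same ingredients, re-proving $1\Rightarrow 2$ by the construction behind the remark $X\setminus X_\mathscr{P}=\bigcap Z[C_\mathscr{P}(X)]$ and $2\Rightarrow 3$ via Corollary \ref{cFree=Dense}.
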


	We now revisit the following result, which can be deduced from \cite[Theorem 2.1]{Mandelker1971}.
	
	\begin{theorem}
		$C_\psi(X)$ is the largest ideal of $C(X)$, contained in $C^*(X)$ and $C_\psi(X)=\{f\in C(X)\colon \forall\; g\in C(X),\;fg\in C^*(X) \}$. 
	\end{theorem}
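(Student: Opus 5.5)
We prove the two assertions separately, writing $C_\psi(X)$ for the ideal of functions with pseudocompact support, i.e.\ those $f$ for which $cl_X(X\setminus Z(f))$ is pseudocompact, and setting $S=\{f\in C(X)\colon fg\in C^*(X)\text{ for all }g\in C(X)\}$.

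\textbf{Step 1: $S$ is the largest ideal contained in $C^*(X)$.} $S$ is closed under addition. If $f\in S$ and $h\in C(X)$, then for every $g$ we have $(fh)g=f(hg)\in C^*(X)$, so $fh\in S$. Taking $g=\boldsymbol{1}$ gives $S\subseteq C^*(X)$. Now let $I$ be any ideal with $I\subseteq C^*(X)$. For $f\in I$ and $g\in C(X)$ we get $fg\in I\subseteq C^*(X)$, so $I\subseteq S$. The whole theorem therefore reduces to showing $C_\psi(X)=S$.

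\textbf{Step 2: $C_\psi(X)\subseteq S$.} Let $f\in C_\psi(X)$, put $K=cl_X(X\setminus Z(f))$, and let $g\in C(X)$. We need the standard fact that a continuous real function on $X$ is bounded on the closure of an open set $U$ whenever that closure is pseudocompact. This holds because such closures are relatively pseudocompact: given $h\in C(X)$ unbounded on $\overline U$, one builds a discrete sequence of points of $U$ along which $|h|\to\infty$, and a bump-function sum supported in $U$ that is unbounded on $\overline U$ yet restricts continuously to $\overline U$, contradicting pseudocompactness. So $g$ is bounded on $K$, and $f$ is bounded on $K$ for the same reason. Since $fg$ vanishes off $K$, it is bounded, and hence $f\in S$.

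\textbf{Step 3: $S\subseteq C_\psi(X)$.} This is the main step, and we argue by contrapositive. Suppose $f\in C(X)$ and $K=cl_X(X\setminus Z(f))$ is not pseudocompact; we produce $g$ with $fg$ unbounded.

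First we obtain points with $f(x_n)\neq 0$ forming a discrete, closed, $C$-embedded family that admits pairwise disjoint open neighbourhoods $V_n\subseteq X\setminus Z(f)$ which form a locally finite family. The plan is to take an unbounded $h$ on $K$. Because $K$ is the closure of the open set $X\setminus Z(f)$, $h$ is unbounded on $X\setminus Z(f)$ itself, and we can pick $x_n\in X\setminus Z(f)$ with $|h(x_n)|>|h(x_{n-1})|+1$. The sets $V_n=\{x\colon |h(x)-h(x_n)|<1/3\}\cap(X\setminus Z(f))$ are then disjoint, and they are locally finite because $h$ is continuous.

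Next we choose continuous bumps $\phi_n$ supported in $V_n$ with $\phi_n(x_n)=n/|f(x_n)|$ and set $g=\sum_n\phi_n$. Local finiteness makes $g$ continuous, and $(fg)(x_n)=n$, so $fg\notin C^*(X)$ and $f\notin S$.

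The expected obstacle is verifying local finiteness of the $V_n$ at points of $X$ outside $K$ as well as inside $K$. At any point $y$, the neighbourhood $\{|h-h(y)|<1/3\}$ meets at most two of the $V_n$, since the values $h(x_n)$ are spaced more than $1$ apart. Combining Steps 2 and 3 gives $C_\psi(X)=S$, and Step 1 then yields maximality.
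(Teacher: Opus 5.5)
Your proof is correct in substance, and it takes a different route from the paper. The paper gives no argument here; it simply defers to Mandelker's Theorem~2.1.

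\textbf{Comparison with the paper.} You give a self-contained, elementary proof. Step~1 isolates the purely algebraic part: the set $S$ of all $f$ with $fg\in C^*(X)$ for every $g$ is trivially the largest ideal inside $C^*(X)$. Everything then reduces to showing $C_\psi(X)=S$. A by-product is that you never have to check separately that $C_\psi(X)$ is an ideal. That check would otherwise need the fact that the closure of an open subset of a pseudocompact space is pseudocompact, applied to $cl_X(X\setminus Z(f_1+f_2))$ and $cl_X(X\setminus Z(fh))$.

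The only real topological input is your Step~3 construction. An unbounded continuous function on the regular closed set $K=cl_X(X\setminus Z(f))$ yields a pairwise disjoint, locally finite sequence of open sets inside $X\setminus Z(f)$. Bumps planted there, scaled by $1/|f(x_n)|$, produce a $g$ with $fg$ unbounded. The citation buys brevity. Your route shows exactly where regular closedness of the support is used: to transfer unboundedness of $h$ to the dense open set $X\setminus Z(f)$, and to make the $V_n$ open in $X$.

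\textbf{Points to tighten.}

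\emph{Step~2.} The detour through relative pseudocompactness is unnecessary. The restrictions $f|_K$ and $g|_K$ are continuous real functions on the pseudocompact space $K$, so they are bounded, and that is all you use.

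\emph{Step~3, the domain of $h$.} Non-pseudocompactness of $K$ only gives $h\in C(K)$, and such an $h$ need not extend continuously to $X$. So your final local-finiteness check, which evaluates $h(y)$ at arbitrary $y\in X$, does not literally make sense. The repair is short.
\begin{itemize}
\item For $y\in X\setminus K$, the open set $X\setminus K$ contains $y$ and misses every $V_n$.
\item For $y\in K$, choose $W$ open in $X$ with $W\cap K=\{z\in K\colon |h(z)-h(y)|<1/3\}$. Since each $V_n\subseteq K$, $W$ meets $V_n$ only if that set does. Your spacing argument then shows this happens for at most two consecutive indices.
\end{itemize}
Likewise, $V_n$ is open in $X$ because it is open in the subspace $X\setminus Z(f)$, which is itself open in $X$ and contained in $K$.

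\emph{Minor points.} You have $(fg)(x_n)=\pm n$ rather than $n$, though only $|(fg)(x_n)|=n$ matters. The claim that $\{x_n\}$ is $C$-embedded is never used.
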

	
	This motivates us to the following result.
	
	\begin{theorem}
	Suppose $\mathscr{P}$ is an ideal of closed sets in $X$ and define $J_\mathscr{P}=\{f\in C(X)\colon \forall\; g\in C(X),\;fg\in C_\infty^\mathscr{P}(X) \}$. Then the following assertions hold.
	\begin{enumerate}[label=\arabic*.]
		\item \label{t3.14(1)} $C_\mathscr{P}(X)\subseteq J_\mathscr{P}\subseteq C_\infty^\mathscr{P}(X)$.
			
		\item If $I$ is an ideal of $C(X)$, contained in $C_\infty^\mathscr{P}(X)$, then $I\subseteq J_\mathscr{P}$. In other words, $J_\mathscr{P}$ is the largest ideal contained in $C_\infty^\mathscr{P}(X)$.
			
		\item $cl(C_\infty^\mathscr{P}(X))=cl_p(J_\mathscr{P})=cl(C_\mathscr{P}(X))$.
		\end{enumerate}
	\end{theorem}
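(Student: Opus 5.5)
The plan is to prove the three assertions in order, using only the ideal and subring properties together with Theorem \ref{tClosureC_P} and Theorem \ref{tClosureCInfinity}.

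\textbf{Assertion 1.} First check that $J_\mathscr{P}$ is an ideal.

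\emph{Additivity.} If $f_1,f_2\in J_\mathscr{P}$, then for every $g$ we have $(f_1-f_2)g=f_1g-f_2g\in C_\infty^\mathscr{P}(X)$, since $C_\infty^\mathscr{P}(X)$ is a subring.

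\emph{Absorption.} If $f\in J_\mathscr{P}$ and $h\in C(X)$, then $(fh)g=f(hg)\in C_\infty^\mathscr{P}(X)$ for every $g$.

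Taking $g=\boldsymbol{1}$ gives $J_\mathscr{P}\subseteq C_\infty^\mathscr{P}(X)$. Since $C_\mathscr{P}(X)$ is an ideal of $C(X)$ (see \cite{ARN2022}), for $f\in C_\mathscr{P}(X)$ and any $g$ we get $fg\in C_\mathscr{P}(X)\subseteq C_\infty^\mathscr{P}(X)$. Hence $C_\mathscr{P}(X)\subseteq J_\mathscr{P}$.

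\textbf{Assertion 2.} Let $I$ be an ideal with $I\subseteq C_\infty^\mathscr{P}(X)$. For $f\in I$ and any $g\in C(X)$, we have $fg\in I\subseteq C_\infty^\mathscr{P}(X)$, so $f\in J_\mathscr{P}$. Combined with Assertion 1, which shows $J_\mathscr{P}$ is itself an ideal contained in $C_\infty^\mathscr{P}(X)$, this makes $J_\mathscr{P}$ the largest such ideal. Nothing beyond the definitions is needed here.

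\textbf{Assertion 3.} Take closures in the chain of Assertion 1 to get
\[cl_p(C_\mathscr{P}(X))\subseteq cl_p(J_\mathscr{P})\subseteq cl_p(C_\infty^\mathscr{P}(X)).\]
By Theorem \ref{tClosureCInfinity}, the two ends coincide. Hence all three closures are equal, and by Theorem \ref{tClosureC_P} each equals $\{f\in C(X)\colon f(X\setminus X_\mathscr{P})=\{0\}\}$.

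The only step that needs any care is confirming that $J_\mathscr{P}$ is genuinely an ideal. This relies on $C_\infty^\mathscr{P}(X)$ being closed under subtraction, which is the subring property quoted from \cite{ARN2022}.

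One caveat: $J_\mathscr{P}$ may fail to be \emph{proper}, which happens precisely when $\boldsymbol{1}\in J_\mathscr{P}$. In that case every $g$ lies in $C_\infty^\mathscr{P}(X)$. The sandwich argument for Assertion 3 still works, because it uses only the inclusions and never uses properness, so there is no real obstacle.
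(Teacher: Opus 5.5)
Your proof is correct and follows the same argument as the paper. Assertion 1 comes from taking $g=\boldsymbol{1}$ and from the ideal property of $C_\mathscr{P}(X)$, assertion 2 comes directly from the definition of $J_\mathscr{P}$, and assertion 3 sandwiches $cl_p(J_\mathscr{P})$ between the two closures that Theorem \ref{tClosureCInfinity} shows to coincide; your explicit check that $J_\mathscr{P}$ is an ideal, using that $C_\infty^\mathscr{P}(X)$ is a subring, supplies a step the paper leaves implicit when it calls $J_\mathscr{P}$ the largest ideal contained in $C_\infty^\mathscr{P}(X)$.
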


	\begin{proof}
	\	\begin{enumerate}[label=\arabic*.]
			\item This follows from the definition of an ideal of closed subsets in $X$ and the fact that $\boldsymbol{1}\in C(X)$.
			
			\item Let $I$ be an ideal of $C(X)$, contained in $C_\infty^\mathscr{P}(X)$. Then for any $f\in I$ and $g\in C(X)$, $fg\in I\subseteq C_\infty^\mathscr{P}(X)$ and so, $f\in J_\mathscr{P}$.
			
			\item This is evident from \ref{t3.14(1)} and Theorem \ref{tClosureCInfinity}.
		\end{enumerate}
	\end{proof}

	\begin{definition}
		A family $F\subseteq C(X)$ is said to separate points if for any two points $x,y\in X$ with $x\neq y$ there exists a $f\in X$ such that $f(x)\neq f(y)$.
	\end{definition}
	
	\begin{lemma}\label{separating Lemma}
		Let $S$ be a subring of $C(X)$ which contains all the constant functions and separates points in $X$. Then for any two points $x,y\in X$ with $x\neq y$ and $r\in \mathbb{R}$ there exists a $f\in S$ such that $f(x)=0$ and $f(y)=r$.
	\end{lemma}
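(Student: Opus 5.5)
Since $S$ separates points, there is some $g\in S$ with $g(x)\neq g(y)$, and the proof amounts to turning $g$ into the required function by an affine change using constants from $S$.

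The first step is to form $h=g-g(x)\cdot\boldsymbol{1}$. This lies in $S$, because $S$ contains the constant function $g(x)$ and is closed under subtraction. It satisfies $h(x)=0$ and $h(y)=g(y)-g(x)\neq 0$. The second step is to set $c=r/(g(y)-g(x))$, which is a real number because the denominator is nonzero, and to take $f=c\cdot h$. This is a product in $S$ of the constant function $c\in S$ and $h\in S$, so $f\in S$. Since ring multiplication is pointwise, $f(x)=c\,h(x)=0$ and $f(y)=c\,h(y)=r$, which is exactly what the lemma asks for.

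There is no real obstacle. The only point to keep in mind is that scalar multiplication is not assumed as part of the structure of $S$. It is obtained from ring multiplication by the constant functions, which the hypothesis places in $S$. The case $r=0$ needs no separate treatment: it gives $f=\boldsymbol{0}$, which is in $S$ and satisfies the conditions.
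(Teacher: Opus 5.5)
Your proof is correct and matches the paper's argument: pick $g\in S$ with $g(x)\neq g(y)$, set $h=g-g(x)$, and take $f=\frac{r}{h(y)}h$. Your remark that scaling by $\frac{r}{h(y)}$ is legitimate because it is multiplication by a constant function in $S$ makes explicit a point the paper leaves unstated.
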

	\begin{proof}
		Since $S$ separates points in $X$, then there exists a $g\in S$ such that $g(x)\neq g(y)$. Set, $h=g-g(x)$ and hence, $f=\frac{r}{h(y)}h$ is the required function.
	\end{proof}
	\begin{theorem}\label{separating Theorem}
		Let $S$ be a subring of $C(X)$ which contains all the constant functions and separates points in $X$. Then for given distinct points $x_1,x_2,\cdots,x_n\in X$, $i\in \{x_1,x_2,\cdots,x_n\}$, and $r\in \mathbb{R}$ there exists a $f_i\in S$ such that $f_i(x_i)=r$ and $f_i(x_j)=0$ for $j\neq i$.
	\end{theorem}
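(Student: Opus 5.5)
The plan is to build $f_i$ as a product of functions supplied by Lemma \ref{separating Lemma}, exploiting that $S$ is closed under multiplication. Write the point $i$ as $x_i$. For each $j\neq i$, apply Lemma \ref{separating Lemma} to the distinct pair $x_j, x_i$ with the value $r=1$. This gives $g_j\in S$ with $g_j(x_j)=0$ and $g_j(x_i)=1$. (If $n=1$ there are no such $j$, and the constant function $r$, which lies in $S$ by hypothesis, already works.)

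Next, set
\[
f_i=r\cdot\prod_{j\neq i} g_j .
\]
This lies in $S$, since $S$ is a subring containing the constant function $r$, and it is closed under finite products. Evaluating at $x_i$, every factor $g_j(x_i)$ equals $1$, so $f_i(x_i)=r$. Evaluating at $x_k$ for $k\neq i$, the factor $g_k(x_k)=0$ appears in the product, so $f_i(x_k)=0$.

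There is no real obstacle here. The only points to watch are that the $x_j$ are distinct, so that Lemma \ref{separating Lemma} applies to each pair, and that multiplying by a constant stays inside $S$. Both follow directly from the hypotheses.
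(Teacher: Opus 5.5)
Your proof is correct and matches the paper's argument: for each $j\neq i$ you take a function from Lemma~\ref{separating Lemma} that equals $1$ at $x_i$ and $0$ at $x_j$, and multiply these together with a factor taking the value $r$ at $x_i$. The only differences are cosmetic: you use the constant function $r$ for that last factor and handle $n=1$ explicitly, whereas the paper gets a $g\in S$ with $g(x_i)=r$ from the lemma.
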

	\begin{proof}
		It follows from Lemma \ref{separating Lemma} that there exists a $g\in S$ such that $g(x_i)=r$. Choose, a $j\in \{x_1,x_2,\cdots,x_{i-1},x_{i+1},\cdots,x_n\}$, and hence, again by Lemma \ref{separating Lemma}, we can find a $h_j\in S$ such that $h_j(x_i)=1$ and $h_j(x_j)=0$. Therefore, $f_i=gh_1h_2\cdots h_{i-1}h_{i+1}\cdots h_n$ is our required function.
	\end{proof}
	\begin{theorem}
		Let $S$ be a subring of $C(X)$ which contains all the constant functions. Then $cl_p(S)=C(X)$ if and only if $S$ separates points in $X$.
	\end{theorem}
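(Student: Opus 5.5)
The plan is to prove the two directions separately, using Theorem \ref{separating Theorem} for sufficiency and a single closed set in $C_p(X)$ for necessity.

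\emph{Sufficiency.} Assume $S$ separates points. Let $f\in C(X)$ and let $B(f,x_1,x_2,\cdots,x_n,\epsilon)$ be a basic open neighbourhood of $f$. We may take $x_1,\cdots,x_n$ distinct, since repeated points only repeat conditions. For each $i\in\{1,2,\cdots,n\}$, Theorem \ref{separating Theorem} with $r=f(x_i)$ gives $f_i\in S$ with $f_i(x_i)=f(x_i)$ and $f_i(x_j)=0$ for $j\neq i$.

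Put $g=\sum_{i=1}^n f_i$. Then $g\in S$ because $S$ is closed under addition, and $g(x_j)=f(x_j)$ for every $j$. Hence $g\in B(f,x_1,x_2,\cdots,x_n,\epsilon)\cap S$, so $S$ is dense in $C_p(X)$.

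There is one small gap in Theorem \ref{separating Theorem} as written. The factor $g$ with $g(x_i)=r$ exists simply because $S$ contains the constant $r$. The functions $h_j$ with $h_j(x_i)=1$ and $h_j(x_j)=0$ come from Lemma \ref{separating Lemma} applied with the roles of the points swapped. If needed, I will also handle the case $n=1$ directly: the constant function $f(x_1)$ already lies in $S$.

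\emph{Necessity.} Suppose $S$ does not separate points, so there are distinct $x,y\in X$ with $h(x)=h(y)$ for every $h\in S$. Then $S$ is contained in
\[E=\{h\in C(X)\colon h(x)=h(y)\}.\]
The set $E$ is closed in $C_p(X)$. Indeed, if $h\notin E$, set $\delta=|h(x)-h(y)|/2$; then $B(h,x,y,\delta)$ is a neighbourhood of $h$ missing $E$, by the triangle inequality. Therefore $cl_p(S)\subseteq E$.

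Since $X$ is Tychonoff, there is $f\in C(X)$ with $f(x)=0$ and $f(y)=1$, and this $f\notin E$. Hence $cl_p(S)\neq C(X)$.

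No step is a real obstacle. The only care needed is invoking Theorem \ref{separating Theorem} correctly for distinct points and checking that $E$ is closed, which is the same basic-neighbourhood argument as in Lemma \ref{lM_A}.
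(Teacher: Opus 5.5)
Your proof is correct and takes the paper's route. For sufficiency you sum the functions $f_i\in S$ supplied by Theorem~\ref{separating Theorem}, and for necessity you use the same basic-neighbourhood argument, phrased contrapositively via the closed set $E=\{h\in C(X)\colon h(x)=h(y)\}$, whereas the paper directly picks $g\in S\cap B(f,x,y,\frac{1}{2})$ for some $f$ with $f(x)=1$ and $f(y)=2$. Your remarks on taking the $x_i$ distinct and on getting $g(x_i)=r$ from the constant $r\in S$ just tidy up details the paper leaves implicit.
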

	
	\begin{proof}
		Let $S$ separates points in $X$. Let $f\in C(X)$ and $B(f,x_1,x_2,\cdots,x_n,\epsilon)$ be a basic open set containing $f$ in $C_p(X)$ where $x_i\in X$ for all $i=1,2,\cdots,n$ and $\epsilon>0$. Then by Theorem \ref{separating Theorem}, for each $i\in \{1,2,\cdots,i-1,i+1,\cdots,n\}$ there exists a $h_i\in S$ such that $h_i(x_i)=f(x_i)$ and $h_i(x_j)=0$ for each $j\in \{1,2,\cdots,i-1,i+1,\cdots,n\}$. It is straightforward to verify that $\sum_{i=1}^{n}h_i\in S\cap B(f,x_1,x_2,\cdots,x_n,\epsilon)$ and hence, $S$ is dense in $C_p(X)$. Conversely, let $cl_p(S)=C_p(X)$. Suppose, $x,y\in X$ with $x\neq y$ and hence, there is a $f\in C(X)$ such that $f(x)=1$ and $f(y)=2$. Choose a $g\in S\cap B(f,x,y,\frac{1}{2})$ and therefore, $g(x)\neq g(y$.) 
	\end{proof}
	
	\begin{corollary}
		Any intermediate ring $A(X)$ is dense in $C_p(X)$.
	\end{corollary}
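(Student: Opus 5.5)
The corollary concerns an intermediate ring $A(X)$, that is, a subring with $C^*(X)\subseteq A(X)\subseteq C(X)$. The plan is to apply the preceding theorem: a subring $S$ of $C(X)$ containing all constant functions is dense in $C_p(X)$ if and only if it separates points. Taking $S=A(X)$, we have to check two hypotheses.

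First, $A(X)$ contains all constant functions. Every constant function is bounded, so it lies in $C^*(X)$, and $C^*(X)\subseteq A(X)$.

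Second, $A(X)$ separates points, and this is the only step where the Tychonoff hypothesis on $X$ enters. Given distinct $x,y\in X$, complete regularity gives $f\in C(X)$ with $f(x)=0$ and $f(y)=1$. Replacing $f$ by $(f\wedge\boldsymbol{1})\vee\boldsymbol{0}$ if necessary, we may assume $f$ takes values in $[0,1]$. Then $f\in C^*(X)\subseteq A(X)$ and $f(x)\neq f(y)$.

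With both hypotheses verified, the preceding theorem yields $cl_p(A(X))=C(X)$. In particular this holds for $A(X)=C^*(X)$. I do not expect any real obstacle here; the only point needing care is that the separating function can be chosen bounded, and the truncation above handles that.
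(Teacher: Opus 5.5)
Your proposal is correct and follows essentially the same route as the paper, which states the corollary without separate proof as an immediate consequence of the preceding theorem. An intermediate ring contains all constants and contains $C^*(X)$, and $C^*(X)$ already separates points of a Tychonoff space, so that theorem gives $cl_p(A(X))=C(X)$.
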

	\begin{corollary}
		For a space $X$, the following are equivalent:
		\begin{enumerate}
			\item $X$ is pseudocompact.
			\item $C^*(X)$ is closed in $C(X)$.
			\item Any intermediate ring $A(X)$ is closed in $C_p(X)$.
		\end{enumerate}
	\end{corollary}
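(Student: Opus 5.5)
The plan is to reduce everything to the preceding corollary: every intermediate ring $A(X)$, with $C^*(X)\subseteq A(X)\subseteq C(X)$, is dense in $C_p(X)$. A dense subset of $C_p(X)$ is closed exactly when it equals the whole of $C(X)$, because $cl_p(A(X))=C(X)$. So for an intermediate ring $A(X)$ I will use the following equivalence:
\[A(X)\text{ is closed in }C_p(X)\iff A(X)=C(X).\]
I read statement (2) as ``$C^*(X)$ is closed in $C_p(X)$'', that is, closed for the point-open topology on $C(X)$.

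First I will recheck that the preceding corollary applies, so that nothing depends on an unstated hypothesis. Any intermediate ring $A(X)$ contains $C^*(X)$, and hence all constant functions. It also separates points. Since $X$ is Tychonoff, for distinct $x,y\in X$ there is $f\in C(X)$ with values in $[0,1]$, $f(x)=0$ and $f(y)=1$. This $f$ lies in $C^*(X)\subseteq A(X)$. Then the theorem characterising dense subrings gives $cl_p(A(X))=C(X)$.

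With this in hand the cycle of implications is short.

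$(1)\Rightarrow(3)$: if $X$ is pseudocompact, then $C^*(X)=C(X)$. Every intermediate ring is then squeezed to $C(X)$, which is trivially closed in $C_p(X)$.

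$(3)\Rightarrow(2)$: this is immediate, since $C^*(X)$ is itself an intermediate ring.

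$(2)\Rightarrow(1)$: if $C^*(X)$ is closed in $C_p(X)$, the displayed equivalence with $A(X)=C^*(X)$ gives $C^*(X)=C(X)$. This says every real-valued continuous function on $X$ is bounded, which is pseudocompactness.

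I do not expect a genuine obstacle. The only points needing care are the interpretation of (2) as closedness in the topology of pointwise convergence, and the check that $C^*(X)$ separates points. The latter is exactly where the standing Tychonoff assumption on $X$ is used.
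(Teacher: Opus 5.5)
Your proposal is correct and takes the same route the paper intends. The paper states this corollary without proof, immediately after showing that every intermediate ring is dense in $C_p(X)$, so the implied argument is yours: a dense subset of $C_p(X)$ is closed exactly when it equals $C(X)$, and $C^*(X)=C(X)$ exactly when $X$ is pseudocompact. Your reading of (2) as closedness in $C_p(X)$ is the only sensible one, since in the uniform topology $C^*(X)$ is always closed. Your explicit check, via complete regularity, that $C^*(X)$ separates points supplies the one detail the paper leaves implicit.
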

	
	\begin{theorem}
		Let $G$ be a proper additive subgroup of $C(X)$. Then $int_p(G)=\emptyset$.
	\end{theorem}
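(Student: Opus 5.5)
We argue by contradiction, using the standard fact that a subgroup of a topological group with nonempty interior is open. Suppose $int_p(G)\neq\emptyset$. Since $C_p(X)$ is a topological additive group and translations $h\mapsto h+g$ are homeomorphisms, this gives $\boldsymbol{0}\in int_p(G)$. Indeed, if $g\in int_p(G)$ with open $W$ satisfying $g\in W\subseteq G$, then $W-g$ is an open neighbourhood of $\boldsymbol{0}$, and it lies in $G$ because $G$ is a subgroup. So there is a basic open set $B(\boldsymbol{0},x_1,x_2,\cdots,x_n,\epsilon)\subseteq G$.

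We will then show that this basic set generates all of $C(X)$ additively, contradicting the properness of $G$. Take an arbitrary $f\in C(X)$. Since $\{x_1,\cdots,x_n\}$ is finite, the number $M=\max_i|f(x_i)|$ is finite. Choose $N\in\mathbb{N}$ with $M/N<\epsilon$. Then $\frac{1}{N}f$ satisfies $|\frac{1}{N}f(x_i)|<\epsilon$ for every $i$, so $\frac{1}{N}f\in B(\boldsymbol{0},x_1,\cdots,x_n,\epsilon)\subseteq G$. Because $G$ is closed under addition, $f=N\cdot\frac{1}{N}f\in G$. Hence $G=C(X)$, a contradiction.

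No step here is a real obstacle. The only points to state carefully are the translation argument, which uses that $G$ is a subgroup so $G-g=G$, and the fact that basic neighbourhoods in $C_p(X)$ restrict the values only at finitely many points. That finiteness is exactly what makes a large enough rescaling possible, and it holds for every $f$, bounded or not, since only finitely many values $f(x_i)$ enter. Equivalently, every neighbourhood of $\boldsymbol{0}$ in $C_p(X)$ is absorbing.
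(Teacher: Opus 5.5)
Your proof is correct and is essentially the paper's argument. The paper skips the translation to $\boldsymbol{0}$: it places $f+\frac{1}{n}g$ in the basic neighbourhood of the interior point $f$ and cancels $f$, using that $G$ is a subgroup. It then concludes $g\in G$ from $\frac{1}{n}g\in G$, exactly as your rescaling step does.
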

	\begin{proof}
		If possible, let $int_p(G)\neq \emptyset$ and choose a $f\in  int_p(G)$. Then there exist a basic open set $B(f,x_1,x_2,\cdots,x_n)$ in $C_p(X)$ containing $f$ such that $B(f,x_1,x_2,\cdots,x_n,\epsilon)\subseteq G$ where $x_1,x_2,\cdots,x_n\in X$ and $\epsilon>0$. We also choose a $g\in C(X)\setminus G$. We can find a $n\in \mathbb{N}$ such that $\frac{1}{n}\max\{|g(x_1)|,|g(x_2)|,\cdots,|g(x_n)|\}<\epsilon$. Therefore, $f+\frac{1}{n}g\in B(f,x_1,x_2,\cdots,x_n)\subseteq G$ and hence, $\frac{1}{n}g\in G$, which implies that $g\in G$- a contradiction.
	\end{proof}
	\begin{corollary}
		Let $G$ be an additive subgroup of $C(X)$ such that $int_p(G)\neq \emptyset$ then $G=C(X)$.
	\end{corollary}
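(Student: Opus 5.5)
The claim follows directly from the preceding theorem. Suppose $G$ is an additive subgroup of $C(X)$ with $int_p(G)\neq\emptyset$. If $G$ were a proper subgroup, the preceding theorem would give $int_p(G)=\emptyset$, which is a contradiction. Hence $G=C(X)$. This contrapositive reading is the whole argument, and no further topology of $C_p(X)$ is needed.

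For a self-contained version, and to check the mechanism behind the preceding theorem, I would argue directly as follows.

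\emph{Step 1: an open neighbourhood of $\boldsymbol{0}$ inside $G$.} Fix $f\in int_p(G)$ and a basic open set $B(f,x_1,\dots,x_n,\epsilon)\subseteq G$. Since $C_p(X)$ is a topological group and $G$ is a subgroup, translating by $-f\in G$ gives $B(\boldsymbol{0},x_1,\dots,x_n,\epsilon)\subseteq G$.

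\emph{Step 2: every $g\in C(X)$ lies in $G$.} Take an arbitrary $g\in C(X)$ and choose $m\in\mathbb{N}$ with $\frac1m\max_i|g(x_i)|<\epsilon$. Then $\frac1m g\in B(\boldsymbol{0},x_1,\dots,x_n,\epsilon)\subseteq G$. Since $G$ is closed under addition, $g=m\cdot\frac1m g$ also lies in $G$. Therefore $G=C(X)$.

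There is no substantial obstacle here. The only point needing care is that division by $m$ happens in $C(X)$, using the real vector space structure, while multiplication by $m$ back into $G$ uses only the group structure. So the argument is valid for arbitrary additive subgroups, not just for subspaces.
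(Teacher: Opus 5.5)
Your proof is correct and matches the paper, which states this corollary without separate proof as the contrapositive of the preceding theorem that a proper additive subgroup of $C(X)$ has empty interior in $C_p(X)$. Your self-contained version is the same translate-and-scale argument the paper uses for that theorem. The differences are harmless: you translate to $\boldsymbol{0}$ first, and you use a separate letter $m$, which avoids the paper's reuse of $n$.
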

	\begin{corollary}
		Let $S$ be a subring of $C(X)$, then $int_p(S)$ is either $\emptyset$ or $C(X)$.
	\end{corollary}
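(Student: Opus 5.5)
The corollary asks: for a subring $S$ of $C(X)$, show that $int_p(S)$ is either $\emptyset$ or $C(X)$. Every subring is in particular an additive subgroup of $C(X)$, so the plan is to deduce this directly from the preceding theorem on proper additive subgroups, or equivalently from the corollary just before it.

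Concretely, I will argue by cases on whether $S$ is proper. If $S=C(X)$, then $S$ is open in $C_p(X)$, so $int_p(S)=S=C(X)$. If $S\neq C(X)$, then $S$ is a proper additive subgroup, since it is closed under subtraction and contains $\boldsymbol{0}$. The theorem on proper additive subgroups then gives $int_p(S)=\emptyset$.

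Equivalently, I can phrase it as follows: if $int_p(S)\neq\emptyset$, the previous corollary applied to the additive group $(S,+)$ forces $S=C(X)$, and hence $int_p(S)=C(X)$.

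There is no genuine obstacle. The only point needing care is that the subgroup results use nothing about multiplication. They rely only on closure of $G$ under addition and negation, together with the fact that $\frac1n g\in G$ implies $g=n\cdot\frac1n g\in G$. This last step is a sum of $n$ copies of $\frac1n g$, so it holds in any additive subgroup, and therefore in any subring $S$.
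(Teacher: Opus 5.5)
Your proposal is correct and matches the paper's route. The paper states this corollary with no proof, as an immediate consequence of the preceding theorem and corollary on additive subgroups. Your argument supplies exactly that deduction: a subring is an additive subgroup, so either $S=C(X)$ and $int_p(S)=C(X)$, or $S$ is proper and $int_p(S)=\emptyset$.
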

	\begin{corollary}
		For a space $X$, the following statements are equivalent:
		\begin{enumerate}
			\item $X$ is pseudocompact.
			\item $C^*(X)$ is open in $C_p(X)$.
			\item Any intermediate ring $A(X)$ is open in $C_p(X)$.
			\item $C^*(X)$ is closed in $C(X)$.
			\item Any intermediate ring $A(X)$ is closed in $C_p(X)$.
		\end{enumerate}
	\end{corollary}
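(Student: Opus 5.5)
The plan is to reduce every condition to the single algebraic statement $C^*(X)=C(X)$, which by definition is exactly pseudocompactness of $X$. I read condition (4), ``$C^*(X)$ is closed in $C(X)$'', as closedness in $C_p(X)$, consistent with the preceding corollary. Two facts proved just above do all the work.

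First, every intermediate ring $A(X)$, with $C^*(X)\subseteq A(X)\subseteq C(X)$, is dense in $C_p(X)$. This is the preceding corollary. It holds because $A(X)$ contains the constants, and it separates points: $X$ is Tychonoff, so bounded functions already separate points, and we may invoke the theorem characterising dense subrings via point separation.

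Second, for any subring $S$ of $C(X)$, $int_p(S)$ is either $\emptyset$ or $C(X)$. Indeed, a subring with nonempty interior equals $C(X)$ by the corollary on additive subgroups.

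With these facts the equivalences follow.

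$(1)\Rightarrow(2),(3),(4),(5)$: If $X$ is pseudocompact, then $C^*(X)=C(X)$. Hence every intermediate ring equals $C(X)$, and $C(X)$ is trivially both open and closed in $C_p(X)$.

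$(5)\Rightarrow(4)$ and $(3)\Rightarrow(2)$: These are immediate, since $C^*(X)$ is itself an intermediate ring.

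$(4)\Rightarrow(1)$: If $C^*(X)$ is closed, then density gives $C^*(X)=cl_p(C^*(X))=C(X)$. So every continuous function is bounded and $X$ is pseudocompact.

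$(2)\Rightarrow(1)$: If $C^*(X)$ is open, its interior is $C^*(X)\neq\emptyset$, since it contains $\boldsymbol{0}$. The interior dichotomy for subrings then forces $C^*(X)=C(X)$, which is again pseudocompactness.

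Chaining these implications, for instance $(1)\Rightarrow(5)\Rightarrow(4)\Rightarrow(1)$ and $(1)\Rightarrow(3)\Rightarrow(2)\Rightarrow(1)$, gives the full equivalence.

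The only step requiring care is the density of $C^*(X)$, specifically that it separates points. For distinct $x,y$, complete regularity gives $f\in C(X)$ with $f(x)=0$ and $f(y)=1$. Then $(f\wedge 1)\vee 0\in C^*(X)$ still separates $x$ and $y$. Everything else is a direct appeal to the earlier results, so I expect no real obstacle.
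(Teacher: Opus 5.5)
Your proof is correct and follows the route the paper intends. The paper states this corollary without proof, right after the density of intermediate rings (via the point-separation theorem) and the interior dichotomy for subrings, and you combine exactly those two facts with the identity $C^*(X)=C(X)$ characterising pseudocompactness. Your readings of (4) as closedness in $C_p(X)$ and of (3) and (5) as statements about every intermediate ring are the ones under which the corollary is true.
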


\end{document}